\newcommand{\cn}{\color{black}}
\newtheorem{theorem}{Theorem}[section]
\newtheorem{remark}[theorem]{Remark}
\newenvironment{proof}[1][Proof]{\noindent \emph{#1.} }{\hfill \
\rule{0.5em}{0.5em}}
\makeatletter\@addtoreset{equation}{section}\makeatother
\makeatletter\@addtoreset{figure}{section}\makeatother
\makeatletter\@addtoreset{table}{section}\makeatother
\newcommand{\dotprod}[2]{\langle#1,#2\rangle} 					
\newcommand{\R}{{\mathbb{R}}}       							
\newcommand{\nell}{{n_{(\ell)}}}
\newcommand{\aplus}{a_{i+\frac{1}{2}}}
\newcommand{\aminus}{a_{i-\frac{1}{2}}}
\newcommand{\yplus}{y_{i+1}}
\newcommand{\yminus}{y_{i-1}}
\newcommand{\pminus}{p_{i-1}}
\newcommand{\A}{{\cal A}}
\newcommand{\uu}{\mathbf{u}}
\newcommand{\y}{\mathbf{y}}
\newcommand{\yom}{\mathbf{y}_\varOmega}
\DeclareMathOperator{\diag}{diag}								
\let\@@citation@@=\citation
\renewcommand{\citation}[1]{\@@citation@@{#1}%
\@for\@tempa:=#1\do{\@ifundefined{cit@\@tempa}%
  {\global\@namedef{cit@\@tempa}{}}{}}%
}
\def\@lbibitem[#1]#2#3\par{%
  \@ifundefined{cit@#2}{}{\item[\@biblabel{#1}\hfill]}%
  \if@filesw
      {\let\protect\noexpand
       \immediate
       \write\@auxout{\string\bibcite{#2}{#1}}}\fi\ignorespaces
  \@ifundefined{cit@#2}{}{#3}}
\def\@bibitem#1#2\par{%
  \@ifundefined{cit@#1}{}{\item}%
  \if@filesw \immediate\write\@auxout
    {\string\bibcite{#1}{\the\value{\@listctr}}}\fi\ignorespaces
  \@ifundefined{cit@#1}{}{#2}}
\begin{document}

\title{Tensor Method for Optimal Control Problems Constrained by Fractional  3D Elliptic
Operator with Variable Coefficients}
 
\author{Britta Schmitt\thanks{University of Trier,
        FB 4 - Department of Mathematics, D-54296, Trier,
        Germany ({\tt schmittb@uni-trier.de}).}
        \and
        Boris N. Khoromskij\thanks{Max-Planck-Institute for
        Mathematics in the Sciences, Inselstr.~22-26, D-04103 Leipzig,
        Germany ({\tt bokh@mis.mpg.de}); University of Trier,
        FB 4 - Department of Mathematics, D-54296, Trier.} 
        \and
        Venera Khoromskaia\thanks{Max Planck Institute for
        Mathematics in the Sciences, Leipzig, Germany ({\tt vekh@mis.mpg.de}).}
        \and        
        Volker Schulz\thanks{University of Trier,
        FB 4 - Department of Mathematics, D-54296, Trier,
        Germany ({\tt volker.schulz@uni-trier.de}).}
        }

\date{}

\maketitle

\begin{abstract}
We introduce the tensor numerical method for solving optimal control problems that are 
constrained by fractional  2D and 3D elliptic operators 
with variable coefficients. We solve the governing equation for the control function 
which includes a sum of the fractional operator and its inverse, both discretized 
over large 3D $n\times n \times n$ spacial grids. 
 Using the diagonalization of the arising matrix valued functions in the eigenbasis of the 1D Sturm-Liouville 
operators, we construct the rank-structured tensor approximation with controllable precision 
for the discretized fractional elliptic operators and the respective preconditioner. 
The right-hand side in the constraining equation (the optimal design function) is supposed to be represented 
in a form of a low-rank canonical tensor. Then the equation for the control function is solved  
in a tensor structured format by 
using preconditioned CG iteration with the adaptive rank truncation procedure that also ensures the accuracy 
of calculations, given an $\varepsilon$-threshold. 
This method reduces the numerical cost for solving the control problem to $O(n \log n)$ 
(plus the quadratic term $O(n^2)$ with a small weight), which is superior to the 
approaches based on the traditional linear algebra tools that yield at least $O(n^3 \log n)$ 
complexity in the 3D case. 
The storage for the representation of all 3D nonlocal operators and functions involved 
is  also estimated by $O(n \log n)$.  
This essentially outperforms the  
traditional methods operating with  fully populated  $n^3 \times n^3$ matrices and vectors 
in $\mathbb{R}^{n^3}$. 
Numerical tests for 2D/3D control problems indicate the almost linear complexity  
scaling of the rank truncated PCG iteration in the univariate grid size $n$.
\end{abstract}

\noindent\emph{Key words:} Fractional elliptic operators, low-rank tensor approximations,
optimal control problems, Tucker and canonical tensor formats.

\noindent\emph{AMS Subject Classification:} 65F30, 65F50, 65N35, 65F10


\section{Introduction}\label{Int:SH} 
 
 Optimization problems that are constrained by partial differential equations (PDEs) have a long 
  history in mathematical literature since they allow a huge number of applications  in 
  different fields of natural science,  see   
    \cite{Troeltzsch:2005,Allaire:07,HerKun:2010} for some comprehensive examples. 
  Being studied for many years, 
  tracking-type problems which trace the discrepancy between the solution of the PDE and a given 
  target state represent a very important class of optimal control problems \cite{Reyes:2015}. 
  In such problems the discretization and numerical treatments of the elliptic PDE in constraints that 
  determines the relation between the optimal design and control functions,
  $$\mathcal{L} y = u,
  $$
  can be performed by the traditional FEM methods dealing with sparse matrices.
  Multigrid methods for elliptic equations are shown to be efficient since their 
computational complexity is linear in the number of grid points in the computational 
domain in $\mathbb{R}^d$, see \cite{BoSch:2009,BoSch:2012}.
  
  The construction of suitable discretization and solution schemes become a challenging task
  whenever the operator $\mathcal{L}$ in the constraint equation inherits nonlocal structures, 
  such as for instance in the case of a fractional differential operator, that is $\mathcal{L}^\alpha$. 
     In recent years, considering fractional PDEs  
  is gaining more attention due to an higher accuracy within the numerical simulation of the real 
  world problems, for example in the subject areas concerning heat diffusion in special 
  materials \cite{BenAbdallah:2013}, image processing \cite{GilOsher:2008}, material 
  science \cite{Bates:2006}, optimization \cite{Duvaut:1976}. For more application areas 
  see \cite{AntilOtarola:2015} and references therein. 
  
  However, as the pay-off for higher modeling accuracy in 
  applications, fractional operators in PDEs also imply nonlocality to the given equation, 
  which after discretization leads to dense problem structures resulting in quadratic complexity 
  in the number of degrees of freedom in $\R^d$, so that they are hard to handle especially when 
  large grids are considered in many dimensions. As a result, common numerical solution approaches lead to severe 
  problems, such that a number of special techniques has been advocated 
  \cite{HaHighTrefeth:08,Vabich:2015,DuLazPas:18,HLMMV:2018,Schwab:18}.
  
    Furthermore, the possible presence of variable coefficients within the PDE has a 
  huge impact on the numerical complexity of a suitable solution technique. 
  In this case, assuming separability for the involved 
  coefficients in  $\R^d$ leads to favorable, highly structured matrices after discretization that 
  allow using  efficient numerical representations such as Kronecker product structures 
  \cite{KhorCA:09,KreTob:2010, KreTob:2011,DolOs:2011} and the respective multilinear algebra.

  In this article, we consider a tracking-type optimal control problem constrained by 
  a fractional Laplace type elliptic operator with variable, separable coefficients 
   discretized on a tensor grid.   
  For an overview about several characterizations of the fractional elliptic operators 
  and the respective algebra
  see \cite{GHK:05,Higham_MatrFunc:08,HaHighTrefeth:08,Kwasn:17,FracLapl:2018}.
   A number of application fields motivating the use of fractional power of elliptic operators,
 for example in biophysics, mechanics, nonlocal electrostatics and image processing 
 have been discussed in the literature
 \cite{AtPSZo:14,AntilOtarola:2015,DPSS:2014,Vabich:2015,DuLazPas:18,HLMMV:2018,KaMelenk:19}. 
 In such applications control problems arise naturally.

   An application of standard numerical methods for the solution of PDEs in $\mathbb{R}^d$ 
  is essentially limited by 
  the so-called \textit{curse of dimensionality} \cite{bellman-dyn-program-1957},   
  that is the effect of an exponential growth of storage and computational complexity, $O(n^d)$, 
  in the dimension of the problem $d$, where $n$ is the univariate grid size of the discretization.   
   This phenomenon effects   all  basic procedures such as matrix-vector calculus 
  and  full format matrix arithmetics.  
    Some special numerical techniques like adaptive $\mbox{h-p}$ mesh refinement, 
    sparse grids \cite{BunGri:2004}, 
   hierarchical matrices \cite{Hack_Book:09} and fast multipole methods \cite{RochGreen:87}
    only partially relax the curse of dimensionality.

The modern tensor numerical   methods, based on low-rank separable approximation of 
   operators and functions in $\mathbb{R}^d$,
  are capable to reduce the numerical cost in higher dimensions to the linear scaling in $d$,
  $O(dn)$, thus making possible the efficient numerical modeling in higher dimensions.
  Meanwhile, tensor decompositions using canonical and Tucker tensor formats 
  and the respective multilinear algebra techniques have been used since long in the computer science
  for data analysis and signal processing \cite{Tuck:1966,DMV-SIAM2:00,Cichocki:2002,KoldaB:07}. 
  The Tucker approximation tools are based on the principal generalization of the singular value 
  decomposition (SVD) called the higher order SVD (HOSVD) \cite{DMV-SIAM2:00}.
  However, the analytical methods of separable low-rank representation of multivariate 
  functions and operators \cite{Stenger,HaKhtens:04I,GHK:05,Khor1:06}
  appeared to be the main prerequisite to tensor numerical calculus.
  Tensor numerical methods in scientific computing have been first introduced for calculation of the
  3D convolution integral operators with Green's kernels in quantum chemistry, 
  see  \cite{ Khor2Book2:18} for the detailed discussion.
  
  In the recent decade tensor formats created specifically for the solution of multidimensional 
  problems have been introduced in the form of tensor train  
  and quantized tensor train representations, hierarchical tensors,   
  and the recent range-separated tensor decomposition.  
  Finally, the main feature of the tensor numerical techniques in scientific computing
   is the  arrangement  and usage of separable data structures in order to reduce the solution of given 
multidimensional equations to essentially one-dimensional operations, 
see \cite{Hack_Book12,KhorBook:18,Khor2Book2:18} for the detailed discussion and comprehensive references.

In the present paper, we proceed with the development of efficient tensor numerical techniques for the solution of 
optimal control problems  which were initiated in \cite{HKKS:18} for solving  
control problem with 2D and 3D classical  fractional Laplace operator in constraints.
In the latter case one could use the FFT-based diagonalization of the fractional 
Laplacian to construct the rank-structured representation of the governing nonlocal operator.
 In our problem setting we essentially generalize the previous approach to 
  a more general class of tracking-type optimal control problems in $\mathbb{R}^d$, $d=2,3$,
constrained by a PDE containing a fractional divergence type elliptic operator 
with variable coefficients. 

Since the FFT-based factorizations may no longer be applied, 
we use the diagonalization of the arising fully populated matrix valued functions 
in the eigen-basis of the 1D Sturm-Liouville operators. 
The diagonal of the coefficients matrix in the factorized 
representation of the resulting governing operator (in the product of 1D eigen-bases)
is reshaped to a third-order tensor which undergoes the rank-structured decomposition 
by using the multigrid Tucker-to-canonical tensor transform  \cite{KhKh3:08}.  
The spectrally equivalent preconditioner with small Kronecker rank ($K$-rank) is constructed by using the 
 decomposition of the fractional anisotropic Laplace operator in the Fourier basis, 
 and by subsequent low-rank approximation via the multigrid Tucker algorithm.
Then the discrete linear system is solved by the  preconditioned conjugate gradient (PCG) iteration 
with adaptive rank truncation adapting the 
rank-structured tensor representation of all involved quantities: the governing operator, 
the preconditioner, the right hand side as well the solution vector. 
The adaptive rank reduction for the solution vector in the course of PCG iteration 
is performed  by using the canonical-to-Tucker decomposition via the reduced HOSVD (RHOSVD), 
see \cite{KhKh3:08,Khor2Book2:18}. The numerical accuracy is controlled by the 
given $\varepsilon$-threshold in the rank reduction procedure of the algorithm RHOSVD.

%

Finally, we apply the above methods to solve a tracking-type optimal control problem 
constrained by a PDE with a fractional divergence type elliptic operator 
with variable coefficients  discretized on $n\times n \times n$ Cartesian grid in $\mathbb{R}^3$. 
Our approach exhibits the linear complexity scaling in dimension, $O(d n^2)$.  
The theoretical justification for the use of aforementioned tensor approximations 
  can be found  in  \cite{HaKhtens:04I,GHK:05,KhorBook:18}, see also   
    \cite{HKKS:18} where the classical  fractional Laplace operator $(-\Delta)^\alpha$ in constraints 
was discussed and analyzed.
   
  The rest of the paper is structured as follows. Section \ref{Int2:SH} describes the target 
class of optimal control problems and the special definitions that come with 
a separable structure of the involved fractional elliptic operator. 
In section \ref{sec:Larrange_eqn} the discrete optimality equation system is 
derived with the help of the Lagrangian multiplier approach. Afterwards, the discretization 
of the control problem is developed and the Laplace-type operator with variable coefficients is 
decomposed with the help of factorized low-rank structures. 
We also recapitulate the low-rank structures used for the preconditioner. 
The presented techniques are then used in a special tailored  
PCG algorithm  with adaptive rank truncation.   
Finally, in section \ref{sec:NumTests} we present 
and analyze our numerical results for 2D and 3D examples and discuss aspects 
related to computation and storage complexities when solving the considered  
class of optimal control problems by using tensor numerical method combined with
the introduced PCG iterative algorithm.

 \section{Problem setting } \label{Int2:SH}  
 
 Our goal is the construction of fast tensor numerical solution schemes for 
solving the control problems
constrained by fractional $d$-dimensional elliptic operators with variable coefficients. 
For this reason we restrict ourself to the case of rectangular domains and to the class
of elliptic operators with diagonal separable coefficients. 

Given the design function $y_\Omega \in L^2(\Omega)$ on $\Omega :=(0,1)^d$, $d=1,2,3$,
first, we consider the optimization problem for the cost functional
\begin{equation} \label{eqn:cost_func}
 \min_{y,u} J(y,u):=\int_\Omega (y(x) -y_\Omega(x))^2\, dx + \frac{\gamma}{2} 
 \int_\Omega u^2(x) \,dx, 
\end{equation}
constrained by the elliptic boundary value problem  
in $\Omega$ for the state variable  $y \in H_0^1(\Omega)$,
\begin{equation} \label{eqn:basic_setting}
  {\cal A} y  := -\nabla^T \cdot \mathbb{A}(x)\nabla y = \beta u, \quad x\in \Omega,\;
  u\in L_2(\Omega),\quad \beta >0,
  \end{equation}
endorsed with the homogeneous Dirichlet (or periodic) boundary conditions 
on $\Gamma = \partial \Omega $, i.e., $y_{|\Gamma}=0$.
The coefficient matrix $\mathbb{A}(x)\in \mathbb{R}^{d\times d}$ is supposed to 
be symmetric, positive definite and uniformly bounded in $\Omega $ 
with positive constants $c>0$ and $C>0$, i.e.,
\[
 c\, I_{d\times d}   \leq \mathbb{A}(x)\leq C\, I_{d\times d}.
\]
Under above assumptions the associated bilinear form
\[
 A(u,v)=\int_\Omega \mathbb{A}(x)\nabla u(x) \cdot \nabla v(x) \, dx
\]
defined on $V\times V $, $V:=\{v\in H_0^1(\Omega) \}$ is symmetric, coercive and bounded on $V$
with the same constants $c$ and $C$. 

In what follows, we consider for $0< \alpha \leq 1$ the control problems constrained by the fractional 
elliptic operator 
\begin{equation}
\label{eqn:frac_setting}
 {\cal A}^\alpha y= \beta u, \quad u\in L_2(\Omega),\quad \beta >0
\end{equation}
with the spectral definition of the fractional power of the elliptic operator ${\cal A}$ by
\begin{equation*} \label{eqn:frac_Oper}
 {\cal A}^\alpha y(x) = \sum_{i=1}^\infty \lambda_i^\alpha c_i \psi_i(x), \quad 
 y=\sum_{i=1}^\infty c_i \psi_i(x),
\end{equation*}
where $\{\psi_i(x)\}_{i=1}^\infty$ is the set of $L_2$-orthogonal eigenfunctions
of the symmetric, positive definite operator ${\cal A}$, while $\{\lambda_i\}_{i=1}^\infty$ are 
the corresponding (real and positive) eigenvalues. 

The numerical efficiency of this approach is based on the exponentially fast 
convergence of the sinc quadratures on a class of analytic functions. 
This technique is to be applied in the present paper for the theoretical 
analysis of the rank decomposition schemes, for the description of their
constructive representation and for the design of spectrally close preconditioners 
for the governing equations based on the use of fractional Laplacian.

The linear constraint equation (\ref{eqn:frac_setting}) allows to eliminate the state 
variable and then to derive the Lagrange 
equation for the control $u$ in the explicit form as follows 
(see \S\ref{sec:Larrange_eqn} concerning the Lagrange equations)
\begin{equation} \label{eqn:Lagrange_cont2}
   {\cal F}(\mathcal{A}) u :=   \big(\beta\mathcal{A}^{-\alpha} + 
 \tfrac{\gamma}{\beta}\mathcal{A}^{\alpha}\big) u = y_\varOmega,
\end{equation}
for some positive constants $\beta>0$ and $\gamma>0$. The equation  
for the state variable reads
\begin{equation} \label{eqn:State2}
      y=  \beta\mathcal{A}^{-\alpha} u.
\end{equation}
The practically interesting range of parameters includes the case $\beta=O(1)$ for small 
values of $\gamma>0$.

The presented numerical scheme is focused on the solution
of discrete versions of equations (\ref{eqn:Lagrange_cont2}) and (\ref{eqn:State2}) that include a nonlocal 
elliptic operator $\mathcal{A}^{\alpha}$ and its inverse $\mathcal{A}^{-\alpha}$. 
The efficiency of the rank-structured tensor approximations presupposes 
that the design function on the right-hand side of these equations,
$y_\varOmega(x_1,x_2, x_3)$, allows the low-rank separable approximation.

Since we aim for the low-rank (approximate) tensor representation of all functions and 
operators involved in the above formulation of the control problem, 
we further assume that the equation coefficients matrix takes a diagonal form
\begin{equation} \label{eqn:DiagCoef2}
 \mathbb{A}(x) = \mbox{diag}\{a_1(x_1),a_2(x_2) \}, \quad a_\ell(x_\ell) >0, 
 \quad \ell=1,2,
\end{equation} 
in 2D case and similar for $d=3$,
\begin{equation} \label{eqn:DiagCoef3}
 \mathbb{A}(x) = \mbox{diag}\{a_1(x_1),a_2(x_2),a_3(x_3) \}, \quad a_\ell(x_\ell) >0, 
 \quad \ell=1,2,3.
\end{equation}
Hence, we consider the optimal control problem constrained by fractional elliptic operator with 
separable coefficients, such that the equation \eqref{eqn:frac_setting} takes form
\begin{equation} \label{eqn:CoefClass}
 {\cal A}^\alpha  y =
 \left(-\sum_{\ell=1}^d \frac{\partial}{d x_\ell} a_\ell(x_\ell) \frac{\partial}{d x_\ell}
 \right)^\alpha y = \beta u, 
 \quad  0\leq \alpha\leq 1.
\end{equation}
Notice that the efficient tensor numerical method for the case of fractional Laplace operator
in constraint, i.e., 
$a_\ell(x_\ell)=1$, $\ell=1,2,3$, was developed in \cite{HKKS:18}.

For a rank structured representation of the elliptic operator inverse (and some other operator 
valued functions $f({\cal A})$) one can apply 
the integral representation based on the Laplace type transform \cite{HaHighTrefeth:08},
\begin{equation} \label{eqn:Laplace_transf}
 {\cal A}^{-\alpha}=\frac{1}{\Gamma(\alpha)} \int_0^\infty t^{\alpha -1} e^{-t {\cal A}}\, dt, 
 \end{equation} 
which suggests the numerical schemes for low-rank canonical tensor representation of the 
operator (matrix) ${\cal A}^{-\alpha}$ by using the
sinc quadrature approximations for the integral on the real axis \cite{GHK:05},
\begin{equation} \label{eqn:sinc_Laplace_tr}
  \int_0^\infty t^{\alpha -1} e^{-t {\cal A}}\, dt \approx 
 \sum\limits_{k=-M}^M \hat{c}_k t_k^{\alpha -1} e^{-t_k {\cal A}}= \sum\limits_{k=-M}^M c_k 
\bigotimes_{\ell=1}^d e^{-t_k {A_\ell}},
\end{equation}
applied to the operators composed by a sum of commutable terms, 
\begin{equation} \label{eqn:Commut_term}
{\cal A}=\sum_{\ell=1}^d A_\ell, \quad [A_\ell,A_k]=0,\quad \mbox{for all} \quad \ell,k=1,\ldots,d,
\end{equation}
which ensures that each summand in (\ref{eqn:sinc_Laplace_tr}) is separable, i.e.
$e^{-t_k {\cal A}}=\bigotimes_{\ell=1}^d e^{-t_k {A_\ell}}$.
For example, in the case of the target class of operators in (\ref{eqn:CoefClass}),   (\ref{eqn:Commut_term}),  
we obtain the $d$-term decomposition
\[
 {\cal A}   = -\sum_{\ell=1}^d \frac{\partial}{d x_\ell} a_\ell(x_\ell) \frac{\partial}{d x_\ell}
\]
with \emph{commutable} 1D operators  $A_\ell=-\frac{\partial}{d x_\ell} a_\ell(x_\ell) \frac{\partial}{d x_\ell}$,
for $\ell=1,\ldots, d$.  

  In this paper we consider the discrete matrix formulation of the  optimal control problem 
  (\ref{eqn:cost_func}) constrained by (\ref{eqn:basic_setting})
 based on a FEM/FDM discretization ${A}_h$ of a $d$-dimensional elliptic operator $\mathcal{A}$ 
 defined on the uniform $n_1\times n_2 \times \ldots \times n_d  $ tensor grid in $\Omega$, 
 where $h=h_\ell=1/n_\ell$ is the univariate mesh parameter.
 The $L_2$ scalar product 
 is substituted by the Euclidean scalar product $(\cdot,\cdot)$ of multi-indexed vectors 
  in $\mathbb{R}^{\bf n}$, ${\bf n}=(n_1, n_2, \ldots, n_d)$.
 
%
 
 The fractional elliptic operators ${\cal A}^{\pm\alpha}$ are approximated
  by their FEM/FDM representation $({A}_h)^{\pm\alpha}$, where the matrix $({ A}_h)^{\pm \alpha}$ is defined
  in terms pf spectral decomposition of ${A}_h$.

 \section{Rank-structured representation of operator valued functions of interest}
\label{sec:Larrange_eqn}

The numerical treatment of minimization problem \eqref{eqn:cost_func} with constraint 
\eqref{eqn:frac_setting} requires
solving the corresponding Lagrange equation for the necessary optimality conditions. 
In this section, we will derive these conditions
based on a discretize-then-optimize-approach. Afterwards, we will discuss how the involved discretized
operators can be applied efficiently in a low-rank format, and how this can be used to design
a preconditioned conjugate gradient (PCG) scheme for the necessary optimality conditions.

\subsection{Discrete optimality conditions}\label{ssec:Optim_cond}

We present the necessary first order conditions based on their derivation in \cite{HKKS:18}.\\
We consider a version of the control problem \eqref{eqn:cost_func} constrained by \eqref{eqn:frac_setting}, 
discretized on a uniform grid in each dimension, 
\begin{align*}
	\min_{\mathbf{y},\mathbf{u}} =  \ &\frac{1}{2}(\mathbf{y} - \mathbf{y}_\Omega)^TM(\mathbf{y} 
	- \mathbf{y}_\Omega)  + \frac{\gamma}{2} \mathbf{u}^T M \mathbf{u}\\
	\text{s.\,t.}\ ~ A^\alpha \mathbf{y} = \ & \beta M\mathbf{u},
\end{align*}
where vectors $\y, \y_\Omega, \uu \in \R^N$ denote the discretized state $y$, design $y_\Omega$ 
and control $u$, respectively. The discretization 
of the elliptic operator $\mathcal{A}$ by finite elements or finite differences is denoted by $A = A_h$. 
 The matrix $M$ will be a mass matrix in the finite element case and simply the 
identity matrix in the finite difference case. 

Setting up the Lagrangian function with the help of the discrete adjoint variable $\mathbf{p}$, 
\begin{equation*}
	L(\mathbf{y},\mathbf{u},\mathbf{p}) = \frac{1}{2}(\mathbf{y} - 
	\mathbf{y}_\Omega)^T M(\mathbf{y} - \mathbf{y}_\Omega)  
	+ \frac{\gamma}{2} \mathbf{u}^T M \mathbf{u} + \mathbf{p}^T(A^\alpha \mathbf{y} - 
	\beta M \mathbf{u}),
\end{equation*}
and deriving it with respect to all three variables, we end up with the equation system
\begin{equation*}
	\begin{bmatrix}
		M & O &  A^\alpha\\
		O & \gamma M & -\beta M\\
		A^\alpha & -\beta M & O
	\end{bmatrix}
	\begin{pmatrix}
		\mathbf{y}\\
		\mathbf{u}\\
		\mathbf{p}\\
	\end{pmatrix} =
	\begin{pmatrix}
		\mathbf{y}_\varOmega\\
		\mathbf{0}\\
		\mathbf{0}\\
	\end{pmatrix} \
	\begin{matrix}
	\mathrm{(I)} \\
	\mathrm{(II)} \\
	\mathrm{(III)} \\
	\end{matrix}.
\end{equation*}
The system contains the necessary first order optimality conditions belonging to the discussed 
optimal control problem.

Then the state equation $\mathrm{(III)}$ can be solved for $\y$, yielding
\begin{equation}\label{eqn:State}
	\mathbf{y} = \beta A ^{-\alpha}\mathbf{u}.
\end{equation}
Subsequently to solving $\mathrm{(II)}$ for $\mathbf{p}$, the adjoint equation $\mathrm{(I)}$ 
eventually provides an equation for the control $\mathbf{u}$, that is
\begin{equation} \label{eqn:Lagrange_cont}
\big(\beta A^{-\alpha} + \tfrac{\gamma}{\beta}A^\alpha\big) \mathbf{u}  = \mathbf{y}_\varOmega.
\end{equation}

In what follows, we describe how to derive the discretization of the operator $\A$ and afterwards 
present the efficient tensor based numerical method for solving equation \eqref{eqn:Lagrange_cont} and related ones.

\subsection{Finite difference scheme}\label{ssec:FDM}

First, we derive a factorized representation of the discrete elliptic operator $A$
and the related matrix valued functions of $A$, which are compatible with low-rank data.
Let $I_{(\ell)}$ denote the identity matrix, and $A_{(\ell)}$
the discretized one-dimensional Sturm-Liouville operators on the given grid in the $\ell$th mode, then we have
\begin{equation} \label{eqn_low_rank_lap}
    A \equiv A_1 \oplus A_2 \oplus A_3= A_{(1)} \otimes I_2 \otimes I_3 + I_1 \otimes A_{(2)} \otimes I_3 +
    I_1 \otimes I_2 \otimes A_{(3)},
\end{equation}
 where $\otimes$ denotes the Kronecker product of matrices.  

To obtain the symmetric three-diagonal matrices $A_{(\ell)}$, $\ell=1,2,3$, we simply discretize the one-dimensional
Sturm-Liouville eigenvalue problems
\begin{align}\label{eqn:eig1D}
    -\frac{\partial}{d x_\ell} a_\ell(x_\ell) \frac{\partial}{d x_\ell}   y &= \lambda y, \\
     y(0) &= y(1) =0. \nonumber
\end{align}
Therefore, we use the corresponding weak formulation
\begin{equation} \label{eqn:weak1D} \int_0^1 -a_\ell(x_\ell) p'(x_\ell)y'(x_\ell) \text{d} x_\ell = 
\int_0^1 - \lambda y(x_\ell) p(x_\ell) \text{d}x_\ell
 \end{equation} for $\Omega = ]0, 1[$ and 
 $p(x_\ell) \in C^1(]0,1[) \cap C([0,1]),\ p(0) = 0 = p(1)$, \ $\ell = 1, 2$.\\
 For the sake of simplicity we set $a := a_\ell$ and consider a uniform grid with grid size 
 $h := h_\ell$. For discretization, we use the finite difference approximations
 \begin{gather*}
p_i' = \frac{p_i - p_{i-1}}{h} \qquad \text{and} \qquad y_i' = \frac{y_i - y_{i-1}}{h}
\end{gather*}
as well as the evaluation $a_{i \pm \frac{1}{2}}$ of the equation coefficients $a_\ell (x_\ell)$ 
at the middle point of two grid points.

 Finally, we end up with the following discretization for above weak formulation \eqref{eqn:weak1D}. 
Considering the left side of \eqref{eqn:weak1D}, the discretization reads
\begin{alignat}{1}
& \qquad \qquad \int_0^1 -a_\ell(x_\ell) p'(x_\ell)y'(x_\ell) \text{d} x_\ell\nonumber \\ &= \qquad -h \sum_{i=2}^\nell \aminus \frac{p_i-p_{i-1}}{h}  \frac{y_i - y_{i-1}}{h} \nonumber\\
&= \qquad -h \sum_{i=2}^\nell \aminus \frac{y_i - \yminus}{h^2}(p_i - \pminus) \quad \nonumber  \\
&= \qquad -h \sum_{i=2}^\nell \aminus \frac{y_i - \yminus}{h^2} p_i + h \sum_{i = 1}^{\nell -1} \aplus \frac{\yplus - y_i}{h^2} p_i \nonumber \\
&  = \ h a_{1+\frac{1}{2}} \frac{y_2 - y_1}{h^2} p_1 - h \sum_{i=2}^{\nell -1} \left( \aminus \frac{y_i - \yminus}{h^2} - \aplus \frac{\yplus - y_i}{h^2} \right) p_i \nonumber  - h a_{\nell-\frac{1}{2}} \frac{y_\nell - y_{\nell -1}}{h^2} p_\nell,
\end{alignat}
whereas for the right side of \eqref{eqn:weak1D}, it holds that
$$
\int_0^1 - \lambda y(x_\ell) p(x_\ell) \text{d}x_\ell \ = \ -\lambda h \sum_{i=2}^{\nell-1} y_i p_i - h \frac{y_1p_1}{2} - h \frac{y_\nell p_\nell}{2}. 
$$


Ultimately, the full discretization scheme for the one-dimensional eigenvalue problem \eqref{eqn:eig1D} is 
 \cn
\begin{equation*}
\underbrace{-\frac{1}{h_\ell^2}
 \begin{bmatrix}
    a_{1,1}   & a_{1,2}      &       &       \\
    a_{2,1}   & a_{2,2}     & a_{2,3}     &  \\
    & \ddots & \ddots &   a_{n_\ell -1,n_\ell}     \\
    &        &      a_{n_\ell,n_\ell -1 }  & a_{n_\ell,n_\ell}
 \end{bmatrix} }_{\substack{= A_\ell}}
 \begin{pmatrix}
 y_1 \\  \\ \vdots \\  \\ y_{n_\ell}
\end{pmatrix}
=
\lambda
\begin{pmatrix}
y_1 \\ \\ \vdots  \\ \\ y_{n_\ell}
\end{pmatrix}.
 \end{equation*}

 with the entries of the three-diagonal matrix $A=[a_{i,j} ]$ given by (we skip the subindex $\ell$)

 {\begin{center}
 $a_{i,j} = \begin{cases}
 \aplus & j-i = 1\\
 \aminus & j-i = -1\\
 -\aplus - \aminus & i = j,\quad i \not\in \{1, \nell\} \\
 -2\aplus & i = j = 1\\
 -2\aminus & i = j = \nell.
 \end{cases}$\end{center}}

\subsection{Stiffness matrix in the low-rank Kronecker form}
\label{ssec:StiffnessMatrixinLR}

Again, consider the Laplace-type operator $A$ in discretized format \eqref{eqn_low_rank_lap}, 
that is $$ A = A_{(1)} \otimes I_2 \otimes I_3 + I_1 \otimes A_{(2)} \otimes I_3 +
    I_1 \otimes I_2 \otimes A_{(3)}.$$
    
Let $G_\ell \in \mathbb{R}^{n_\ell \times n_\ell}$ be the orthogonal matrix composed of the eigenvectors
of the problem
\begin{equation}\label{eqn:eigs_1D}
 A_{(\ell)} g_i = \lambda_i^{(\ell)} g_i, \quad g_i \in \mathbb{R}^{n_\ell}, \; i \in \{1,\dots,n_\ell  \}.
\end{equation}
Then the one-dimensional operator (matrix) $A_{(\ell)}$ admits an eigenvalue decomposition in its eigenbasis,
\begin{equation*}
    A_{(\ell)} = G_\ell^T {\varLambda}_{(\ell)} G_\ell, \quad
{\varLambda}_{(\ell)}=\mbox{diag}\{\lambda^{(\ell)}_1,\dots,\lambda^{(\ell)}_{\nell}\}, \quad \ell = 1,2,3
\end{equation*}
with $G_\ell$ consisting of the (column) eigenvectors $g_i \in \mathbb{R}^{n_\ell}$. 

Following \cite{HKKS:18} and using the properties of the Kronecker product, we can write the first 
summand in \eqref{eqn_low_rank_lap} as
\begin{equation*}
\begin{split}
A_{(1)} \otimes I_2 \otimes I_3 &= (G_1^T {\varLambda}_{(1)} G_1) \otimes (G_2^T I_2 G_2) 
\otimes (G_3^T I_3 G_3) \\
&= (G_1^T\otimes G_2^T \otimes G_3^T) ({\varLambda}_{(1)}\otimes I_2\otimes I_3) (G_1\otimes G_2 \otimes G_3),
\end{split}
\end{equation*}
and similarly for matrices $A_{(2)}$ and $A_{(3)}$. Eventually, this suggests the following rank-$3$ Kronecker 
representation of the full stiffness matrix in \eqref{eqn_low_rank_lap} as
\begin{equation}\label{eqn:DiagGen}
A = 
(G_1^T\otimes G_2^T\otimes G_3^T) 
	\big[ \varLambda_1\otimes I_2\otimes I_3 + I_1\otimes \varLambda_2\otimes I_3 + 
	I_1\otimes I_2\otimes \varLambda_3 \big]
	(G_1\otimes G_2 \otimes G_3).
\end{equation}

Considering $d=2$, expression (\ref{eqn:DiagGen}) simplifies to
\begin{equation*}\label{eqn:DiagGen2D}
\begin{split}
A &=  (G_1^T\otimes G_2^T) (\varLambda_1\otimes I_2) (G_1\otimes G_2)
	+(G_1^T\otimes G_2^T) (I_1\otimes \varLambda_2) (G_1\otimes G_2)\\
	&= (G_1^T\otimes G_2^T) 
	\underbrace{\big( \varLambda_1\otimes I_2 + I_1\otimes \varLambda_2 \big)}_{\eqqcolon \varLambda}
	(G_1\otimes G_2 ),
\end{split}
\end{equation*}
which provides the eigenvalue decomposition for any matrix valued function ${\cal F}(A)$,
\begin{equation}\label{eqn:DiagFGen2D}
\mathcal{F}(A)  = (G_1^T\otimes G_2^T) 
	\mathcal{F}(\varLambda) (G_1\otimes G_2 ).
\end{equation}

Then the eigenvalue decompositions  (\ref{eqn:DiagGen}) and (\ref{eqn:DiagFGen2D}) provide the 
efficiently computation of some matrix valued functions ${\cal F}(A)$ at the low cost of the 
order of $O(d\,n^2)$. \\

Again following \cite{HKKS:18}, let us assume that $\mathcal{F}(A)$ can be expressed approximately 
by a linear combination of Kronecker rank-$1$ operators, so that due to \eqref{eqn:DiagFGen2D}, 
for the approximation of $\mathcal{F}(A)$ it is sufficient to approximate the 
diagonal matrix $\mathcal{F}(\varLambda)$.
Thus, we assume the decomposition
\begin{equation} \label{eqn:CanFormat}
	\mathcal{F}(\varLambda) \approx \sum_{k=1}^R \diag \big(\mathbf{u}_1^{(k)} \otimes \mathbf{u}_2^{(k)}\big),
\end{equation}
with vectors $\mathbf{u}_{(\ell)}^{(k)}\in\R^{n_{(\ell)}}$ and $R \ll \min(n_1,n_2)$, and let
$\mathbf{x}\in \R^N$ be a vector given in a low-rank format, i.\,e.
\begin{equation*}
	\mathbf{x} \approx \sum_{j=1}^S \mathbf{x}_1^{(j)} \otimes \mathbf{x}_2^{(j)},
\end{equation*}
with vectors $\mathbf{x}_{(\ell)}^{(j)}\in\R^{n_{(\ell)}}$ and ${S \ll \min(n_1,n_2)}$. \\
Then, we can compute a matrix-vector product
\begin{equation}\label{eqn:Gen2DtimesX}
\begin{split}
	\mathcal{F}(A) \mathbf{x} &\approx (G_1^T\otimes G_2^T) 
	\bigg( \sum_{k=1}^R \diag \big(\mathbf{u}_1^{(k)} \otimes \mathbf{u}_2^{(k)}\big) \bigg)
	(G_1\otimes G_2 )
	\bigg( \sum_{j=1}^S \mathbf{x}_1^{(j)} \otimes \mathbf{x}_2^{(j)} \bigg)\\
	&= \sum_{k=1}^R \sum_{j=1}^S G_1^T \big( \mathbf{u}_1^{(k)} \odot G_1 \mathbf{x}_1^{(j)} \big) \otimes
	G_2^T\big( \mathbf{u}_2^{(k)} \odot G_2 \mathbf{x}_2^{(j)} \big),
\end{split}
\end{equation}
where $\odot$ denotes the componentwise (Hadamard) product of vectors.

Expression \eqref{eqn:Gen2DtimesX} can be calculated in factored 
form in $\mathcal{O}(RSn^2 )$ flops, where ${n = \max(n_1,n_2)}$.

Considering $d=3$, with completely analogous reasoning, equation \eqref{eqn:Gen2DtimesX} becomes
\begin{equation}\label{eqn:Gen3DtimesX}
	\mathcal{F}(A) \mathbf{x} \approx 
	\sum_{k=1}^R \sum_{j=1}^S G_1^T \big( \mathbf{u}_1^{(k)} \odot G_1 \mathbf{x}_1^{(j)} \big) 
	\otimes
	G_2^T\big( \mathbf{u}_2^{(k)} \odot G_2 \mathbf{x}_2^{(j)} \big)
	\otimes G_3^T\big( \mathbf{u}_3^{(k)} \odot G_3 \mathbf{x}_3^{(j)} \big),
\end{equation}
which can be implemented with the same asymptotic cost as in 2D case, i.e., 
in $\mathcal{O}(RSn^2 )$ operations.

\begin{remark}\label{rem:cost1Ddiagonal}
In the $d$-dimensional case we arrive at the 
linear scaling in $d$ of numerical cost, $\mathcal{O}(dRSn^2 )$, 
such that the effect of quadratic scaling in the univariate grid size $n$ becomes negligible 
in comparison with the gain from getting rid of the curse of dimensionality.
Moreover, the cubic cost of solving the eigenvalue  problem for $n\times n$ three-diagonal
matrix can be considered as negligible in the practically interesting range of grid-size $n$
until several thousand, since it only takes few seconds even for rather large matrices, 
see also section \ref{ssec:numerics_tensor_2D} for more details. 
\end{remark}

It is worth to note  that in the case of   non-structured (full format) long   vectors 
$\mathbf{x}\in \mathbb{R}^{n^3}$ we have $S=n^2$ which increases the cost of matrix-vector 
multiplication up to $\mathcal{O}(Rn^4 )$.

In our applications we are interested in the low Kronecker rank ($K$-rank) representations 
(approximations) of the matrix valued functions
\[
 \mathcal{F}_1(A)= A^\alpha, \quad    \mathcal{F}_2(A)= A^\alpha + A^{-\alpha},  
 \quad \mbox{and} \quad  \mathcal{F}_3(A)= (A^\alpha + A^{-\alpha})^{-1},
\]
where $A=A_h$ is the FEM/FDM discretization of the target elliptic operator ${\cal A}$.
  Another important task is the construction of the spectrally close low $K$-rank preconditioners 
for the matrix valued function $\mathcal{F}_2(A)$.
These issues will be discussed in the next sections.

\subsection{Low-rank approximation and the Kronecker rank bounds}\label{ssec:KronRankBound}

First we notice that the orthogonal transformation matrix 
$G_1\otimes G_2 \otimes G_3$ in the factorization (\ref{eqn:DiagGen}) has Kronecker rank $1$.
This means that the low $K$-rank decomposition of the matrix valued function 
${\cal F} (A)$ is equivalent to the low-rank tensor approximation of the $3$-way folding of the 
diagonal matrix ${\cal F} (\Lambda)$.
 If we suppose that the initial system matrix $A$ is spectrally equivalent to the anisotropic 
Laplacian (see details in \S\ref{ssec:Precond} below) then the existence of the low-rank approximation 
for the target matrix valued function ${\cal F} (A)$, in particular ${\cal F}_2 (A)$ could be  justified 
by slightly modified argument of those applied in \cite{HKKS:18} for the case of discrete Laplacain. 

In what follows, we discuss the Tucker/canonical decomposition of the coefficients tensor 
  ${\bf D}_\Lambda = [d_\Lambda(i_1,i_2,i_3)]\in \mathbb{R}^{n_1 \times n_2 \times n_3}, \; i_\ell\in \{1,\ldots,n_\ell\}$, 
  obtained by reshaping the diagonal of the matrix ${\cal F}_2 (\Lambda)\in \mathbb{R}^{n^3\times  n^3}$ 
  to the third order tensor ${\bf D}_\Lambda$. 
 For example, in the case of discrete Laplacian the elements of the corresponding rank-$3$ coefficients 
 tensor take a simple form 
 \[
 d_\Delta(i_1,i_2,i_3)=\lambda_{i_1} + \lambda_{i_2} + \lambda_{i_3},
\]
 with the eigenvalues $\lambda_{i_\ell}$ of univariate Laplacian. 
 
 For the ease of exposition,
 let us suppose that the 1D elliptic operators $A_\ell$ are all the same for three dimensions so that 
 we omit the index $\ell$ in notations for $\lambda_i=\lambda_i^{(\ell)}$.
 Then the elements of the rank-$3$ coefficients tensor 
 ${\bf D}_A = [d_A(i,j,k)]\in \mathbb{R}^{n \times n \times n}, i,j,k=1,\dots,n$, corresponding to the 
 factorization of the target matrix $A$ in the eigenbasis,  take a form
 \[
 d_A(i,j,k)=\lambda_{i} + \lambda_{j} + \lambda_{k},
\]
 where the eigenvalues $\lambda_{i}$, $i=1,\ldots,n$, are given by (\ref{eqn:eigs_1D}).
 Hence, we arrive at the explicit representation for the entries of 
 \begin{equation}\label{eqn:DiagLambyA}
{\bf D}_\Lambda= {\bf D}_A^{-\alpha} + {\bf D}_A^{\alpha}, 
 \end{equation}
 as follows
 \begin{equation}\label{eq:TensCoefG2_3D}
 d(i,j,k) = \frac{1}{(\lambda_i +\lambda_j +\lambda_k)^\alpha} +  (\lambda_i +\lambda_j+\lambda_k)^\alpha.
\end{equation}
Here we point out that the rank-$R$ canonical approximation of the third order tensor  ${\bf D}_\Lambda$
is equivalent to the $R$-term Kronecker representation of the diagonal matrix 
${\cal F}_2 (\Lambda)\in \mathbb{R}^{n^3\times  n^3}$, due to the relation
\begin{equation*} \label{eqn:CanFormat_3D}
\mathcal{F}_2(\varLambda) = 
\sum_{k=1}^R \diag \big(\mathbf{u}_1^{(k)} \otimes \mathbf{u}_2^{(k)}\otimes \mathbf{u}_3^{(k)}  \big),
\end{equation*}
where $\mathbf{u}_{\ell}^{(k)}\in\R^{n_{\ell}}$ are
the skeleton vectors  of the corresponding canonical decomposition of ${\bf D}_\Lambda$.
This is  straightforwardly  translated to the respective $R$-term Kronecker representation of 
the matrix valued function  ${\cal F}_2 (A)$ of interest
\begin{equation}\label{eqn:DiagKron3D}
 \begin{split}
{\cal F}_2(A)  
& = (G_1^T\otimes G_2^T\otimes G_3^T) {\cal F}_2 (\Lambda)(G_1\otimes G_2 \otimes G_3)\\
& = (G_1^T\otimes G_2^T\otimes G_3^T)
\sum_{k=1}^R \diag \big(\mathbf{u}_1^{(k)} \otimes \mathbf{u}_2^{(k)}\otimes \mathbf{u}_3^{(k)}  \big)
(G_1\otimes G_2 \otimes G_3)\\
& = \sum_{k=1}^R (G_1^T \diag \mathbf{u}_1^{(k)}G_1) \otimes
 (G_2^T \diag \mathbf{u}_2^{(k)}G_2) \otimes (G_3^T \diag \mathbf{u}_3^{(k)}G_3).
\end{split}		
\end{equation}
 The representation (\ref{eqn:DiagKron3D}) benefits from the linear scaling in $d$ for both storage size and matrix-vector 
 multiplication cost.
 
 In turn, the low-rank canonical decomposition (approximation) of the tensor ${\bf D}_\Lambda$ given by 
 (\ref{eq:TensCoefG2_3D}) is performed with the robust multigrid full-to-Tucker-to-canonical algorithm 
 \cite{KhKh3:08,Khor2Book2:18} sketched in Appendix 1.

 The existence of the accurate low-rank canonical approximation of the tensor ${\bf D}_\Lambda$
 can be analyzed separately for both summands in (\ref{eqn:DiagLambyA}). 
 For the term with negative fractional power ${\bf D}_A^{-\alpha}$
 the Laplace integral transform representation (\ref{eqn:Laplace_transf}) and the corresponding sinc 
 quadrature approximation for the integral on the real axis (\ref{eqn:sinc_Laplace_tr}) apply to the target tensor 
 pointwise to obtain
\begin{equation} \label{eqn:sinc_Laplace_tr_Dalpha}
{\bf D}_A^{-\alpha}[i_1,i_2,i_3]= \frac{1}{\Gamma(\alpha)}
  \int_0^\infty t^{\alpha -1} e^{-t {\bf D}_A }\, dt \approx 
 \sum\limits_{k=-M}^M \hat{c}_k t_k^{\alpha -1} e^{-t_k {\bf D}_A}= 
 \sum\limits_{k=-M}^M c_k \bigotimes_{\ell=1}^d e^{-t_k \lambda_{i_\ell} }.
\end{equation}
Assume that the argument in the exponential in (\ref{eqn:sinc_Laplace_tr_Dalpha}) varies in the fixed 
interval on the 
positive semi-axis, i.e. $0< a_0 \leq \lambda_{i} + \lambda_{j} + \lambda_{k}\leq a_1 < \infty$, which is the case 
in our construction, then there is the quasi-optimal choice of the quadrature points $t_k$ and weights $c_k$ that
ensures the exponentially fast convergence of the quadrature rule in the number of terms, $2M+1$
 \cite{GHK:05,HaKhtens:04I,KhorBook:18}. Hence the number of separable terms, $R=2M+1$, 
 that is the respective canonical rank, can be estimated by 
 $$ R \leq C |\log \varepsilon |, 
 $$
 where $\varepsilon >0$  is the accuracy threshold.
 The rank bound for the positive power of ${\bf D}_A^{\alpha}$, $0<\alpha <1$, follows from the pointwise 
 (Hadamard product) factorization
 \[
  {\bf D}_A^{\alpha} = {\bf D}_A \cdot {\bf D}_A^{\alpha-1},\quad \alpha-1 <0,
 \]
where ${\bf D}_A$ is the rank-$3$ tensor, implying $\mbox{rank}({\bf D}_A^{\alpha})\leq 3\, R$.

\subsection{Preconditioner in the low-rank Kronecker form}\label{ssec:Precond}

We propose and analyze the two different candidates for the efficient preconditioning of 
the matrix valued function 
${\cal F}_2 (A)$: \\
(A) The preconditioning matrix is constructed by using the tensor decomposition of ${\cal F}_3 (\Delta)$
by using the Fourier based diagonalization of the 
discrete anisotropic Laplacian $\Delta$ in the similar way as in \cite{HKKS:18};\\
(B) Making use of the direct low $K$-rank approximation to the reciprocal matrix valued function 
\[
 \mathcal{F}_3(A)= (A^\alpha + A^{-\alpha})^{-1},
\]
where the target discrete elliptic operator (matrix) $A$ is factorized in the eigenbasis 
of the univariate elliptic operators $A_\ell$, $\ell=1,\ldots,3$, with variable coefficients.

First, we recall some basic  rank-structured decompositions for functions of the discrete Laplacian  
presented in \cite{HKKS:18}. 
The one-dimensional Laplace operator $\Delta_{(\ell)}$ has an eigenvalue decomposition in the Fourier basis, i. e.
\begin{equation*}
	\Delta_{(\ell)} = F_\ell^* {\varLambda}_{(\ell)} F_\ell.
\end{equation*}
In the case of homogeneous Dirichlet boundary conditions the matrix $F_\ell \in \mathbb{R}^{n\times n}$ defines the 
$\sin$-Fourier transform  and the diagonal matrix
${\varLambda}_{(\ell)}=\mbox{diag}\{\lambda^{(\ell)}_1,\dots,\lambda^{(\ell)}_n\}$ is composed of  
 the eigenvalues of the univariate discrete Laplacian, $\lambda_k^{(\ell)}$,   which are given by
\begin{equation*}\label{eqn:eig_lap}
	\lambda_k = -\frac{4}{h_\ell^2}\sin^2 \bigg( \frac{\pi k}{2(n_\ell+1)} \bigg)
	 = -\frac{4}{h_\ell^2}\sin^2 \bigg( \frac{\pi k h_\ell}{2} \bigg).
\end{equation*}

Analogously to section \ref{ssec:StiffnessMatrixinLR}, we can use the properties of 
the Kronecker product and rewrite the first summand in \eqref{eqn_low_rank_lap} as
\begin{equation*}
\Delta_{(1)} \otimes I_2 \otimes I_3 = 
(F_1^*\otimes F_2^* \otimes F_3^*) ({\varLambda}_{(1)}\otimes I_2\otimes I_3) (F_1\otimes F_2 \otimes F_3).
\end{equation*}
Rewriting the other summands in the same style, we finally can write equation
\eqref{eqn_low_rank_lap} as
\begin{equation}\label{eqn:DiagLaplace}
\begin{split}
\Delta = \ &(F_1^*\otimes F_2^*\otimes F_3^*) 
	\big( \varLambda_1\otimes I_2\otimes I_3 + I_1\otimes \varLambda_2\otimes I_3 + 
	I_1\otimes I_2\otimes \varLambda_3 \big)
	(F_1\otimes F_2 \otimes F_3).
\end{split}
\end{equation}

For $d=2$, the expression simplifies to
\begin{equation}\label{eqn:DiagLaplace2D}
\begin{split}
\Delta &= (F_1^*\otimes F_2^*)
	\underbrace{\big( \varLambda_1\otimes I_2 + I_1\otimes \varLambda_2 \big)}_{\eqqcolon \varLambda}
	(F_1\otimes F_2 ).
\end{split}
\end{equation}

With the help of the eigenvalue decomposition of $\Delta$ \eqref{eqn:DiagLaplace2D}, 
we can compute a function $\mathcal{F}=\mathcal{F}_3$ applied to $\Delta$ as
\begin{equation*}\label{eqn:DiagFLaplace2D}
\mathcal{F}(\Delta)  = (F_1^*\otimes F_2^*) \mathcal{F}(\varLambda) (F_1\otimes F_2 ).
\end{equation*}

Supposing low-rank decompositions for both $\mathcal{F}(A)$ and a vector $\mathbf{x} \in \R^N$ 
in the same style as in to section \ref{ssec:StiffnessMatrixinLR}, we can compute a matrix-vector product as
\begin{equation}\label{eqn:Lap2DtimesX}
\begin{split}
	\mathcal{F}(\Delta) \mathbf{x} &\approx (F_1^*\otimes F_2^*) 
	\bigg( \sum_{k=1}^R \diag \big(\mathbf{u}_1^{(k)} \otimes \mathbf{u}_2^{(k)}\big) \bigg)
	(F_1\otimes F_2 )
	\bigg( \sum_{j=1}^S \mathbf{x}_1^{(j)} \otimes \mathbf{x}_2^{(j)} \bigg)\\
	&= \sum_{k=1}^R \sum_{j=1}^S F_1^* \big( \mathbf{u}_1^{(k)} \odot F_1 \mathbf{x}_1^{(j)} \big) 
	\otimes F_2^*\big( \mathbf{u}_2^{(k)} \odot F_2 \mathbf{x}_2^{(j)} \big),
\end{split}
\end{equation}
where $\odot$ denotes the componentwise (Hadamard) product.

Using the sin-FFT, expression \eqref{eqn:Lap2DtimesX} can be  evaluated   in the factored 
form in $\mathcal{O}(RSn\log n)$ flops, where
$n = \max(n_1,n_2)$,  and $S$ is the Kronecker rank of vector $\mathbf{x}$.  

For $d=3$, with completely analogous reasoning, equation \eqref{eqn:Lap2DtimesX} becomes
\begin{equation}\label{eqn:Lap3DtimesX}
	\mathcal{F}(A) \mathbf{x} = 
	\sum_{k=1}^R \sum_{j=1}^S F_1^* \big( \mathbf{u}_1^{(k)} \odot F_1 \mathbf{x}_1^{(j)} \big) \otimes
	F_2^*\big( \mathbf{u}_2^{(k)} \odot F_2 \mathbf{x}_2^{(j)} \big)
	\otimes F_3^*\big( \mathbf{u}_3^{(k)} \odot F_3 \mathbf{x}_3^{(j)} \big),
\end{equation}
and similar in the case of $d>3$.  It can be evaluated in $\mathcal{O}(d RSn\log n)$ flops for 
 $K$-rank structured vectors represented on $n^{\otimes d}$ Cartesian grid.

It is worth to note that the previous constructions remain valid also in the case of anisotropic Laplacian 
\begin{equation}\label{eqn:anisotropicL}
 {\cal B} = -\sum_{\ell=1}^d \frac{\partial}{d x_\ell} b^\ell \frac{\partial}{d x_\ell}, 
 \quad \mbox{with some constants} \quad b^\ell >0.
\end{equation}

 In case (A), for the sake of preconditioning, we need the low $K$-rank 
approximation\footnote{Note that the numerical algorithm for the Tucker and canonical tensor decomposition of 
the $d$th-order tensors has been introduced in \cite{KhKh3:08} and it was adapted 
to the case of fractional Laplacian in \cite{HKKS:18}.} 
to the matrix valued function of the form
\[
 \mathcal{F}_3({ B})= ({ B}^\alpha + { B}^{-\alpha})^{-1},
\]
where the anisotropic Laplacian stiffness matrix ${B}$ corresponding to the operator 
in (\ref{eqn:anisotropicL})
can be factorized in the Fourier basis as in (\ref{eqn:DiagLaplace}).
To that end, we define the average coefficients
$$
{b}_0^{\ell}=\frac{1}{2}(\max   a^+_\ell(x)   + \min   a^-_\ell(x)   ) ,\quad \ell=1,2,3,
$$ 
where   $a^+_\ell(x) > 0$ and $a^-_\ell(x) > 0$   are chosen as {\it majorants and minorants} 
of the equation coefficient $a_\ell(x_\ell)$, respectively. 
Then we introduce the fractional anisotropic Laplacian type operator 
${\cal B}$ generated by the constant coefficients ${b}_0^{\ell}$, $\ell=1,2,3,$ as follows
\begin{equation} \label{eqn:PrecClassA}
  {\cal B}^\alpha  :=  \left(-\sum_{\ell=1}^d \frac{\partial}{d x_\ell} b_0^\ell \frac{\partial}{d x_\ell}
 \right)^\alpha, 
 \quad  0\leq \alpha\leq 1,
\end{equation}
and define the desired preconditioning matrix by using the discrete versions of the nonlocal 
operators ${\cal B}^{\alpha}$ and ${\cal B}^{-\alpha}$ 
in a form of a weighted sum of the matrix ${B}^{\alpha}$ and its inverse
\begin{equation} \label{eqn:Precond_var}
{B}_0= \beta {B}^{-\alpha} + \tfrac{\gamma}{\beta}{B}^{\alpha}.
\end{equation}
It can be proven that the condition number of the preconditioned matrix ${B}_0^{-1} {\cal F}_2(A)$, where
${\cal F}_2({A})= \beta {A}^{-\alpha} + \tfrac{\gamma}{\beta} {A}^{\alpha}$,
is uniformly bounded in $n$. 
\begin{theorem}\label{lem:cond_bound_var}
Let the matrix $B_0$ be given by (\ref{eqn:Precond_var}).
Under the above assumptions the condition number of the preconditioned matrix for 
the target Lagrange equation with the system matrix ${\cal F}_2({A})$
is uniformly bounded in $n$, specifically 
\[
 \mbox{cond}\{{B}_0^{-1} {\cal F}_2({A}) \} \leq C 
 \frac{\max\{(1+q )^\alpha,(1-q )^{-\alpha}\}}{\min\{(1+q )^{-\alpha},(1-q )^{\alpha}\}}
\] 
with some constant $0< q < 1$ independent on the grid size $n$.
\end{theorem}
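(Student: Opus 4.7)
\medskip

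\noindent\textbf{Proof plan.} The plan is to reduce the condition number estimate to a spectral equivalence of the form $c_1 B_0 \preceq \mathcal{F}_2(A) \preceq c_2 B_0$ on $\mathbb{R}^{n^d}$, obtained by lifting a bilinear-form-level comparison between $A$ and $B$ through the operator monotone calculus. The main work is in the passage from $A \preceq B$ type inequalities to the corresponding inequalities for $A^{\pm \alpha}$, which will be the key step.

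\medskip

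\noindent\textbf{Step 1: Pointwise/bilinear comparison of $A$ and $B$.} First, I would use the choice of the majorants and minorants $a_\ell^\pm$ and the average $b_0^\ell = \tfrac{1}{2}(a_\ell^+ + a_\ell^-)$ to write $a_\ell(x_\ell) = b_0^\ell (1 + \theta_\ell(x_\ell))$ with $|\theta_\ell(x_\ell)| \leq q$, where
\[
q := \max_{\ell} \frac{a_\ell^+ - a_\ell^-}{a_\ell^+ + a_\ell^-} \in (0,1).
\]
Inserting this into the bilinear form $A(u,v) = \int_\Omega \mathbb{A}(x) \nabla u \cdot \nabla v\, dx$ and the corresponding form for the constant-coefficient operator $\mathcal{B}$ yields the pointwise estimate $(1-q)\, B(u,u) \leq A(u,u) \leq (1+q)\, B(u,u)$ for all $u \in V$. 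Since the finite-difference/finite-element discretization is consistent with the bilinear forms, this transfers to the matrix inequality
\[
(1-q)\, B \preceq A \preceq (1+q)\, B
\]
in the Löwner order on $\mathbb{R}^{n^d}$.

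\medskip

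\noindent\textbf{Step 2: Lifting to fractional powers.} Here the key tool is the Löwner–Heinz theorem: for $0 \leq \alpha \leq 1$, the map $X \mapsto X^\alpha$ is operator monotone on positive definite matrices, while $X \mapsto X^{-\alpha}$ is operator antimonotone. Applying this to the double inequality from Step 1 gives
\[
(1-q)^\alpha B^\alpha \preceq A^\alpha \preceq (1+q)^\alpha B^\alpha,\qquad
(1+q)^{-\alpha} B^{-\alpha} \preceq A^{-\alpha} \preceq (1-q)^{-\alpha} B^{-\alpha}.
\]
Note that the $B$-terms commute in both inequalities because $A$ and $B$ here are inserted as arguments of scalar spectral calculus; this is where the assumption $0 \leq \alpha \leq 1$ is essential and is the main technical obstacle of the proof (outside this range operator monotonicity fails).

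\medskip

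\noindent\textbf{Step 3: Assembling the bound on $\mathcal{F}_2(A)$ versus $B_0$.} Taking the appropriate linear combination with the positive weights $\beta$ and $\gamma/\beta$ and using that $B^\alpha$ and $B^{-\alpha}$ commute so that positive linear combinations preserve the Löwner ordering, I obtain
\[
\min\{(1+q)^{-\alpha},(1-q)^{\alpha}\}\, B_0 \;\preceq\; \mathcal{F}_2(A) \;\preceq\; \max\{(1-q)^{-\alpha},(1+q)^{\alpha}\}\, B_0.
\]
Since $B_0$ is symmetric positive definite, the standard characterization of the condition number of $B_0^{-1}\mathcal{F}_2(A)$ as the ratio of the extreme generalized eigenvalues of the pencil $(\mathcal{F}_2(A), B_0)$ yields
\[
\mathrm{cond}\{B_0^{-1}\mathcal{F}_2(A)\} \;\leq\; \frac{\max\{(1+q)^{\alpha},(1-q)^{-\alpha}\}}{\min\{(1+q)^{-\alpha},(1-q)^{\alpha}\}},
\]
with a constant $C$ absorbing the discretization-dependent constants from Step 1. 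Since $q$ depends only on the majorants/minorants of the coefficients $a_\ell$ and not on the mesh parameter $h$, the bound is uniform in $n$, as claimed.
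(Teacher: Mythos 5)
Your proof is correct and takes essentially the same route as the paper: the spectral equivalence $(1-q)\,B \preceq A \preceq (1+q)\,B$ from the majorant/minorant bounds, lifted to $A^{\pm\alpha}$ by operator monotonicity of $X\mapsto X^{\alpha}$ for $0\le\alpha\le 1$ (the Löwner--Heinz step the paper leaves implicit in ``we readily derive''), then combined with the weights $\beta$ and $\gamma/\beta$ and the generalized-eigenvalue characterization of $\mathrm{cond}\{B_0^{-1}\mathcal{F}_2(A)\}$. The only cosmetic differences are your explicit formula for $q$ (equivalent to the paper's) and your remarks about commutativity, which are unnecessary but harmless.
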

\begin{proof}
The matrix $ B$ generated by the coefficients
${b}_0^{\ell}$ corresponding to (\ref{eqn:PrecClassA}) allows the condition number estimate %
\[
 cond \{{B}^{-1} {A} \} \leq C \frac{1+q}{1-q},  \quad\mbox{with}\quad
 q:=\max_{\ell}\max_x \frac{a_\ell^+(x) - {b}_0^\ell}{b_0^\ell}<1,
\]
which is the consequence of the spectral equivalence estimate
\begin{equation} \label{eqn:specequivBA}
 C_0(1-q )   {B}   \leq  {A} \leq C_1 (1+q )   {B} ,
\end{equation}
where the latter follows from the simple bounds
\[
   a^+_\ell(x) \leq (1+q)  b_0^\ell, \quad (1-q)  b_0^\ell \leq a^-_\ell(x), \quad \ell = 1,2,3.
\]
As a result of (\ref{eqn:specequivBA}), we readily derive the spectral bounds for the fractional elliptic 
operators of interest,
\[
 C_0 (1-q )^\alpha   {B}^\alpha   \leq  {A}^\alpha \leq C_1(1+q )^\alpha   {B}^\alpha  , 
\]
and
\[
 C_0(1+q )^{-\alpha}   {B}^{-\alpha}  \leq  {A}^{-\alpha} \leq C_1(1-q )^{-\alpha}   {B}^{-\alpha} . 
\]
This proves the Theorem on the spectral equivalence of the preconditioner ${B}_0$ by summing up the 
above estimates with the proper weights. Indeed, we obtain for 
${\cal F}_2({A})= \beta {A}^{-\alpha} + \tfrac{\gamma}{\beta} {A}^{\alpha}$
\[
 {\cal F}_2({A}) \leq C_1 (\beta (1-q)^{-\alpha}{B}^{-\alpha}+
 \tfrac{\gamma}{\beta} (1+q)^{\alpha}{B}^{\alpha}) \leq C_1 \max\{(1+q )^\alpha,(1-q )^{-\alpha}\} B_0,
\]
and likewise
\[
  C_0 \min\{(1+q )^{-\alpha},(1-q )^{\alpha}\}  B_0 \leq
 C_0(\beta (1+q)^{-\alpha}{B}^{-\alpha}+ \tfrac{\gamma}{\beta} (1-q)^{\alpha}{B}^{\alpha})\leq {\cal F}_2({A}), 
 \]
which completes the proof.  
\end{proof}\\

\vspace*{5mm}
The practical application of this Theorem in our numerical computations 
presupposes the low $K$-rank approximation of the matrix valued 
function of anisotropic Laplacian, ${B}_0^{-1}$, which is performed by using the multigrid Tucker 
approximation with the consequent Tucker-2-Canonical transform, applied to the respective diagonal 
core matrix ${\cal F}_2({\varLambda})$, see also \S\ref{ssec:KronRankBound} and Appendix 1.

To conclude the discussion of case (A) we notice that the presented preconditioning technique 
also applies to the case of degenerate elliptic operator with the non-negative 
equation coefficients $a_\ell(x_\ell) \geq 0$ for some spacial directions
because the function $ \lambda^\alpha + \lambda^{-\alpha} \geq costant >0$ for all $\lambda \geq 0$.

In case (B), we perform the direct low $K$-rank tensor approximation of the matrix valued function 
${\cal F}_3({A})$ along the same line as it is done for the target matrix function ${\cal F}_2({A})$
by using factorization in the eigen-basis of the univariate elliptic operators with variable 
coefficients, see also (\ref{eqn:DiagKron3D}).

\subsection{The PCG scheme with rank truncation}\label{ssec:LowRankPCG}

For operators $\mathtt{func}$ and $\mathtt{precond}$ given in a low-rank format, 
such as \eqref{eqn:Gen2DtimesX} (for $d=2$) or \eqref{eqn:Gen3DtimesX} (for $d = 3$) 
and \eqref{eqn:Lap2DtimesX} (for $d=2$) or \eqref{eqn:Lap3DtimesX} (for $d=3$), respectively. 
Krylov subspace methods
can be applied very efficiently, since they only require matrix-vector products.

In our applications, we use the formulation of the PCG method in Algorithm~1, \cite{HKKS:18},       
which is independent of $d$, as long as an appropriate rank truncation procedure $\mathtt{trunc}$ is chosen. 
As adaptive rank truncation procedure we use the reduced singular value decomposition 
in the case $d = 2$ and the reduced higher order singular value decomposition 
based on the Tucker-to-canonical approximation is used for $d = 3$, 
see Appendix 1 and \cite{KhKh3:08} for details.  

For the sake of completeness we present this algorithm in Appendix 2.   

\section{Numerical tests} \label{sec:NumTests}



In this section, we present numerical results for both the 2D and 3D cases. 
In all tests, we choose $\alpha = 1, 1/2, 1/10$, rank truncation parameter $\varepsilon = 10^{-6}$, 
the preconditioner rank $R = 6$, and $\beta = 1$.  
Throughout this section, let $A=A_h$ be the FDM discretization of the target elliptic operator ${\cal A}$.
We investigate the numerical results and properties of the algorithm for solving 
equation (\ref{eqn:Lagrange_cont}), 
\begin{equation*}
(\beta A^{-\alpha} + \frac{\gamma}{\beta} A^\alpha) \uu = \yom,
\end{equation*} 
with respect to the optimal control $\uu$ and subsequently equation (\ref{eqn:State}), that is
 \begin{equation*}
\y = \beta A^{-\alpha} \uu,
\end{equation*} with respect to the state variable $\y$.  

The PCG iteration is stopped when the relative residual is small enough, that is whenever
$$ 
\left|\int_\Omega (\beta A^{-\alpha} + \frac{\gamma}{\beta} A^\alpha )\tilde{u} - y_\Omega \ \mathrm{d}x \right| \ \le \ 10^{-5}  
$$ 
holds for $\tilde{u}$ solution of (\ref{eqn:Lagrange_cont2}) discretized on the computational mesh. 
For the sake of simplicity, we define 
$$A_u := (\beta A^{-\alpha} + \frac{\gamma}{\beta} A^\alpha). 
$$

Throughout our numerical tests, we consider the following right hand sides $y_\Omega$: 
box-type and $H$-type shapes   as shown in Figures \ref{design1} and \ref{design2} for 2D and 3D cases,
respectively.
 \begin{figure}[H]
 \centering
 	\subfigure[box-type]{\includegraphics[width=0.32\textwidth]{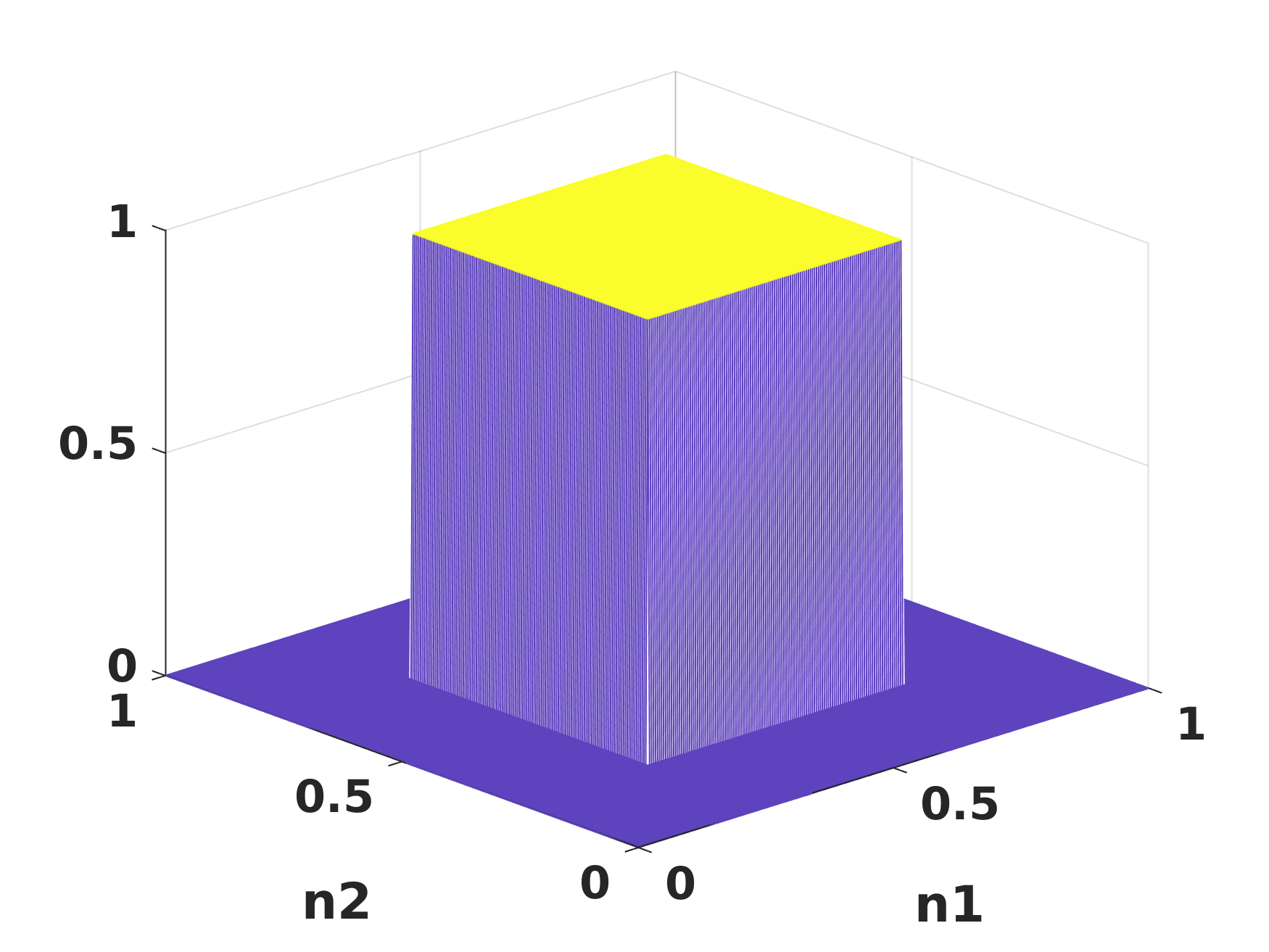}}
 	\subfigure[$H$-type]{\includegraphics[width=0.32\textwidth]{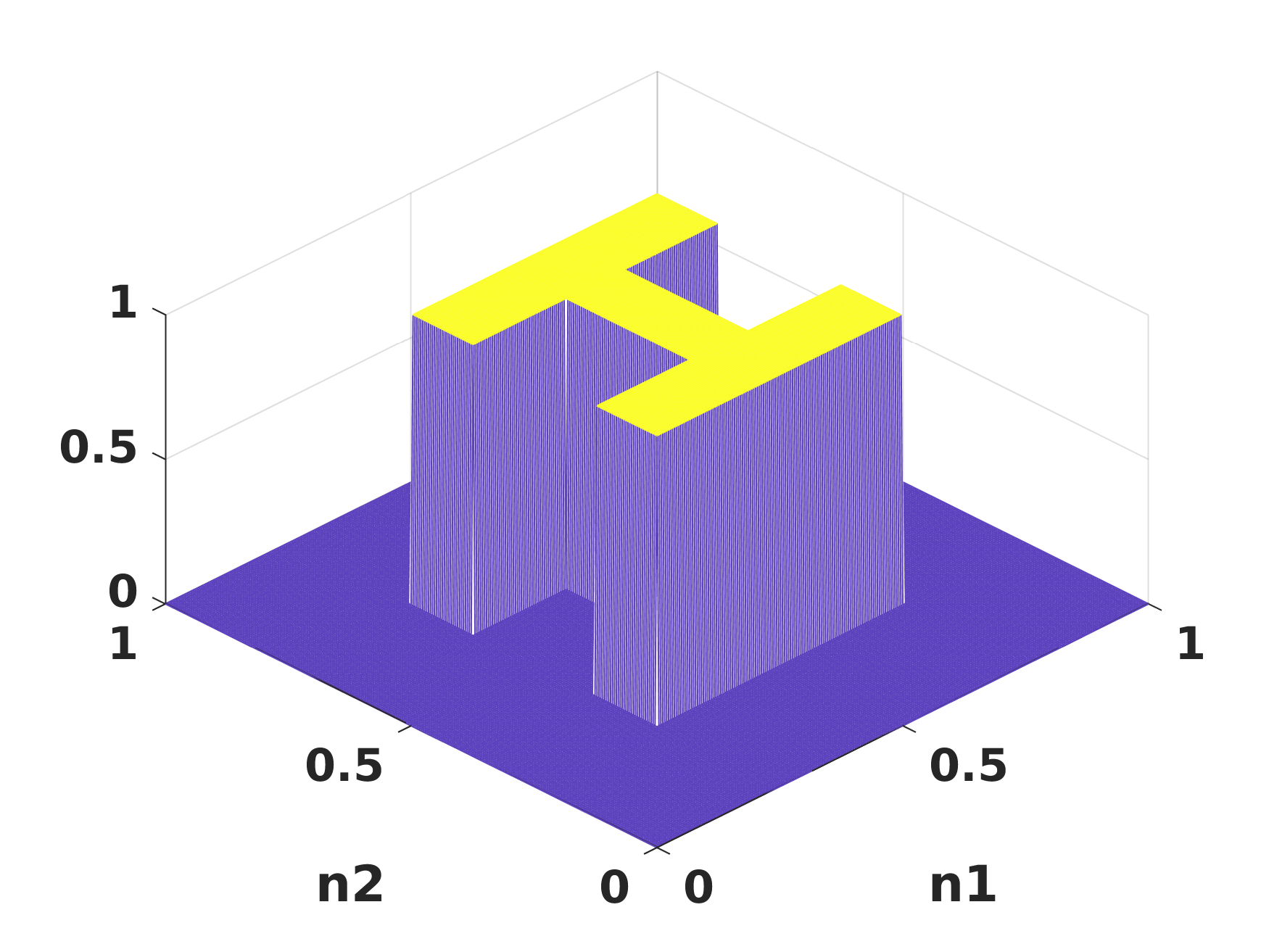}}
 	\caption[Test 2: Lösungen $u$ mit $\alpha = 1$]{Box-type and $H$-form-type 
 	right hand sides $y_\Omega$ for the 2D control problem.}
 	\label{design1}
 \end{figure}

  \begin{figure}[H]
 \centering
 	\subfigure[box-type]{\includegraphics[width=0.32\textwidth]{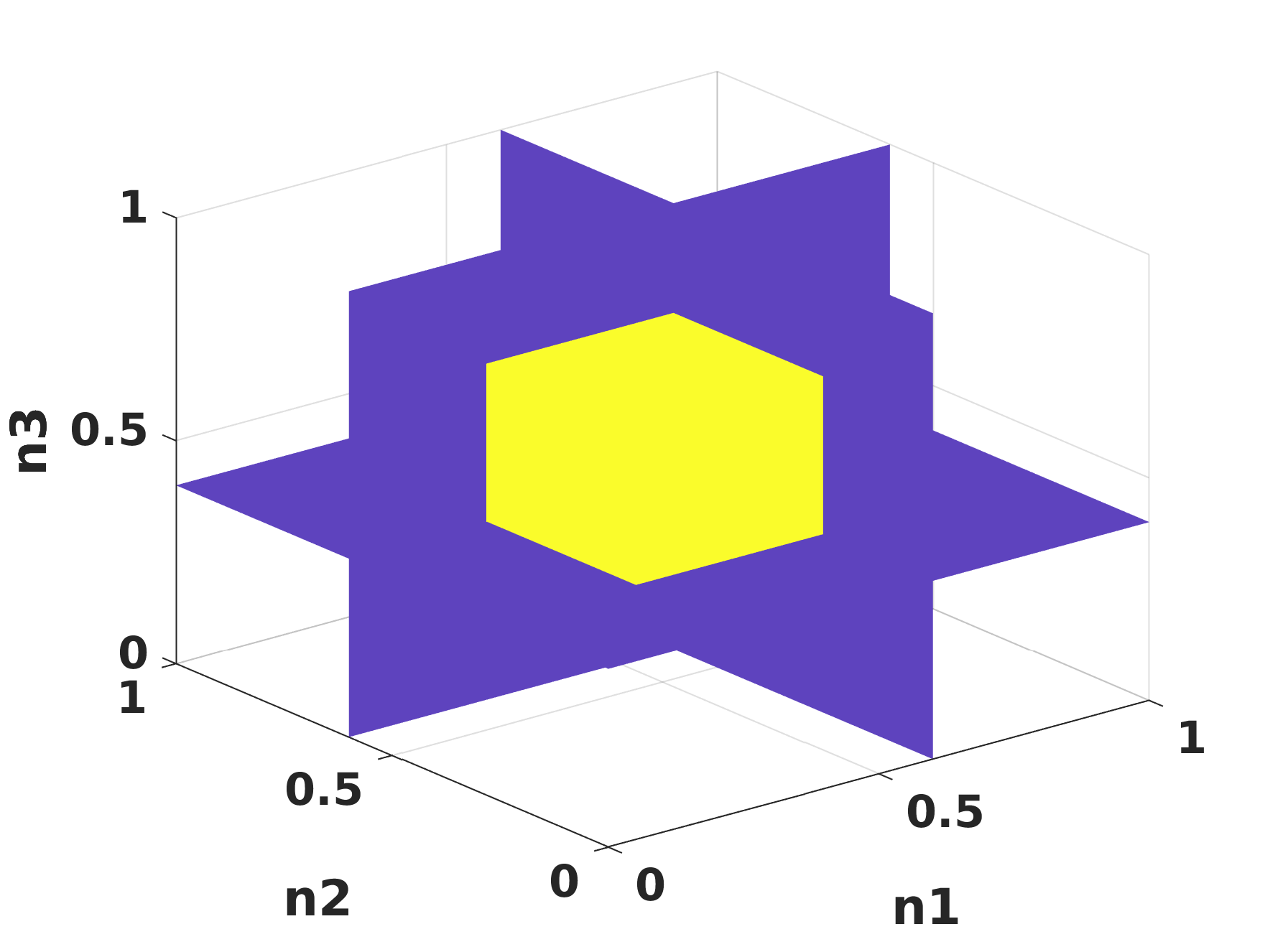}}
 	\subfigure[$H$-type]{\includegraphics[width=0.32\textwidth]{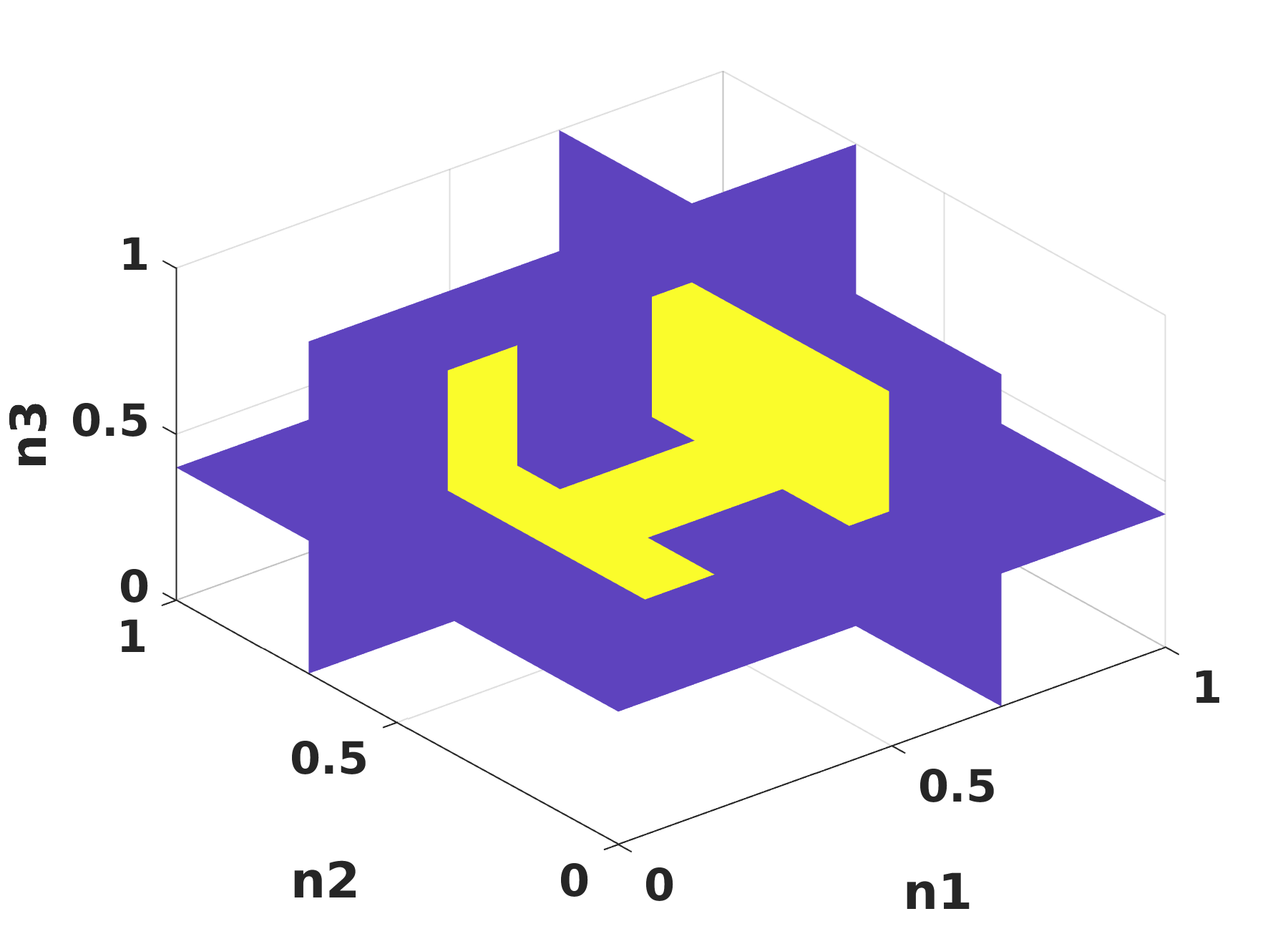}}
 	\caption[Test 2: Lösungen $u$ mit $\alpha = 1$]{Box-type and $H$-form-type 
 	right hand sides $y_\Omega$ for the 3D control problem.}
 	\label{design2}
 \end{figure}
 For solution of the control problem
we use the partly high oscillating equation coefficients 
\begin{align}
a_1(x_1) &= \sin(100\pi x_1) + 1.1, \label{fun:a1}\\
a_2(x_2) &= \sin(x_2)\cos(x_2), \label{fun:a2}\\
a_3(x_3) &= \cos(5\pi x_3)+2 \label{fun:a3}
\end{align}
for the construction of the diagonal equation coefficient matrices in a 
form (\ref{eqn:DiagCoef2}) and (\ref{eqn:DiagCoef3}) for the 2D and 3D cases, respectively.\\

In what follows, different preconditioning methods are used: in 2D case, we use an imprecise 
low $K$-rank approximation of the inverse matrix $A_u^{-1}$ as a preconditioner, 
whereas in 3D case, the classical anisotropic Laplace operator 
in the low-rank Kronecker form is used as a preconditioner, see chapter \ref{ssec:Precond},
Theorem \ref{lem:cond_bound_var} and \cite{HKKS:18}.

All simulations are performed in Matlab 2019b on a laptop with 16GB RAM and Intel(R) Core(TM) i7-8650U, 
using Ubuntu 18.04.

We use the low-rank canonical representation for the solution vector ${\bf u}$ and for the right-hand side,
and a short-term Kronecker product decomposition of all matrices involved. We maintain the 
quasi-optimal rank bound for the solution vector adapted to the given accuracy threshold. 
For the decomposition of the governing operators in 3D case we apply the multigrid Tucker 
tensor approximation \cite{KhKh3:08} and the subsequent Tucker-to-canonical transform. 
The adaptive rank reduction for the rank-structured tensors representing the current iterant
for the vector ${\bf u}$ in the course of PCG iteration is calculated 
by the canonical-to-Tucker algorithm \cite{KhKh:06} combined with the Tucker-to-canonical transform. 
The basic tensor operations are performed by using
the programs from the Matlab TESC package on tensor numerical methods developed in the recent years 
by the second and third authors,  
see \cite{Khor2Book2:18} for short descriptions and related references.

 \subsection{Numerical tests for 2D case}
\label{ssec:numerics_tensor_2D}

First, we validate the usage of the tensor structured PCG algorithm 
 by comparison with the backslash Matlab solver that is applied to the direct finite difference method 
 discretization of equations (\ref{eqn:Lagrange_cont}) and (\ref{eqn:State}) as well as by investigating 
 the singular values of the involved operator $A_u$. 
 
We also present the solutions for the optimal control $\uu$ and for the state variable $\y$ in the respective
equations (\ref{eqn:Lagrange_cont}) and (\ref{eqn:State}) and investigate the impact of 
different regularization parameter values $\gamma = 1, \, 0.01$, and different fractional parameters 
$\alpha$. Note that the preconditioning in 2D case is done by making use of the direct low-rank approximation to the reciprocal matrix valued function $\mathcal{F}_3(A)$, see section \ref{ssec:Precond}.

Table \ref{Tab:times_2d} shows the times needed by both the rank structured pcg solver and the 
backslash Matlab solver for solving equation 
\begin{equation} \label{eqn:4.4}
A^\alpha \uu = \yom 
\end{equation} 
for different 
grid sizes, the box-type design function and with a fixed parameter $\alpha=1$. 
One can clearly observe that the rank structured pcg solver 
outperforms the full Matlab solver for a grid size $n \geq 128 $. 

Table \ref{Tab:times_2d_full} shows the times needed by the different solvers when considering
equation (\ref{eqn:Lagrange_cont}) for a fixed grid size of $n = 64$ grid points in each dimension and different values for $\alpha$ as well as 
the time needed to set up the operator $A_u$ as preliminary work for both solvers. 
Due to a lack of memory capacity, 
it is not possible to store the operator $A_u$ in full size format for grid sizes with $n > 64$ grid points in each dimension, 
see section \ref{ssec:numerics_tensor_3D} for more details. 
\begin{table}[htb]
\begin{center}%
{\footnotesize
\begin{tabular}
[c]{|c|c|c| }%
\hline
  grid points   &    time pcg &  time full solver \\
 \hline
 64    & 0,0079    & 0,0035      \\
  \hline
 128    & 0,0145    & 0,0183       \\
     \hline 
 256	& 0,0306		& 0,0796 		\\
 	\hline
 512	& 0,0454	& 0,4122		\\
 	\hline
 1024	& 0,0937	& 1,8072		\\
 	\hline
 2048	& 0,5994	& 8,5634		\\
 	\hline
  \end{tabular}
\caption{\small Times in seconds needed by the rank-structured pcg scheme and the full-size scheme 
to solve (\ref{eqn:4.4}) with different numbers of grid points and $\alpha = 1$.}
\label{Tab:times_2d}
}
\end{center}
\end{table}

\begin{table}[htb]
\begin{center}%
{\footnotesize
\begin{tabular}
[c]{|c|r|r|r| }%
\hline
     &    $\alpha=1$ &  $\alpha=1/2$ & $\alpha=1/10$ \\
 \hline
 time low-rank pcg     & 0.0092    & 0.0089    & 0.0051  \\
  \hline
 time full solver   & 
 0.3583    & 
 0.4697     & 
 0.4847  \\
     \hhline{|=|=|=|=|}

 time setting up operator (low-rank) &	0.1097 & 0.0264 & 0.0422\\
 \hline
 time setting up operator (full) &	1.1808 	& 21.2779 & 22.5202 \\
 \hline
  \end{tabular}
\caption{\small Times for the rank-structured pcg solver and the full-size solver 
for solution of (\ref{eqn:Lagrange_cont}) with $n=64$ grid points in each dimension and different $\alpha$ 
and times needed to set up $A_u$ as preliminary work.}
\label{Tab:times_2d_full}
}
\end{center}
\end{table}
Figure \ref{fig:G3_err} shows the errors that 
occur for different $\alpha$ when using the rank structured solver compared to the full size solver 
for equation 
(\ref{eqn:Lagrange_cont}), which do not exceed an favourable error bound of $10^{-6}$.
 \begin{figure}[H]
 \subfigure[$\alpha = 1$]{
 \includegraphics[width=5.0cm]{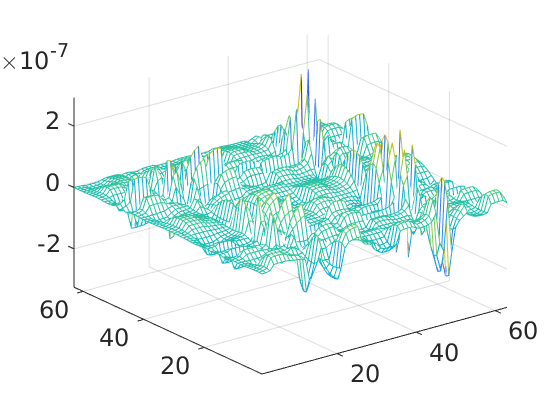}}
 \subfigure[$\alpha = 1/2$]{
 \includegraphics[width=5.0cm]{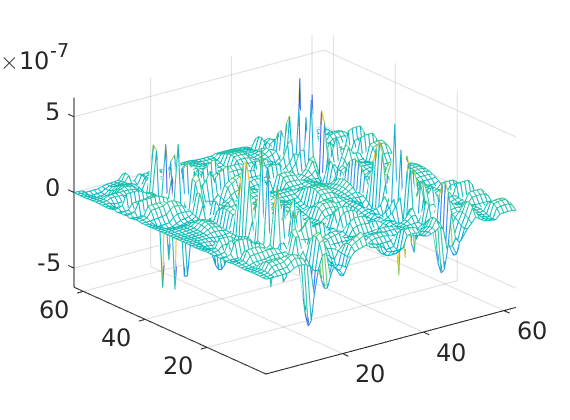}}
 \subfigure[$\alpha = 1/10$]{
 \includegraphics[width=5.0cm]{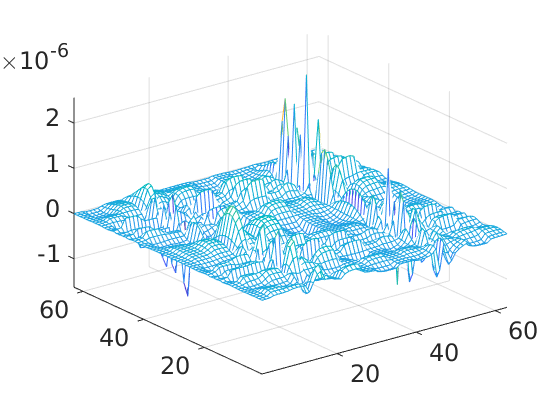}}
  \caption{\small Error for the rank-structured solver solution compared to the 
  full-size solver solution of (\ref{eqn:Lagrange_cont}), $n = 64$. } 
 \label{fig:G3_err}
 \end{figure}
  
In order to validate the existence of the accurate low Kronecker-rank approximation 
to the operator $A_u$ and its inverse, 
\[
A_{inv} = (A^\alpha + A^{-\alpha})^{-1},
\] 
which is required for preconditioning needs, we investigate the singular values of the corresponding matrices. 
Figure \ref{singvalues} demonstrates that for both operator cases $A_u$ and $A_{inv}$ and an exemplary grid size of $511$ grid points in each dimension, there is an exponential decay of 
the corresponding singular values, which justifies 
the existence of an accurate low-rank representation.
\begin{figure}[h]
 	\subfigure[$A_u$]{\includegraphics[width=0.48\textwidth]{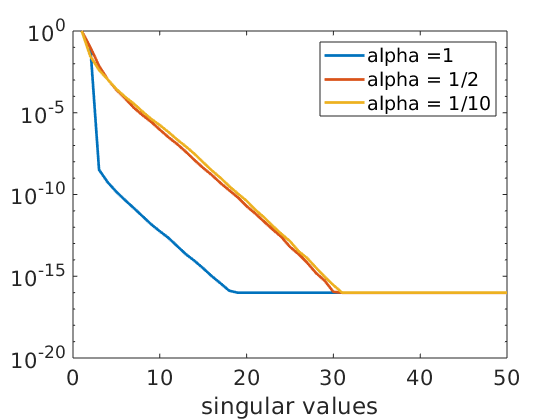}}
 	\subfigure[$A_{inv}$]{\includegraphics[width=0.49\textwidth]{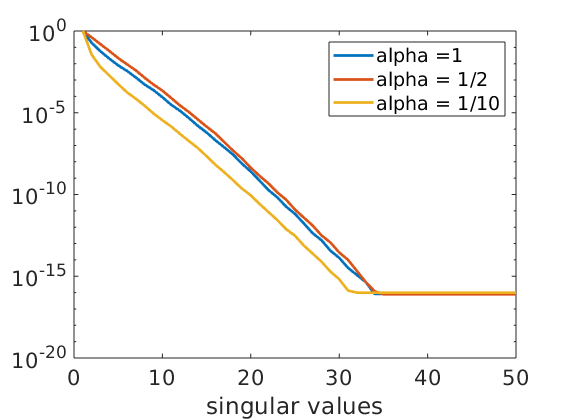}}
 	\caption[Test 2: Lösungen $u$ mit $\alpha = 1/10$]{Decay of singular values for 
 	operators $A_u$ and $A_{inv}$ with different values of $\alpha$ and a univariate grid size with $n = 511$ grid points in each dimension.}
 	\label{singvalues}
 \end{figure}  
%
   

In what follows, we investigate the time the pcg algorithm needs  to solve \eqref{eqn:Lagrange_cont},
\[
 A_u \uu  = \yom,
\]
in our test setting. The corresponding tables and figures display the results for the box-type design function.
Table \ref{Tab:iter_2d} shows the number  of iterations needed by the solver when considering different grid 
sizes and different values for $\alpha$. The results validate a grid independence of the used pcg solver 
concerning the needed numbers of iterations. 
 \begin{table}[H]
\begin{center}%
{\footnotesize
\begin{tabular}
[c]{|c|c|c|c| }%
\hline 
  grid points   &    $\alpha=1$ &  $\alpha=1/2$ & $\alpha=1/10$ \\
  \hline
 64     & 2 & 2 & 1\\ 
  \hline 
 128  & 2 & 2 & 2 \\ 
     \hline 
 256    & 2 & 2 & 2\\ 
 \hline
512 &  2 &  2& 2\\
 \hline
1024 & 2  & 2 & 2\\
 \hline
2048 & 3  &2  &2 \\
 \hline
4098 & 4  &2  &3 \\
 \hline
8196 & 4  &3  & 3\\
     \hline 
  \end{tabular}
\caption{\small Number of iterations for the low-rank solver for solution of $(A^\alpha+A^{-\alpha})\uu=\yom$ 
with different numbers of grid points and different values of $\alpha$. The box-type design function is considered.}
\label{Tab:iter_2d}
}
\end{center}
\end{table}
\vspace*{-8mm} Figure \ref{Fig2D:Time_per_iter} represents the time that the pcg algorithm needs for one iteration. 
The presented data validates the theoretical findings from section \ref{ssec:StiffnessMatrixinLR}, 
that is the numerical cost for the algorithm of the order of $O(RSn^2)$. 
Therefore, the results demonstrate that the 
low-rank pcg scheme circumvents the curse of dimensionality.
\vspace*{-4mm}
\begin{figure}[H]
\centering
 	\includegraphics[width=0.4\textwidth]{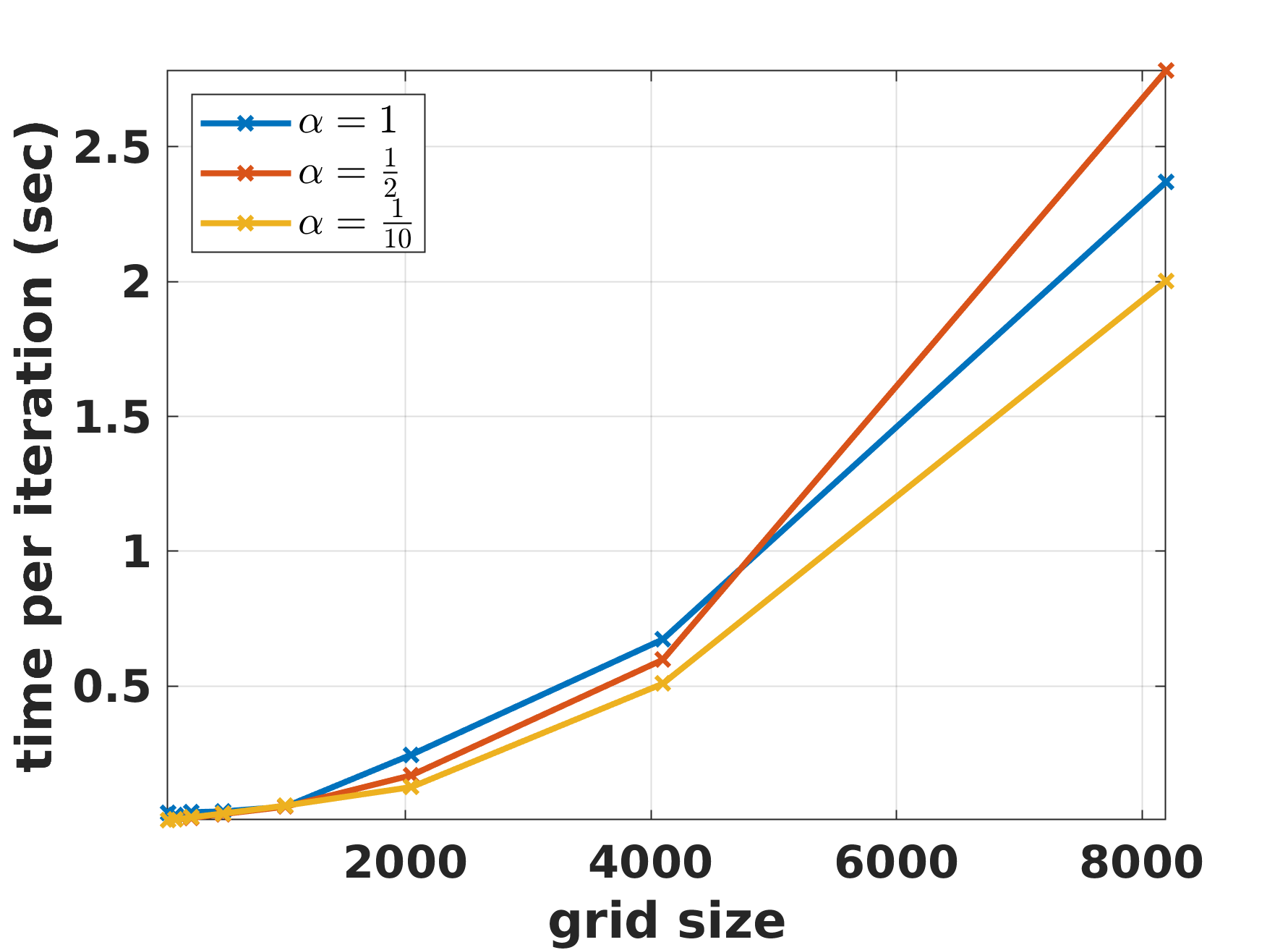}
 	\caption[Test 2: Lösungen $u$ mit $\alpha = 1/10$]{Times per iterations of PCG Algorithm for different 
 	grid sizes and different values of $\alpha$ considering the box-type design function.}
 	\label{Fig2D:Time_per_iter}
 \end{figure}


\subsubsection{Solution for optimal control}
\label{subsubsec:u}

In this section, we present the solution for the optimal control $\uu$, which means 
that with the help of the low-rank PCG algorithm, we solve equation (\ref{eqn:Lagrange_cont}), 
$$A_u \uu = (A^\alpha + A^{-\alpha})\uu = \yom.$$

Figures \ref{Fig2D:u_square} - \ref{Fig2D:u_h_gam2} show the solutions computed by the 
rank-structured PCG scheme using the coefficients functions (\ref{fun:a1}) and (\ref{fun:a2}), 
different right hand sides $\yom$, fractional exponents $\alpha$, and a grid of size $n = 255$ 
grid points in each dimension. 
In figures \ref{Fig2D:u_square} and \ref{Fig2D:u_h} we consider $\gamma = 1$, whereas in 
figure \ref{Fig2D:u_h_gam2}, we investigate the impact of a small regularization parameter $\gamma = 0.01$.\\ 
The effect of the highly 
oscillating coefficient test function (\ref{fun:a1}) on the structure of the optimal control $\uu$ 
can be recognized in all figures.
 \begin{figure}[H]
 	\subfigure[$\alpha = 1$]{\includegraphics[width=0.32\textwidth]{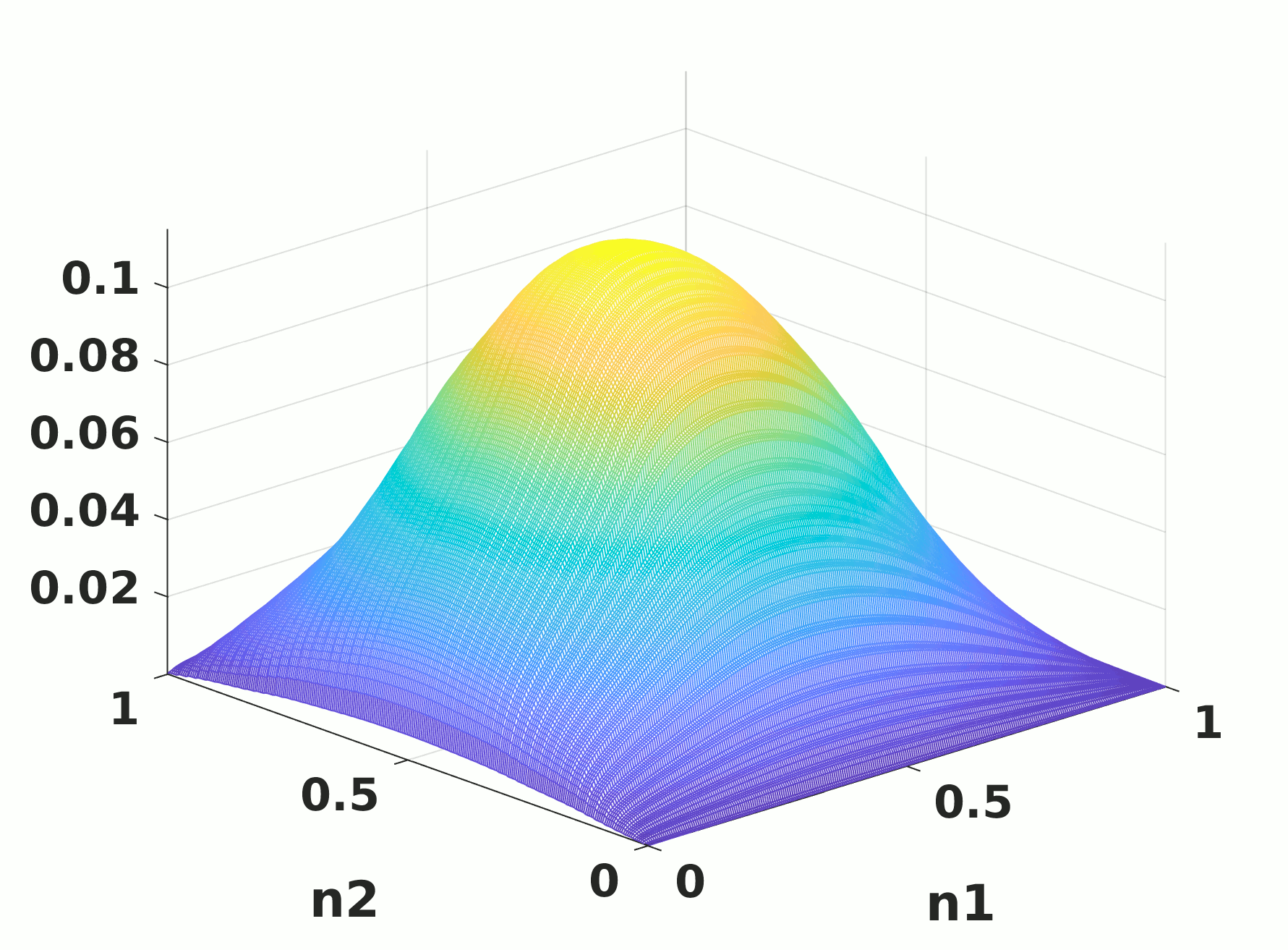}}
 	\subfigure[$\alpha = 1/2$]{\includegraphics[width=0.32\textwidth]{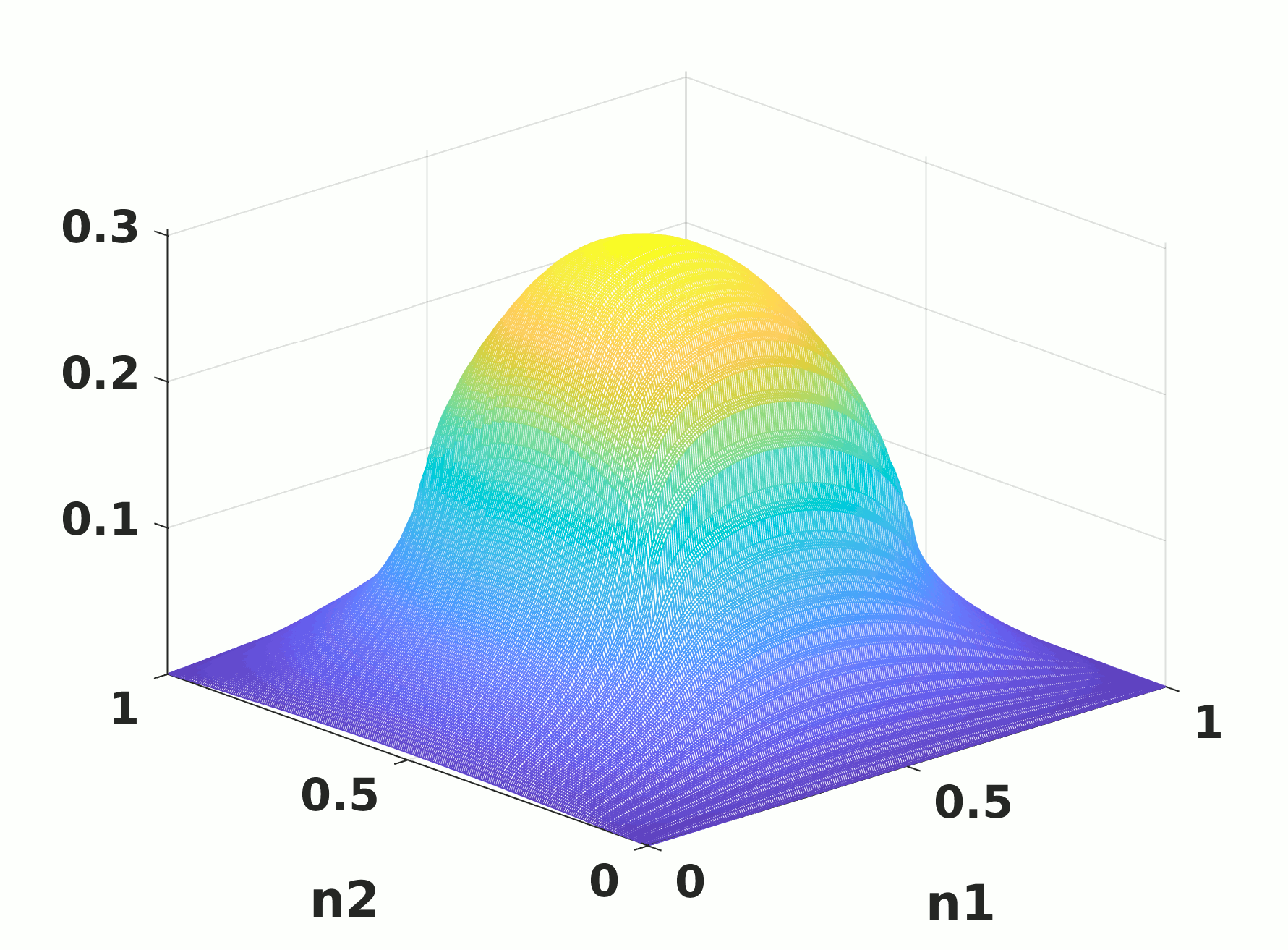}}
 	 \subfigure[$\alpha = 1/10$]{\includegraphics[width=0.32\textwidth]{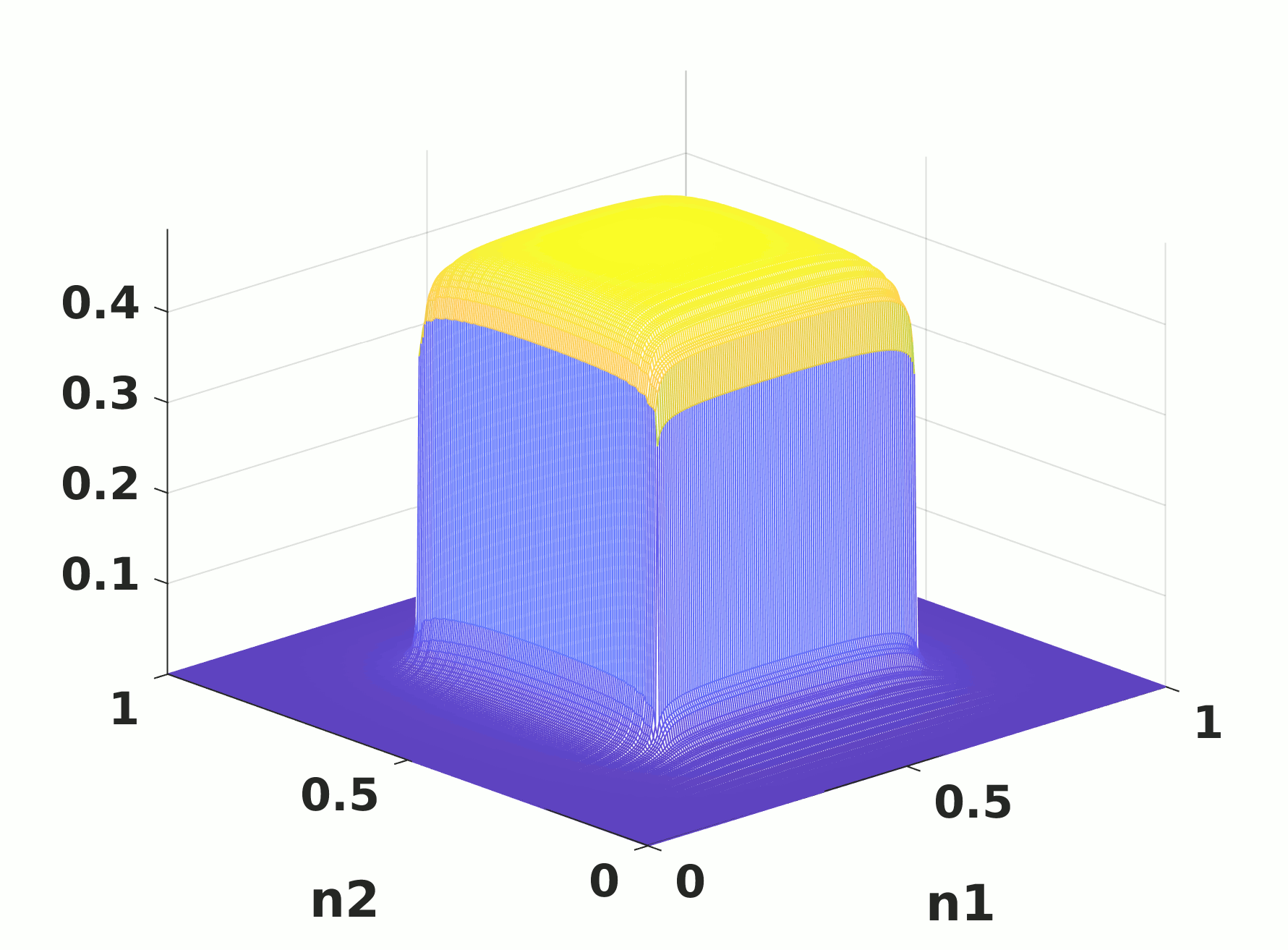}}
 	\caption[Test 2: Lösungen $u$ mit $\alpha = 1$]{Impact of fractional exponent $\alpha$ on solutions $\uu$ 
 	of (\ref{eqn:Lagrange_cont}) for box-type right hand side $\yom$, $\gamma = 1$.}
 	\label{Fig2D:u_square}
 \end{figure}
 
  \begin{figure}[H]
 	\subfigure[$\alpha = 1$]{\includegraphics[width=0.32\textwidth]{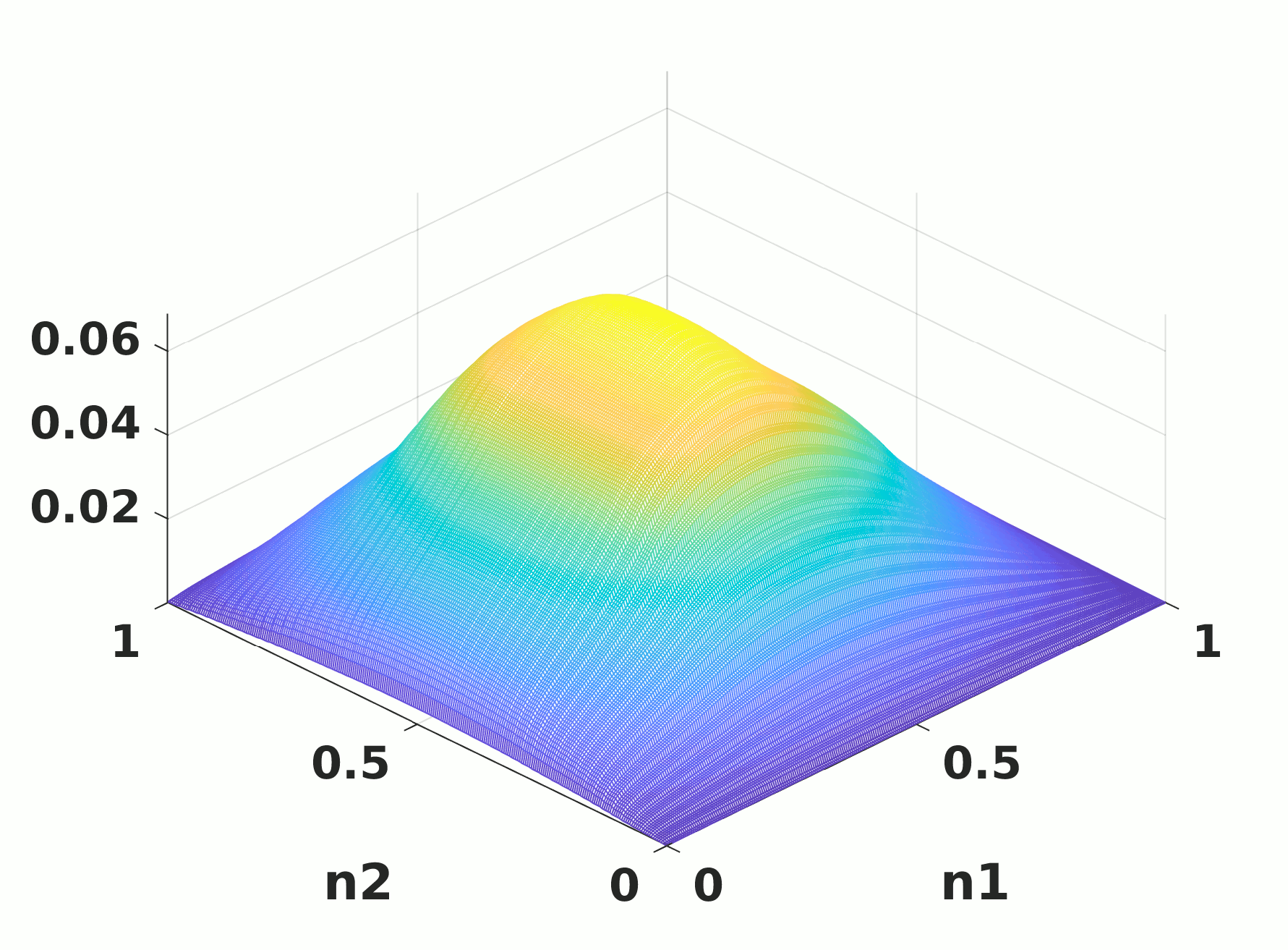}}
 \subfigure[$\alpha = 1/2$]{\includegraphics[width=0.32\textwidth]{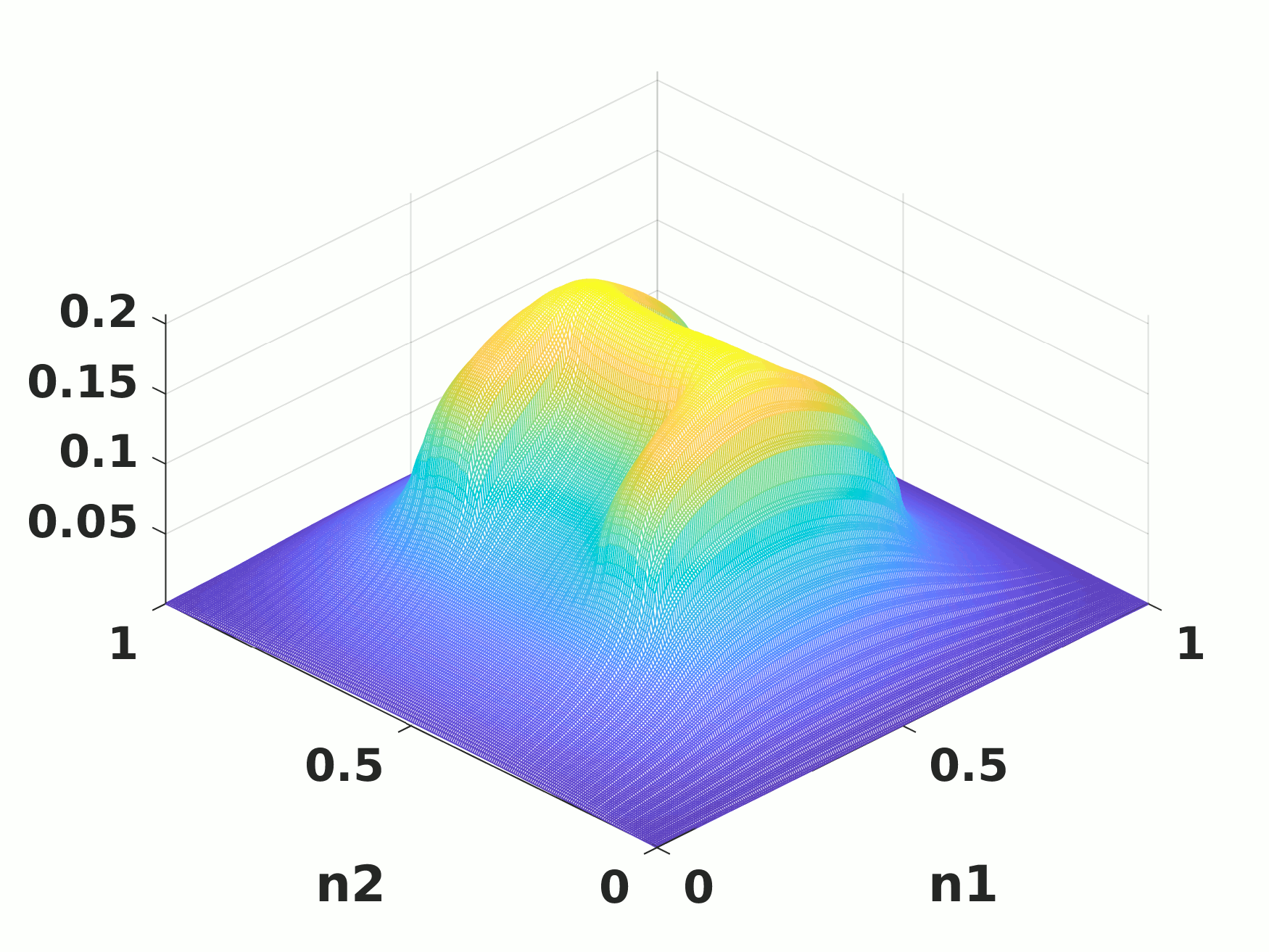}}
 	\subfigure[$\alpha = 1/10$]{\includegraphics[width=0.32\textwidth]{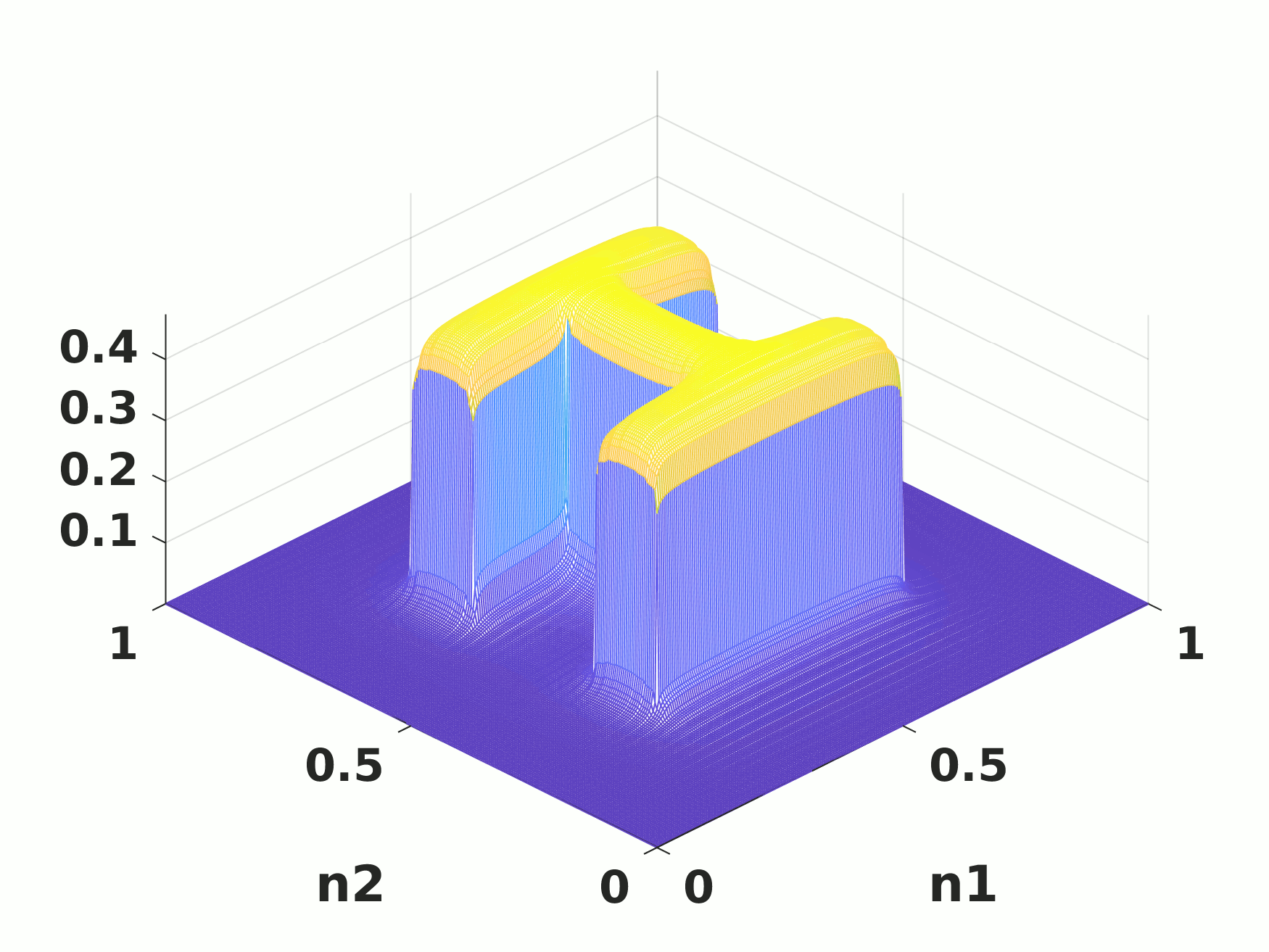}}
 	\caption[Test 2: Lösungen $u$ mit $\alpha = 1/10$]{Impact of fractional exponent $\alpha$ on solutions $\uu$ 
 	of (\ref{eqn:Lagrange_cont}) for $H$-type right hand side $\yom$, $\gamma = 1$.}
 	\label{Fig2D:u_h}
 \end{figure}   
 
   \begin{figure}[H]
 	\subfigure[$\alpha = 1$]{\includegraphics[width=0.32\textwidth]{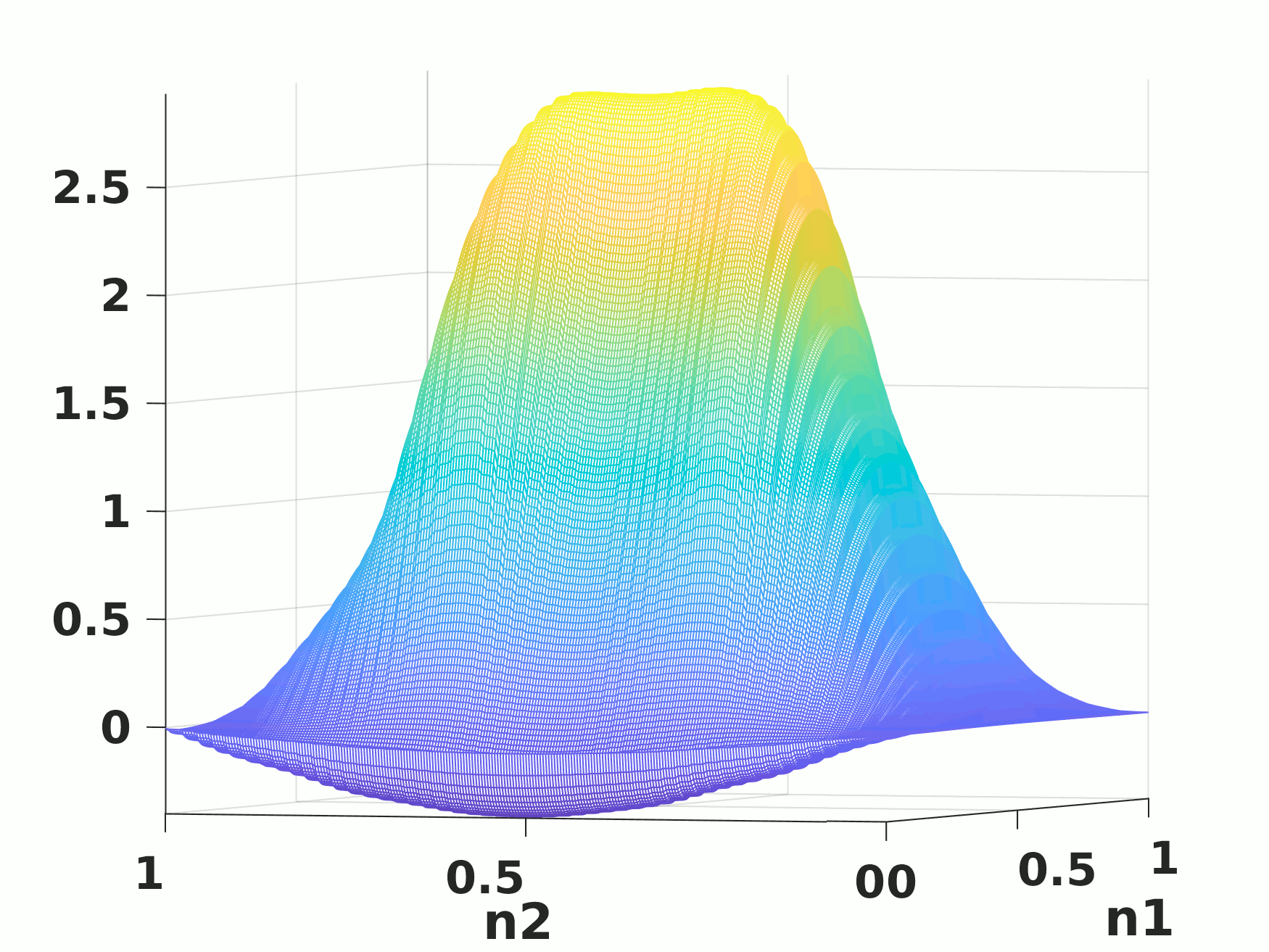}}
 	\subfigure[$\alpha = 1/2$]{\includegraphics[width=0.32\textwidth]{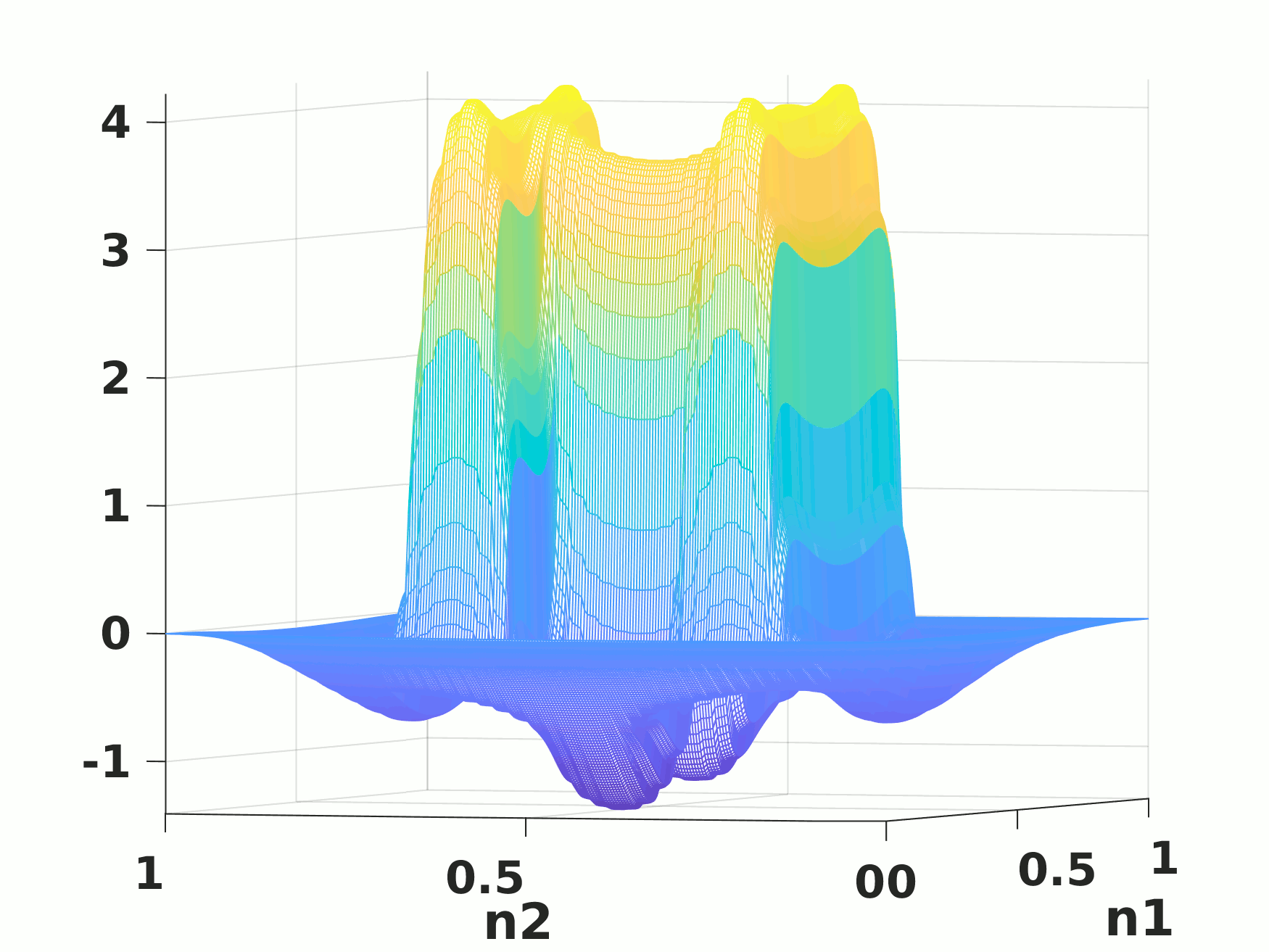}}
 	\subfigure[$\alpha = 1/10$]{\includegraphics[width=0.32\textwidth]{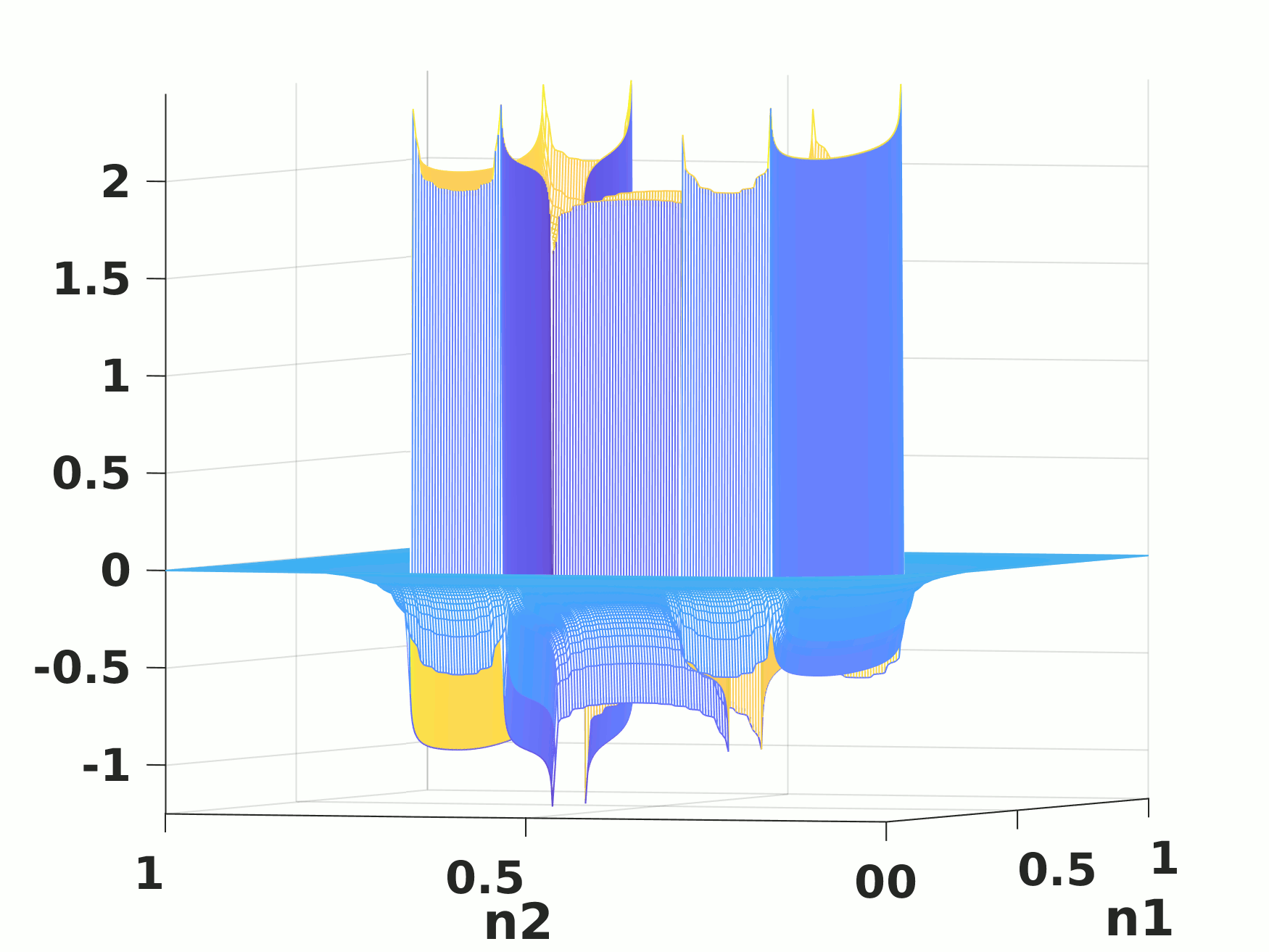}}
 	\caption[Test 2: Lösungen $u$ mit $\alpha = 1/10$]{Impact of small regularization parameter 
 	$\gamma = 0.01$ on solutions $\uu$ for $H$-type design function $\yom$ and different values for $\alpha$. }
 	\label{Fig2D:u_h_gam2}
 \end{figure}

 \subsubsection{Solution for State Variable}
 
   In this section, the solution for the state variable $\y$ is presented, 
   that is we solve equation (\ref{eqn:State}), 
   $$ \y = A^{-\alpha} \uu, 
   $$ 
   where $\uu$ is the solution of (\ref{eqn:Lagrange_cont}) presented in section \ref{subsubsec:u}. 
  Analogously, we consider the grid size $n = 255$ and compare the effects of different fractional exponents 
  $\alpha$ and regularization parameter values $\gamma = 1$ (figures \ref{Fig2D:y_square} and \ref{Fig2D:y_h}) 
  and $\gamma = 0.01$ (figure \ref{Fig2D:y_h_gam}).
   \vspace*{-3mm}
 \begin{figure}[H]
 	\subfigure[$\alpha = 1$]{\includegraphics[width=0.32\textwidth]{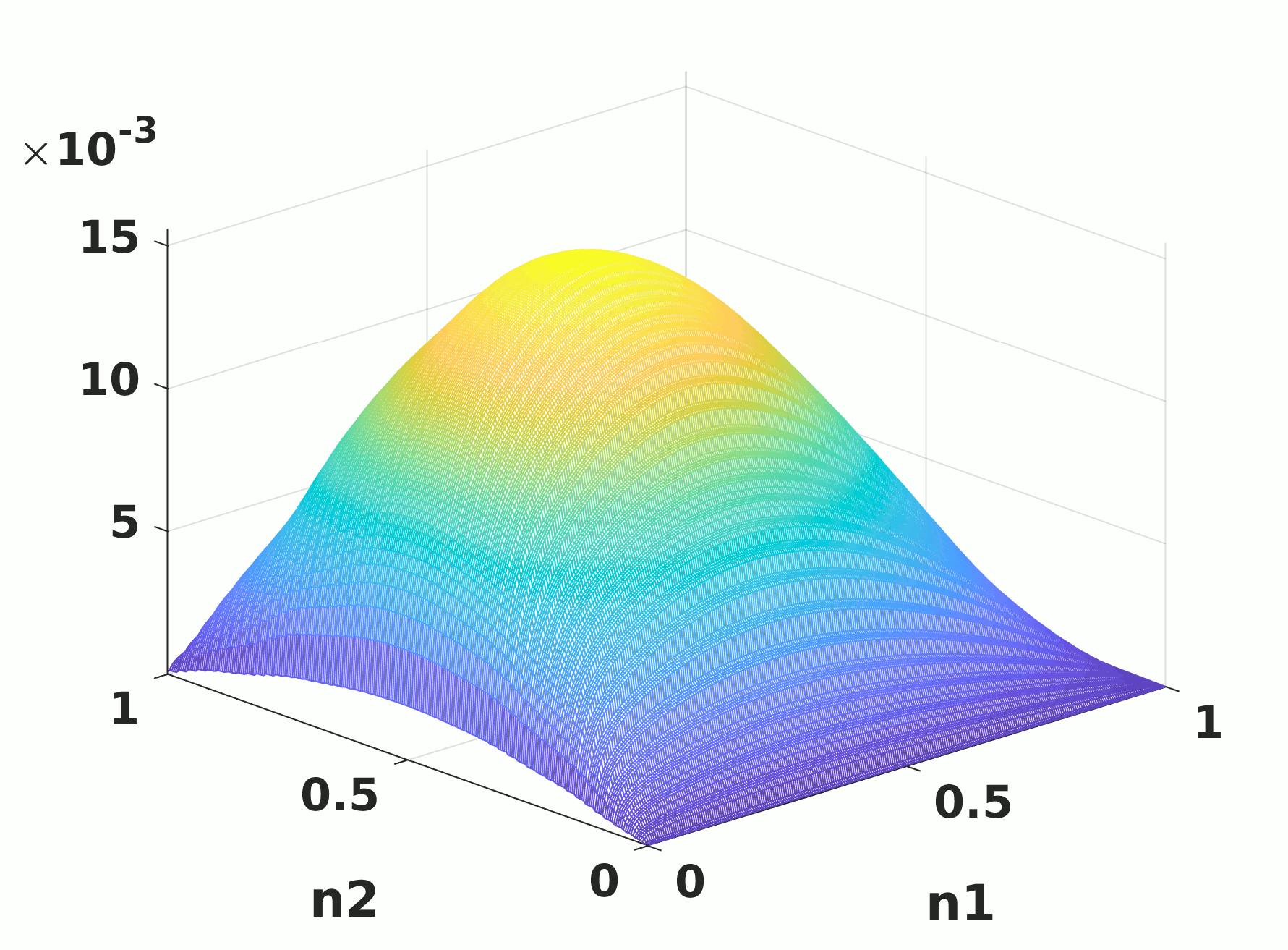}}
 	\subfigure[$\alpha = 1/2$]{\includegraphics[width=0.32\textwidth]{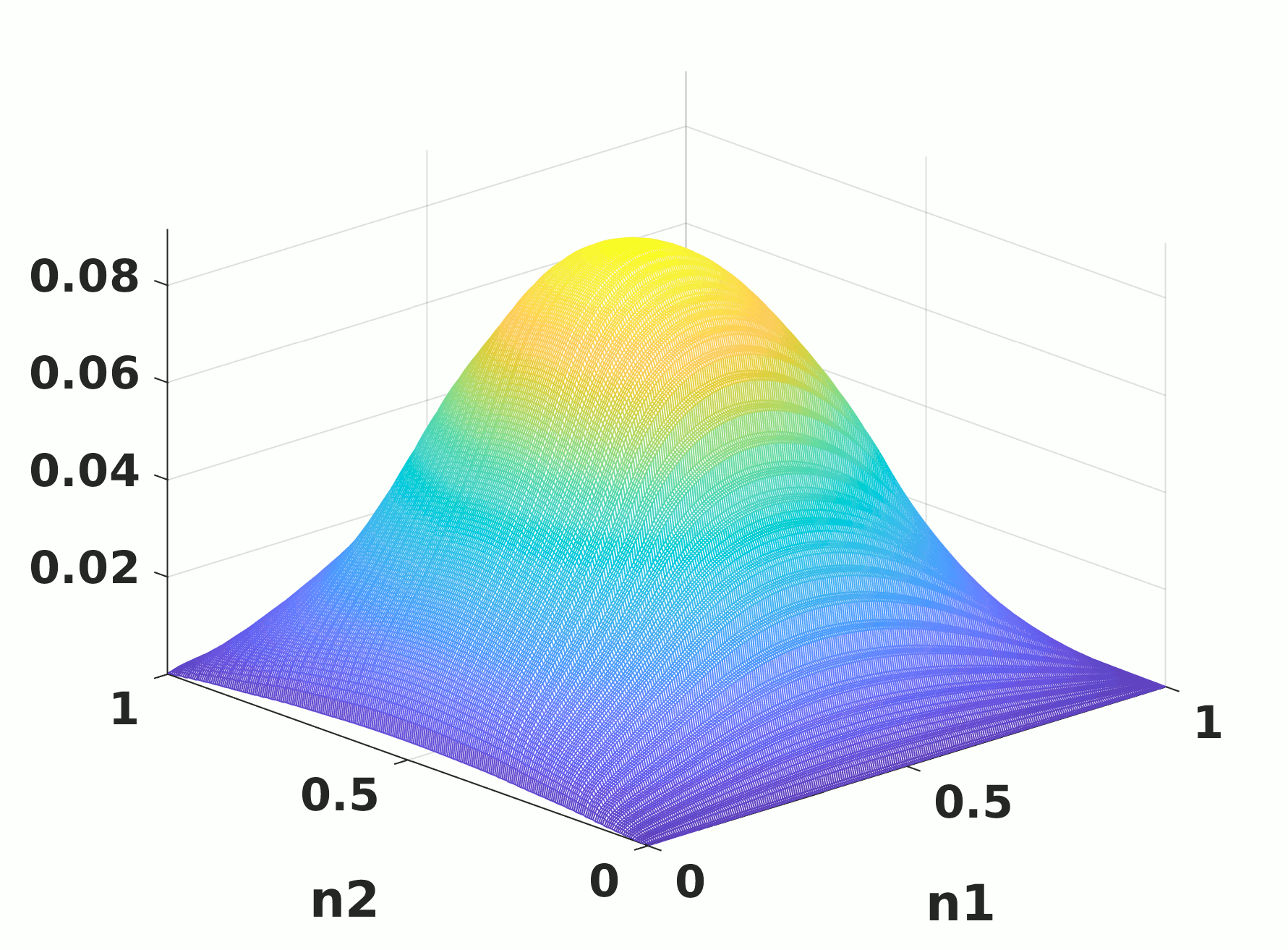}}
 	\subfigure[$\alpha = 1/10$]{\includegraphics[width=0.32\textwidth]{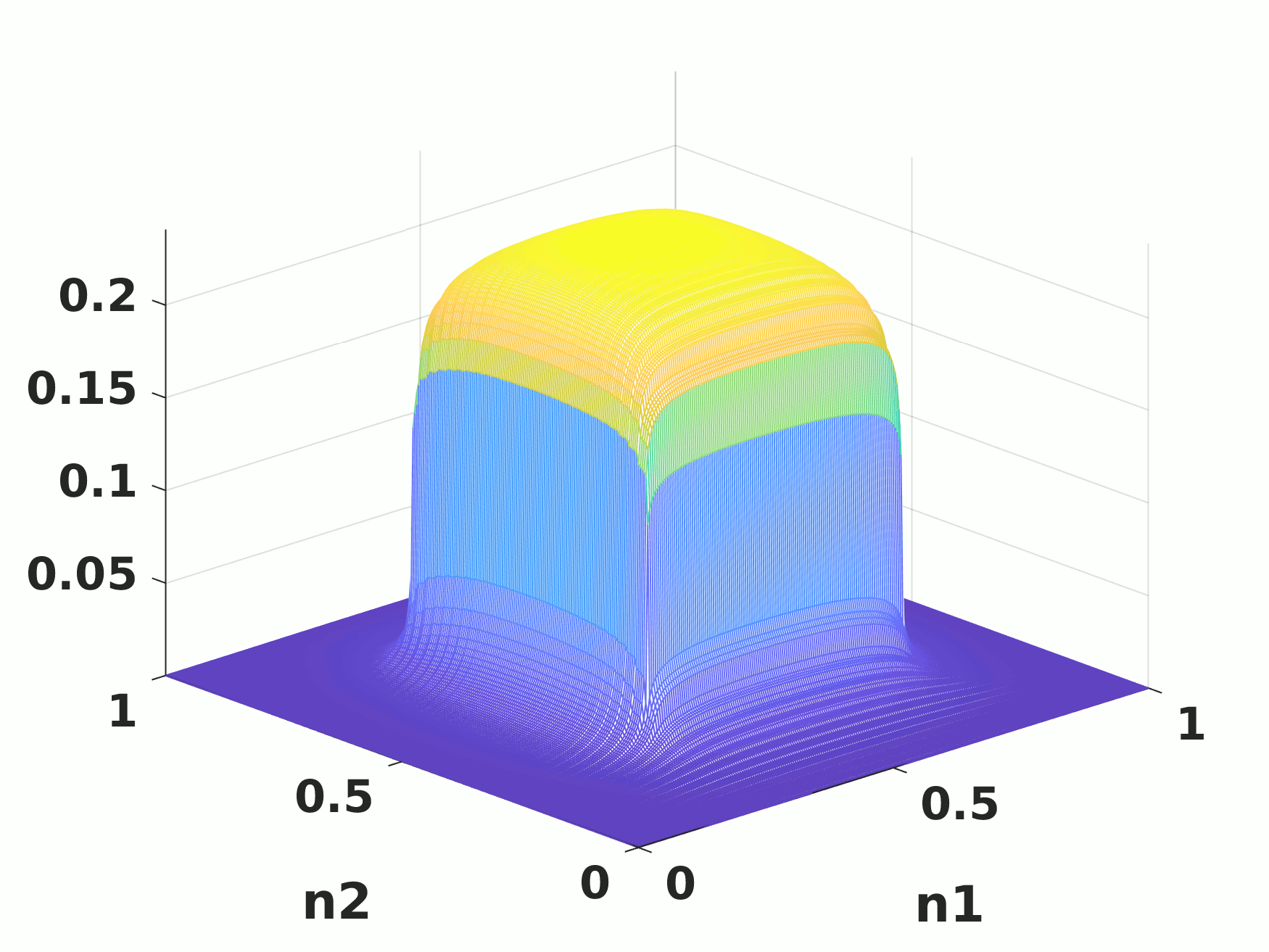}}
 	\caption[Test 2: Lösungen $u$ mit $\alpha = 1/10$]{Impact of fractional exponent $\alpha$ on solutions $\y$ 
 	of (\ref{eqn:State}) for box-type right hand side $\yom$, $\gamma = 1$. }
 	\label{Fig2D:y_square}
 \end{figure}     
 \vspace*{-9mm}
 \begin{figure}[H]
 	\subfigure[$\alpha = 1$]{\includegraphics[width=0.32\textwidth]{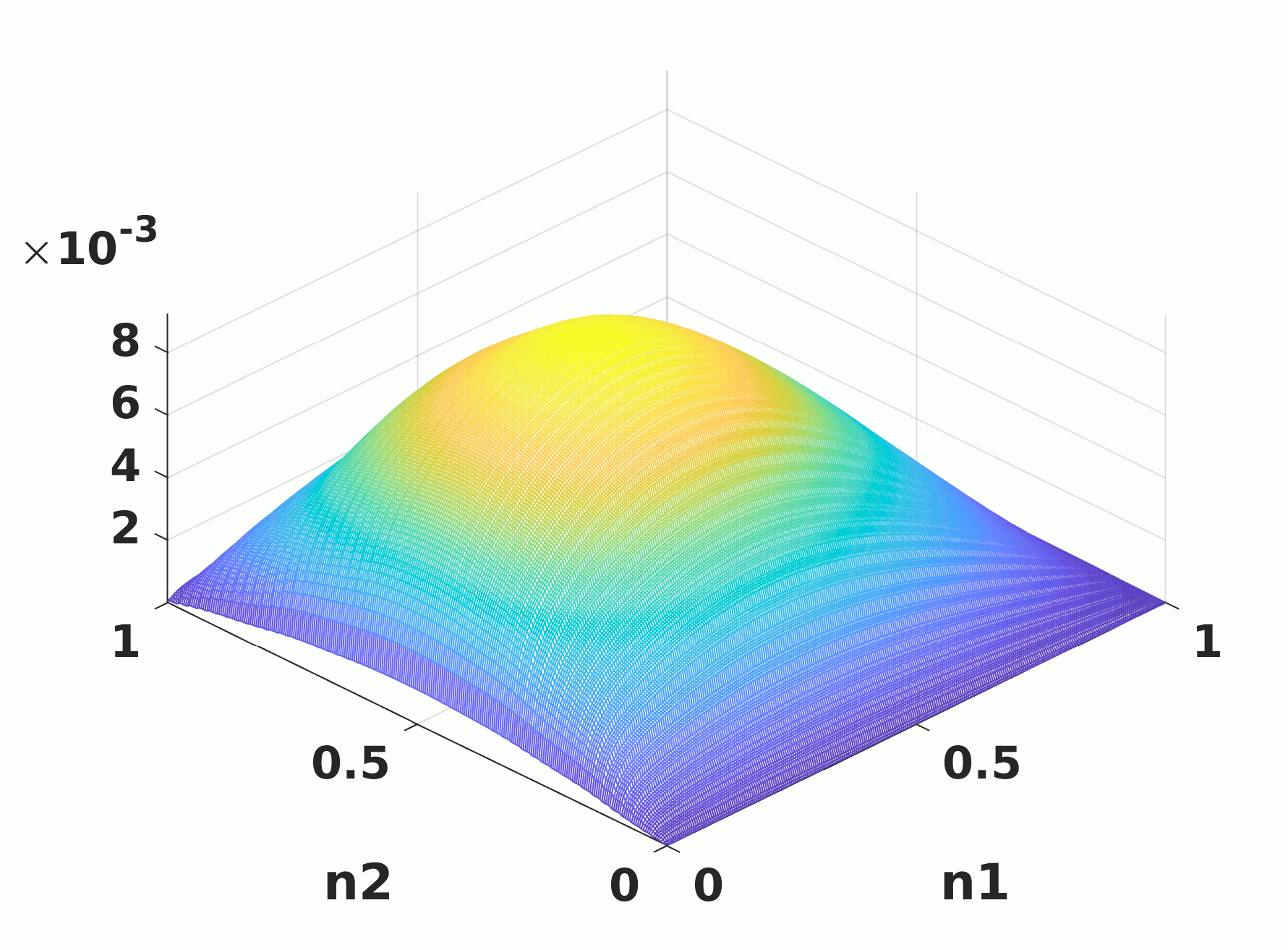}}
 	\subfigure[$\alpha = 1/2$]{\includegraphics[width=0.32\textwidth]{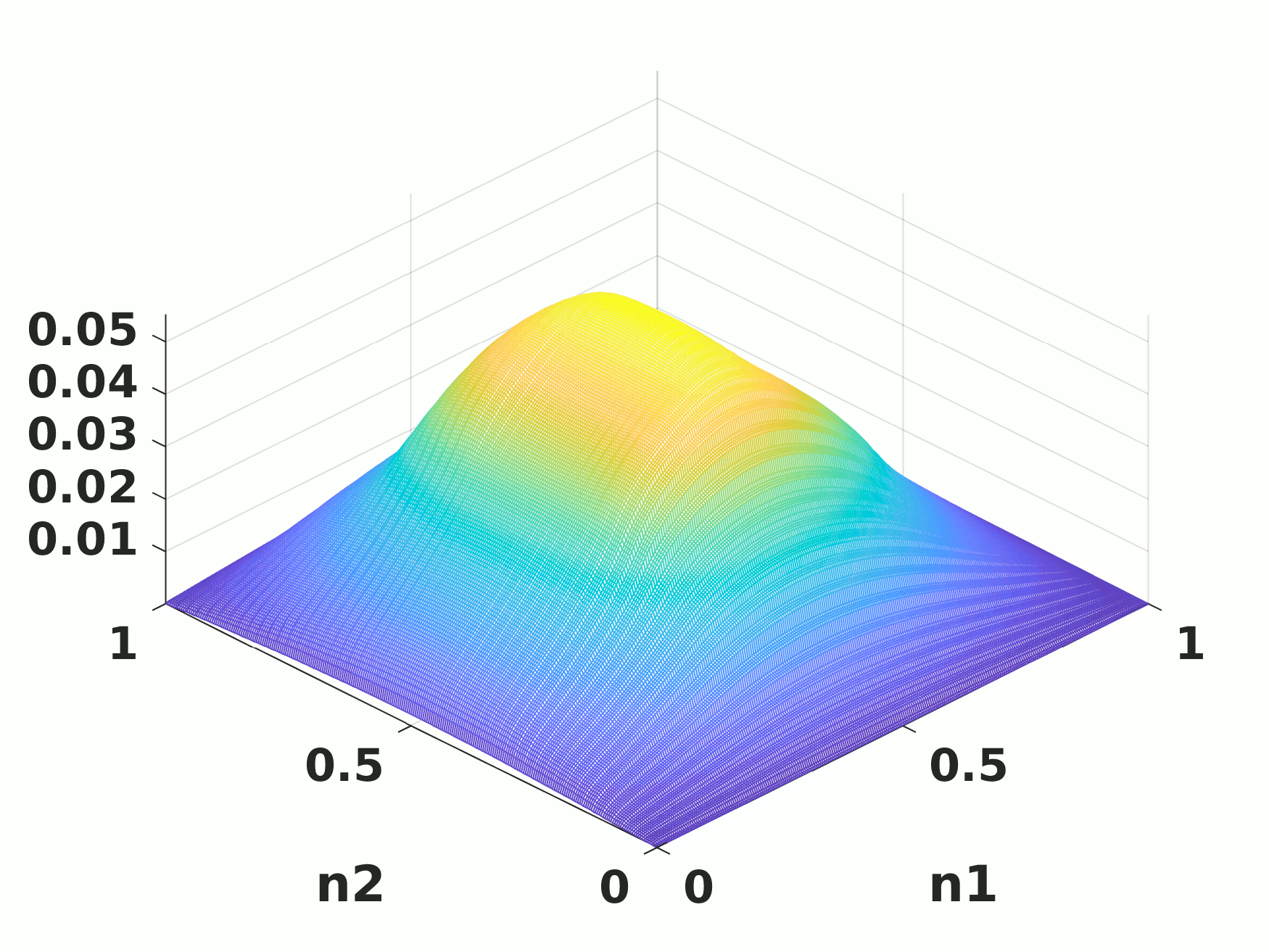}}
 	\subfigure[$\alpha = 1/10$]{\includegraphics[width=0.32\textwidth]{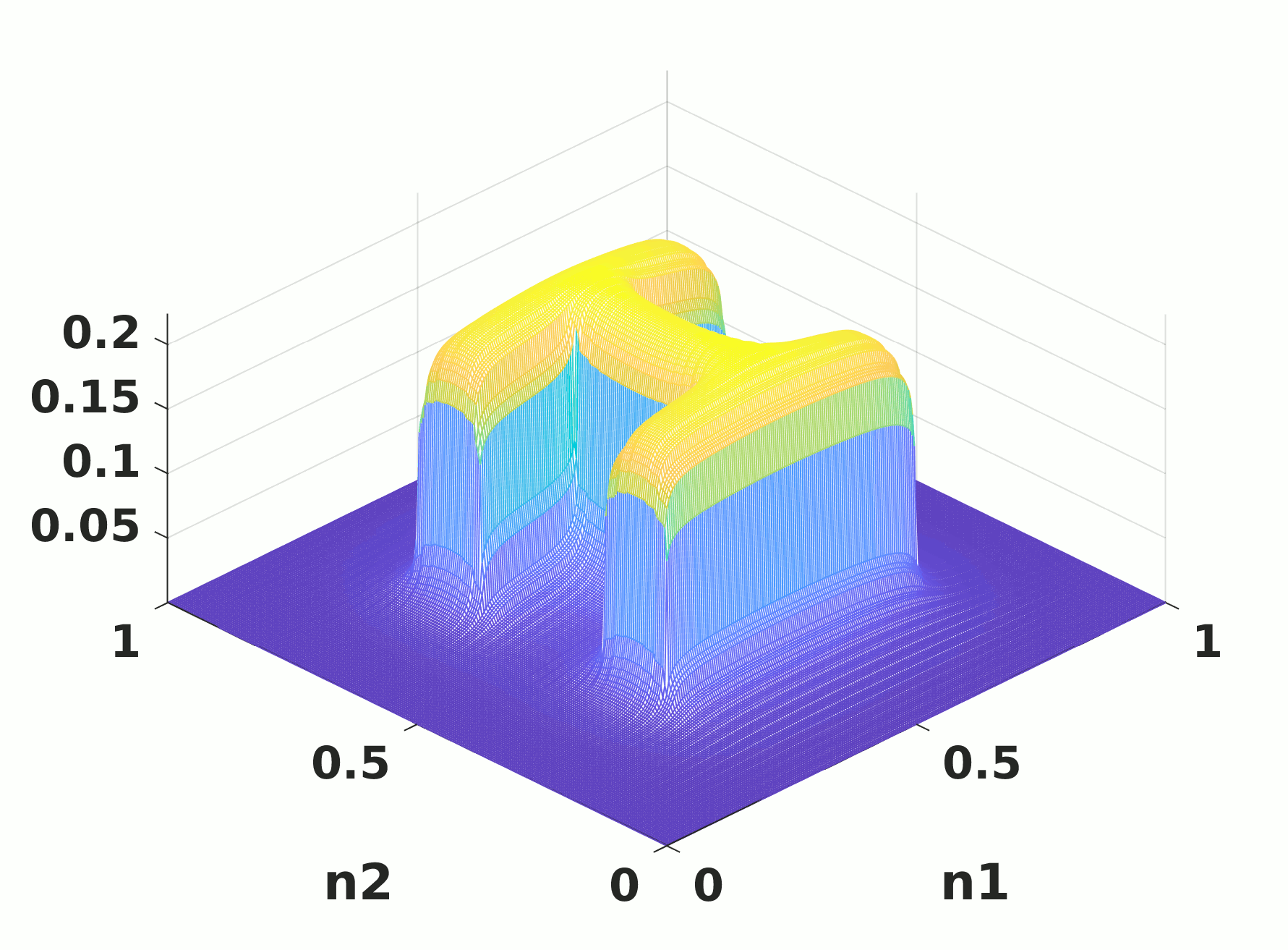}}
 	\caption[Test 2: Lösungen $u$ mit $\alpha = 1/2$]{Impact of fractional exponent $\alpha$ on solutions $\y$ 
 	of (\ref{eqn:State}) for $H$-type right hand side $\yom$, $\gamma = 1$. }
 	\label{Fig2D:y_h}
 \end{figure}

 \begin{figure}[H]
 	\subfigure[$\alpha = 1$]{\includegraphics[width=0.32\textwidth]{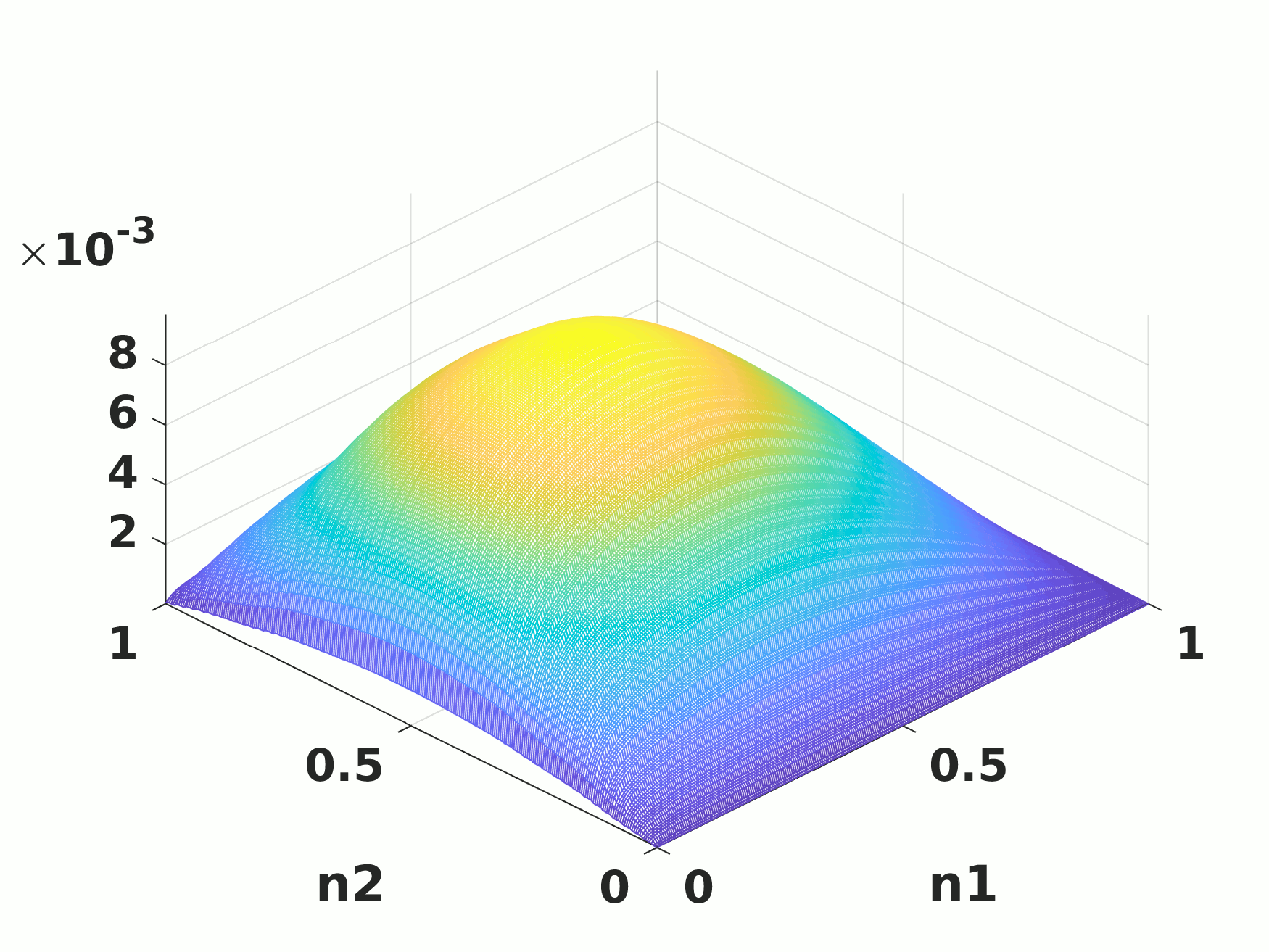}}
 	\subfigure[$\alpha = 1/2$]{\includegraphics[width=0.32\textwidth]{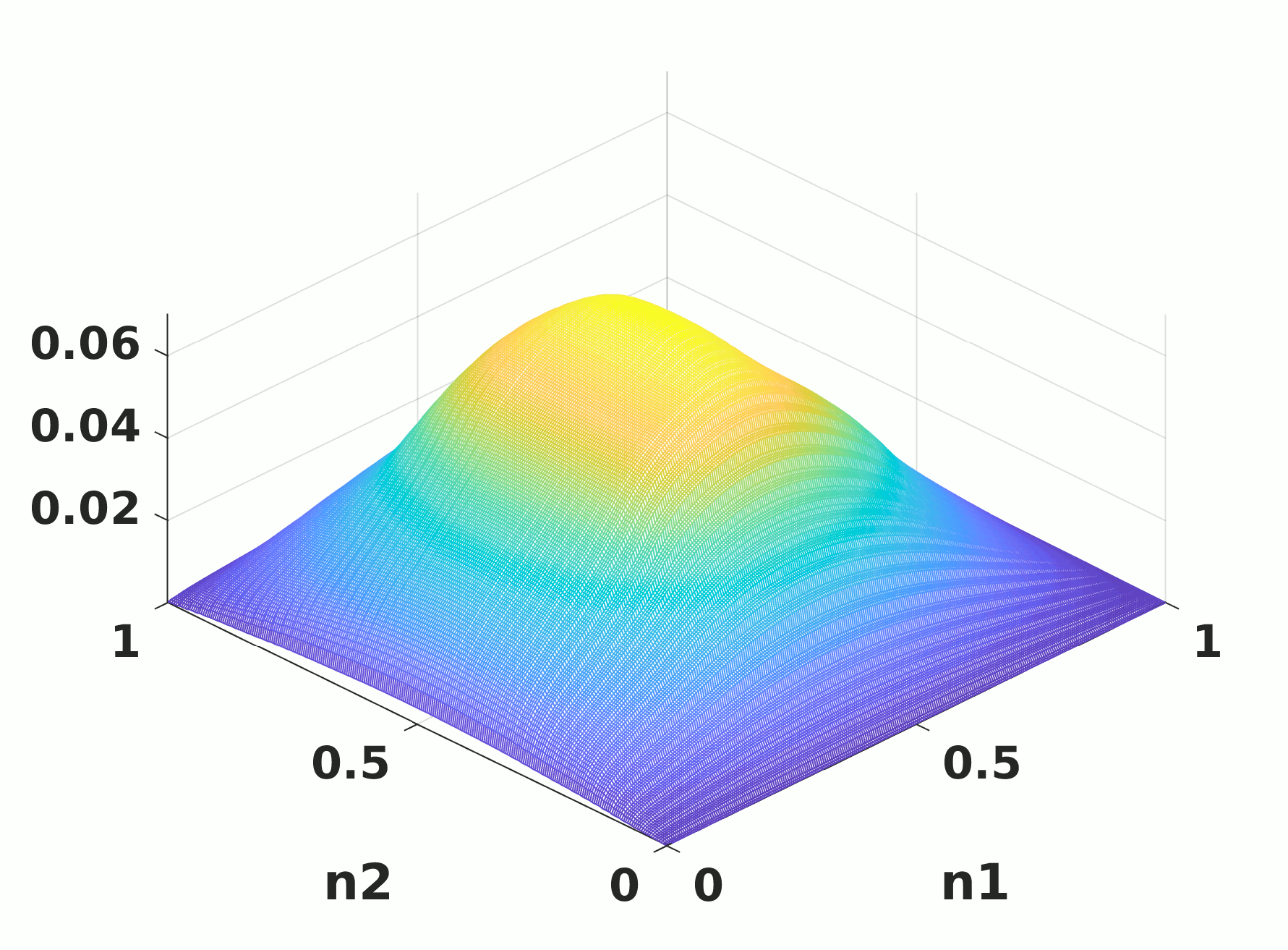}}
 	\subfigure[$\alpha = 1/10$]{\includegraphics[width=0.32\textwidth]{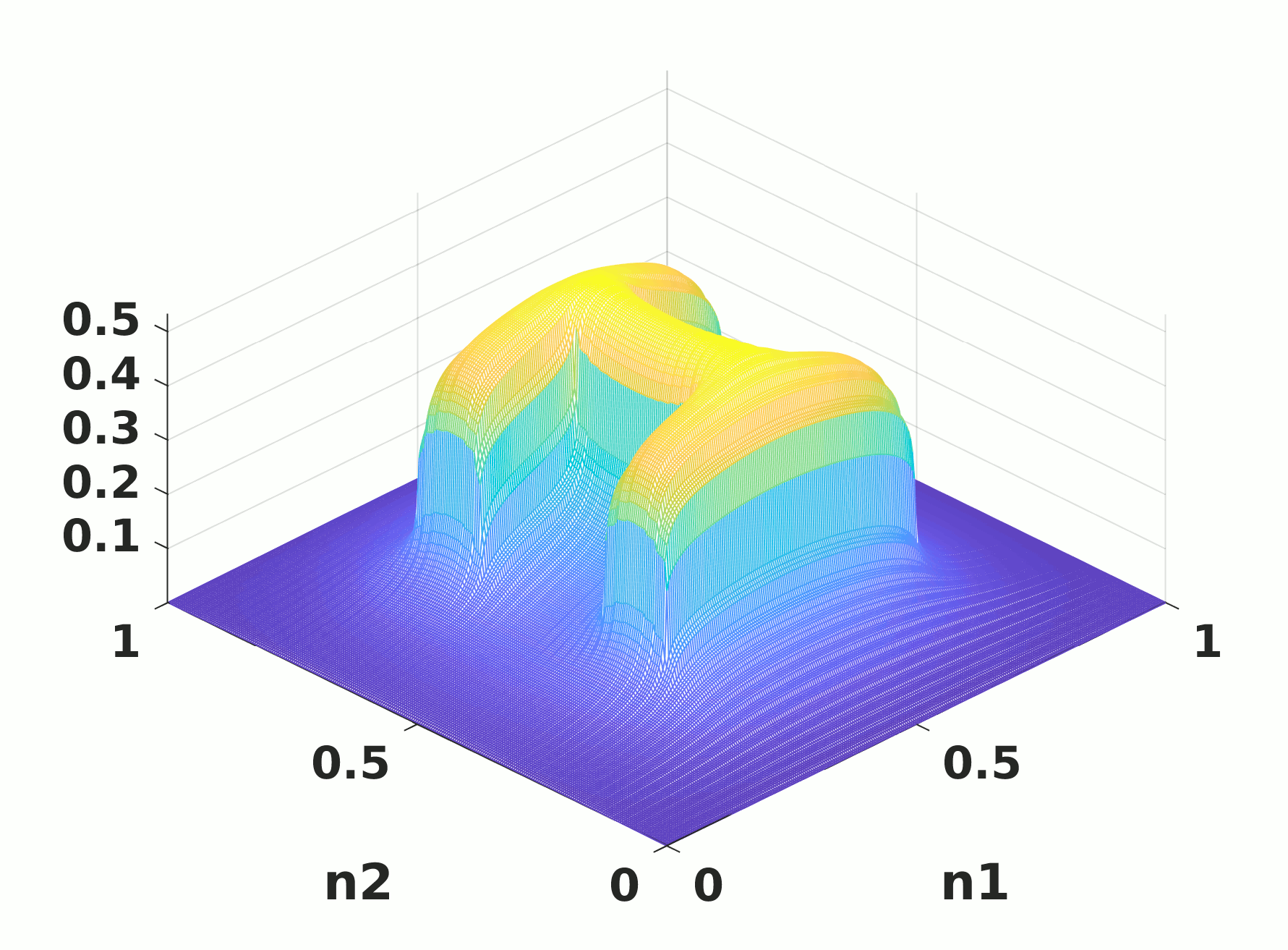}}
 	\caption[Test 2: Lösungen $u$ mit $\alpha = 1/10$]{Impact of small regularization parameter $\gamma = 0.01$ 
 	on solutions $\y$ for $H$-type design function $\yom$ and different values for $\alpha$. }
 	\label{Fig2D:y_h_gam}
 \end{figure}

 \subsection{Numerical tests for 3D case}
\label{ssec:numerics_tensor_3D}
 
In this section the numerical results for the 3D case are presented. 
We first  consider the 
computational time and then discuss the storage complexity of the algorithm.  Further, 
we present 3D solutions for equations (\ref{eqn:Lagrange_cont}) and (\ref{eqn:State}), 
that is the results of the
numerical simulations for the optimal control $\uu$ and the state $\y$, respectively. 
In these calculations, we use a grid size   of $n = 127$ grid points in 
each dimension, the regularization parameter $\gamma = 1$, and different values for $\alpha = 1, 1/2, 1/10$. 
As stated previously, we use the operator 
$
 (\Delta^\alpha + \Delta^{-\alpha})^{-1}
$
in a low-rank format as preconditioning operator, where $\Delta$ is the classic negative Laplace 
operator (see section \ref{ssec:Precond} for details). 

\subsubsection{Complexity results}
\label{ssec:complexity_3d}
%
First, we investigate the time the low-rank pcg scheme needs to solve equation (\ref{eqn:Lagrange_cont}), 
$$(A^\alpha + A^{-\alpha})\uu = \yom $$ for the $H$-type right hand side $\yom$.

Table \ref{Tab:times_3d} shows the resulting computational times (in seconds) the pcg algorithm needs in total 
to solve (\ref{eqn:Lagrange_cont}), considering different numbers of grid points, different values for  $\alpha$ 
and aforementioned coefficient functions (\ref{fun:a1}) -- (\ref{fun:a3}).

Figure \ref{Fig:IterTimes} represents the corresponding time per iteration in the presented test, resulting 
from tables \ref{Tab:times_3d} and \ref{Tab:iter_3d}. The presented data confirms the computational 
complexity of $O(RSn^2)$ (see section \ref{ssec:StiffnessMatrixinLR}) and therefore proves that the used 
low-rank structures within the pcg scheme contribute to circumventing the course of dimensionality efficiently.
 \begin{table}[H]
\begin{center}%
{\footnotesize
\begin{tabular}
[c]{|c|c|c|c| }%
\hline 
  grid points   &    $\alpha=1$ &  $\alpha=1/2$ & $\alpha=1/10$ \\
  \hline
 64    & 28.1 & 16.0 & 2.53   \\
  \hline 
 128  & 92.8 & 43.7 & 7.15 \\
     \hline 
 256  &  318.0 & 125.0 & 22.5 \\
 \hline
512 &1180.0 & 512.0 & 66.8 \\
     \hline 
  \end{tabular}
\caption{\small Times in seconds needed by the rank-structured solver to 
solve $(A^\alpha+A^{-\alpha})\uu=\yom$ 
with different numbers of grid points and different values $\alpha$. 
The $H$-type design function is considered.}
\label{Tab:times_3d}
}
\end{center}
\end{table}
\vspace*{-10mm}
\begin{figure}[H]
\centering
 	\includegraphics[width=0.4\textwidth]{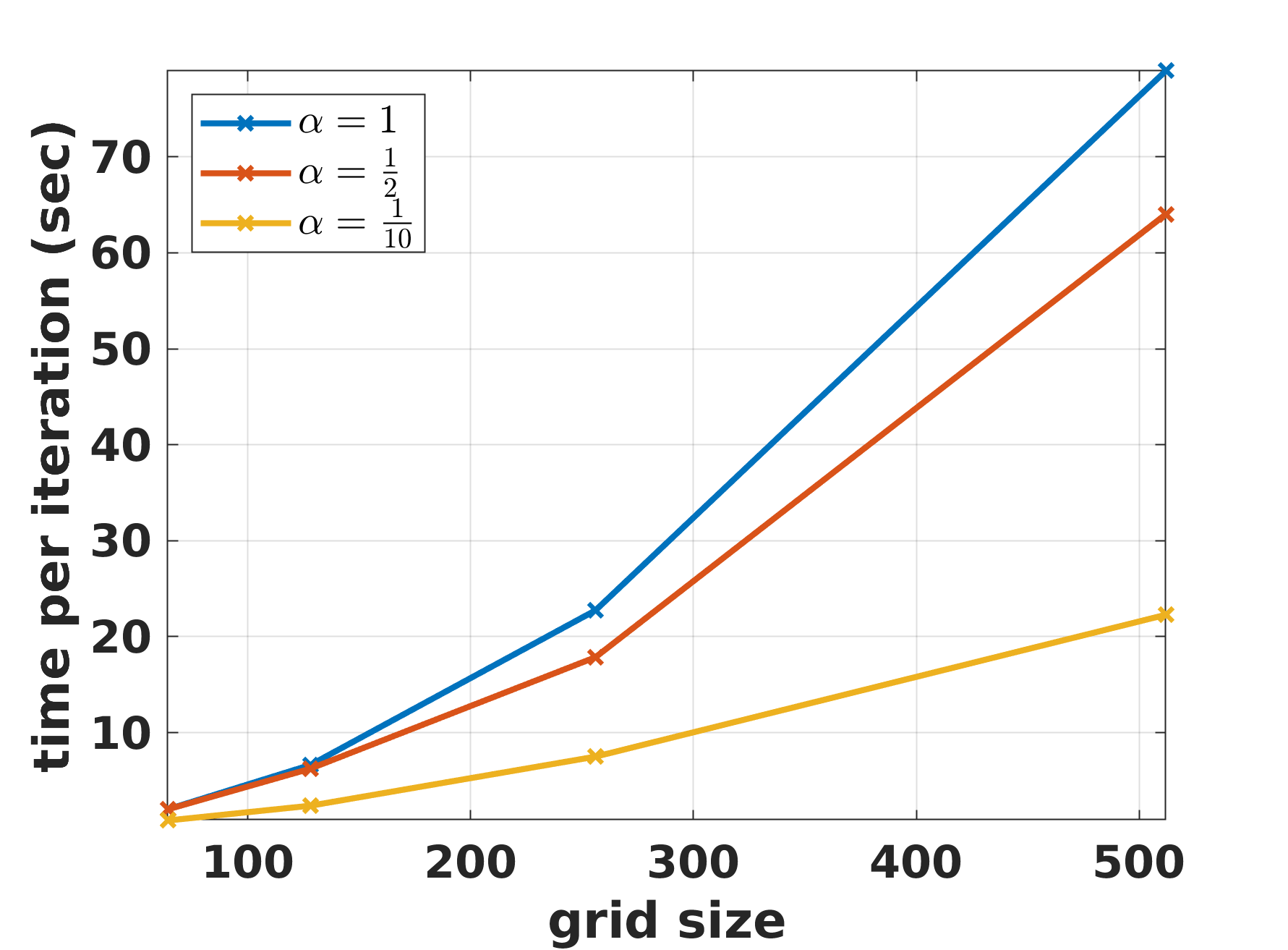}
 	\caption{Times per iterations of the pcg algorithm for different grid sizes and different 
 	values of $\alpha$ 
 	considering the $H$-type design function. }
 	\label{Fig:IterTimes}
 \end{figure}  

Table \ref{Tab:iter_3d} shows the number of iterations the algorithm needs to solve 
equation (\ref{eqn:Lagrange_cont}) 
for different numbers of grid points as well as different values for $\alpha$. 
Analogously to the 2D case, 
the data varifies that the algorithm provides a solution scheme for the investigated problem 
class whose number of iterations is independent of the number of grid points.\\
\vspace*{-5mm}
\begin{table}[H]
\begin{center}%
{\footnotesize
\begin{tabular}
[c]{|c|c|c|c| }%
\hline 
  grid points   &    $\alpha=1$ &  $\alpha=1/2$ & $\alpha=1/10$ \\
  \hline
 64     & 14 & 8 & 3\\ 
  \hline 
 128  & 14 & 7 & 3 \\ 
     \hline 
 256    & 14 & 7 & 3\\ 
 \hline
512 &  15 & 8 & 3\\
     \hline 
  \end{tabular}
\caption{\small Numbers of iterations for the low-rank solver for solution of $(A^\alpha+A^{-\alpha})\uu=\yom$ 
with different numbers of grid points and different values of $\alpha$. 
The $H$-type design function is considered.}
\label{Tab:iter_3d}
}
\end{center}
\end{table}

\subsubsection{Effects of anisotropic preconditioning}

In our tests, we use coefficient functions $a_1(x_1), a_2(x_2)$, and $a_3(x_3)$ as defined 
in \eqref{fun:a1} -- \eqref{fun:a3}. 
As stated in Theorem \ref{lem:cond_bound_var}, the condition number of the preconditioned operator is estimated 
with the help of majorants and minorants of the coefficient functions involved in the problem. In our numerical test, 
the minorant corresponding to the coefficient function $a_2(x_2)$ tends to be zero, which then might lead 
to a unfavorable condition tending to infinity (see condition estimate in section \ref{ssec:Precond}). 
However, our numerical tests do not suffer from this fact, as the inverse term in operator $A_u$ regularizes 
the spectrum. \\
Nevertheless, we define a modified coefficient $$\tilde{a}_2 := a_2 + 0.1$$
in order to change our numerical test slightly (which also circumvents any problems that might arise due to 
a bad condition) and to see the effects on the time complexity for solving equation 
\eqref{eqn:Lagrange_cont}. 
We also test the effect of using the anisotropic Laplacian \eqref{eqn:PrecClassA}, 
\[
{B}_0= \beta {B}^{-\alpha} + \tfrac{\gamma}{\beta}{B}^{\alpha}, \]
where $B$ is the discretization of $
{\cal B}  :=  -\sum_{\ell=1}^d \frac{\partial}{d x_\ell} b_0^\ell \frac{\partial}{d x_\ell} $,
used as preconditioning operator instead of the classic Laplacian.  During the test, we use coefficient 
functions $a_1,\tilde{a}_2, \text{and} \ a_3$ and choose 
\begin{equation} \label{eqn:modcoeff} 
{b}_0^{\ell}=
\frac{1}{2}(\max a_\ell(x) + \min a_\ell(x)  ) ,\quad \ell=1,2,3, 
\end{equation} 
as scaling constant coefficients. \\
Table \ref{Tab:iternums_precond_3d} shows the results of both tests: 
The left table represents the number of iterations needed by the pcg scheme for 
solving \eqref{eqn:Lagrange_cont} when using the same setting as in section 
\ref{ssec:complexity_3d} but replacing $a_2$ by $\tilde{a}_2$. Note that the number of iterations 
can be reduced for $\alpha = 1$ and $\alpha = 1/2$. \\
The right table shows the number of iterations for the same setting as in section \ref{ssec:complexity_3d} 
but replacing $a_2$ by $\tilde{a}_2$ \textit{and} using the anisotropic Laplacian \eqref{eqn:PrecClassA} 
with coefficients defined in \eqref{eqn:modcoeff} as preconditioner. Again the number of iterations 
needed to solve \eqref{eqn:Lagrange_cont} decreases for $\alpha = 1$ and $\alpha = 1/2$. \\
In both cases, we notice an improvement concerning the time complexity and still observe a grid 
independent number of iterations in order to solve the problem.  
\begin{table}[H]
\begin{minipage}{0.49\linewidth}
\centering 
{\footnotesize
\begin{tabular}
[c]{|c|c|c|c| }%
\hline 
  grid points   &    $\alpha=1$ &  $\alpha=1/2$ & $\alpha=1/10$ \\
  \hline
 64     & 11 & 7 & 3\\ 
  \hline 
 128  & 11 & 6 & 3 \\ 
     \hline 
 256    & 11 & 6 & 3\\ 
 \hline
512 &  11 & 6 & 3\\
     \hline 
  \end{tabular}
  }
\end{minipage}
\begin{minipage}{0.49\linewidth}
\centering
{\footnotesize
\begin{tabular}
[c]{|c|c|c|c| }%
\hline 
  grid points   &    $\alpha=1$ &  $\alpha=1/2$ & $\alpha=1/10$ \\
  \hline
 64     & 9 & 6 & 3\\ 
  \hline 
 128  & 8 & 6 & 3 \\ 
     \hline 
 256    & 8 & 6 & 3\\ 
 \hline
512 &  9 & 5 & 3\\
     \hline 
  \end{tabular}
 }
\end{minipage}
\caption{\small Numbers of iterations for the low-rank solver for soultion of $(A^\alpha + A^{-\alpha})\uu = \yom$. 
\textit{Left:} coefficient functions $a_1,\tilde{a}_2, \text{and} \ a_3$, classic Laplacian as preconditioner. 
\textit{Right:} coefficient functions $a_1, \tilde{a}_2, \text{and} \ a_3$, 
anisotropic Laplacian as preconditioner. The $H$-type design function is considered}
\label{Tab:iternums_precond_3d}
\end{table}

 \begin{remark}
From above tables concerning the time complexity, we observe that equation 
(\ref{eqn:Lagrange_cont}) can be solved fastest for small $\alpha \rightarrow 0$. 
This effect is due to the fact that for small values for $\alpha$, both the operators 
$A^\alpha$ and $A^{-\alpha}$  approach the Identity matrix \textbf{\MakeUppercase{\romannumeral 1}}, 
so that ultimately only an equation similar to \textbf{\MakeUppercase{\romannumeral 1}}$\uu = \y_\Omega$ 
has to be solved. 
\end{remark}


 It is worth to note that in 3D case the main bottleneck for the numerical treatment of nonlocal 
 operators is the dramatic storage growth for the corresponding fully populated 
 $n^3\times n^3$ stiffness matrices  as the volume size $n^3$ of the discretization grid increases. 
 To point out further advantages of our low-rank scheme compared to a full format algorithm, 
 we investigate the  costs for storing the operator $A$ of type (\ref{eqn_low_rank_lap})
 as well as for computing and storing the fractional operators $A^\alpha$ and $A^{-\alpha}$ in both 
 a low-rank format and a full format.  
 Figure \ref{Fig:StorageComplexity} shows the respective required storage to store the discrete 
 operator $A$ for both tests.  
 
   \begin{figure}[h]
\centering
 	\includegraphics[width=0.4\textwidth]{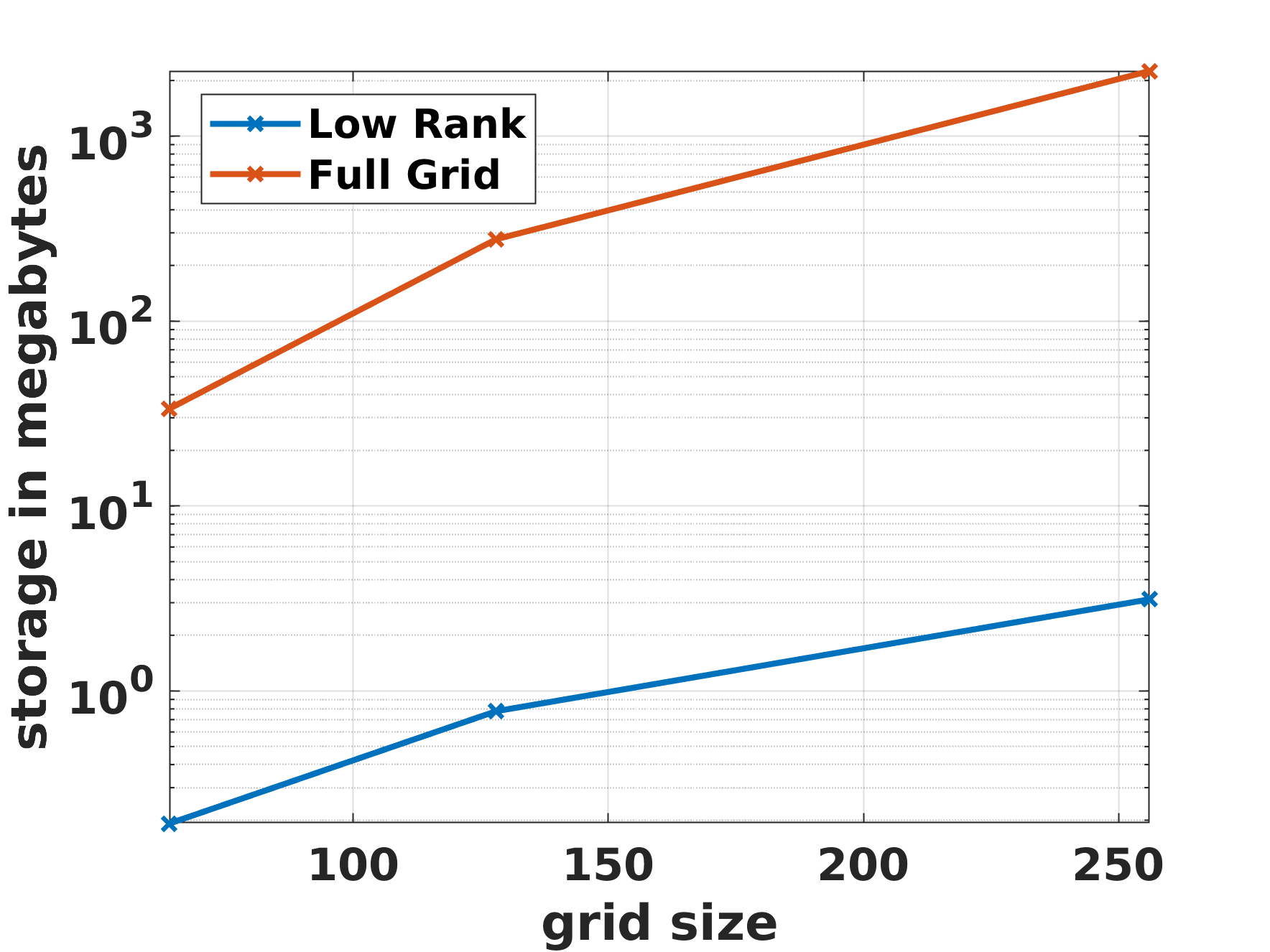}
 	\caption{Storage complexity needed to store operator $A$ when using a full format scheme and when 
 	using the low-rank scheme.}
 	\label{Fig:StorageComplexity}
 \end{figure}  
 If we worked with the full format tensor in the given problem, we would have to store a 
 tensor (matrix) of size $n^d \times n^d$ which means we would end up with a an polynomial 
 storage scaling in the number of grid points, which can only be handled by saving the operator 
 in a sparse format. However, in order to compute $A^\alpha$ and its inverse $A^{-\alpha}$, 
 the full format tensor is needed. As a consequence, their computations in our test exceed 
 the storage capacity of the used laptop (16GB RAM) for $n \ge 32$ grid points in each dimension. 
 As a result, the problem cannot be solved in a full format scheme with large grids.
  
However, in low-rank format we follow the computation and storage scheme presented in chapter 
\ref{ssec:StiffnessMatrixinLR}, that is we compute a factorized form (\ref{eqn:DiagFGen2D}) 
of $A^\alpha$ and $A^{-\alpha}$ with the help of the eigenvalue decomposition. 
Thereafter, we compute $\mathcal{F}(\Lambda)$ in a canonical format (\ref{eqn:CanFormat}) that has 
a computational cost of order $O(n^d)$, but only has to be computed once before the algorithm starts. 
Finally, we end up with a storage cost for the canonical format of order $O(dRn)$ which is only linear 
in the number of grid points.

All in all we stress that the given problem can only be solved with controllable precision 
by efficiently using low-rank structures of the involved operators.

\subsubsection{Solutions for optimal control}
\label{sssec:3DU}

Figures \ref{Fig3D:u_square} and \ref{Fig3D:u_h} show the solution for the control $\uu$ of equation (\ref{eqn:Lagrange_cont}), 
$$ A_u \uu = \yom,$$ that is computed by the pcg scheme
for two different right hand sides $\yom$. 
We consider a grid size of $n = 127$ grid points in each dimension, regularization parameter $\gamma = 1$, 
and different values for $\alpha = 1, 1/2, 1/10$. \\

Each figure shows slice planes for the volumetric (tensor) data $\uu$, where the values in $\uu$ determine 
the contour colors. We choose three slice planes, where each of them is orthogonal to one dimension. 
Every slice plane could also be depicted analogously to the 2D figures in section \ref{ssec:numerics_tensor_2D}. 

\vspace*{-8mm}
\begin{figure}[H]
 	\subfigure[$\alpha = 1$]{\includegraphics[width=0.32\textwidth]{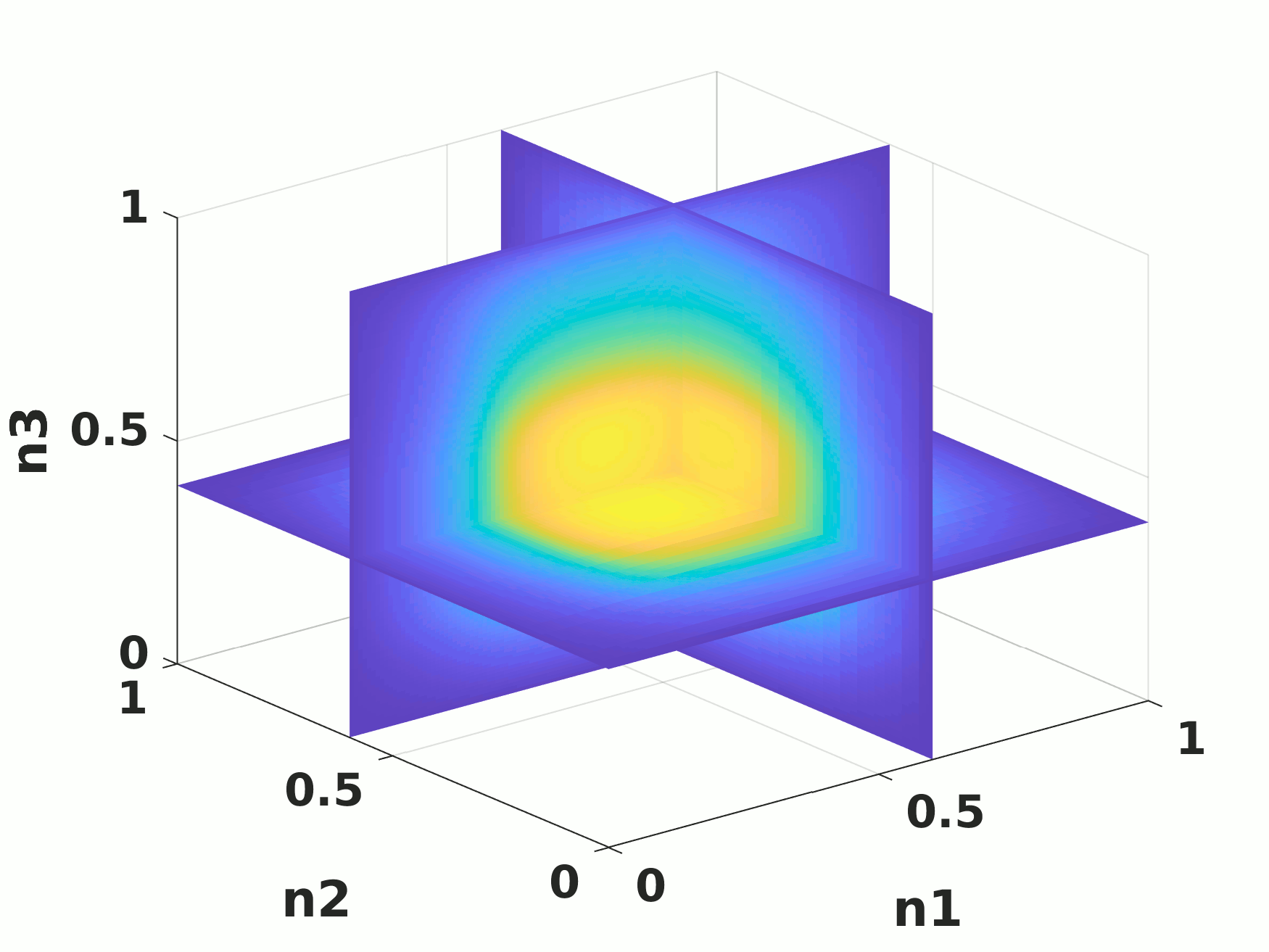}}
 	\subfigure[$\alpha = 1/2$]{\includegraphics[width=0.32\textwidth]{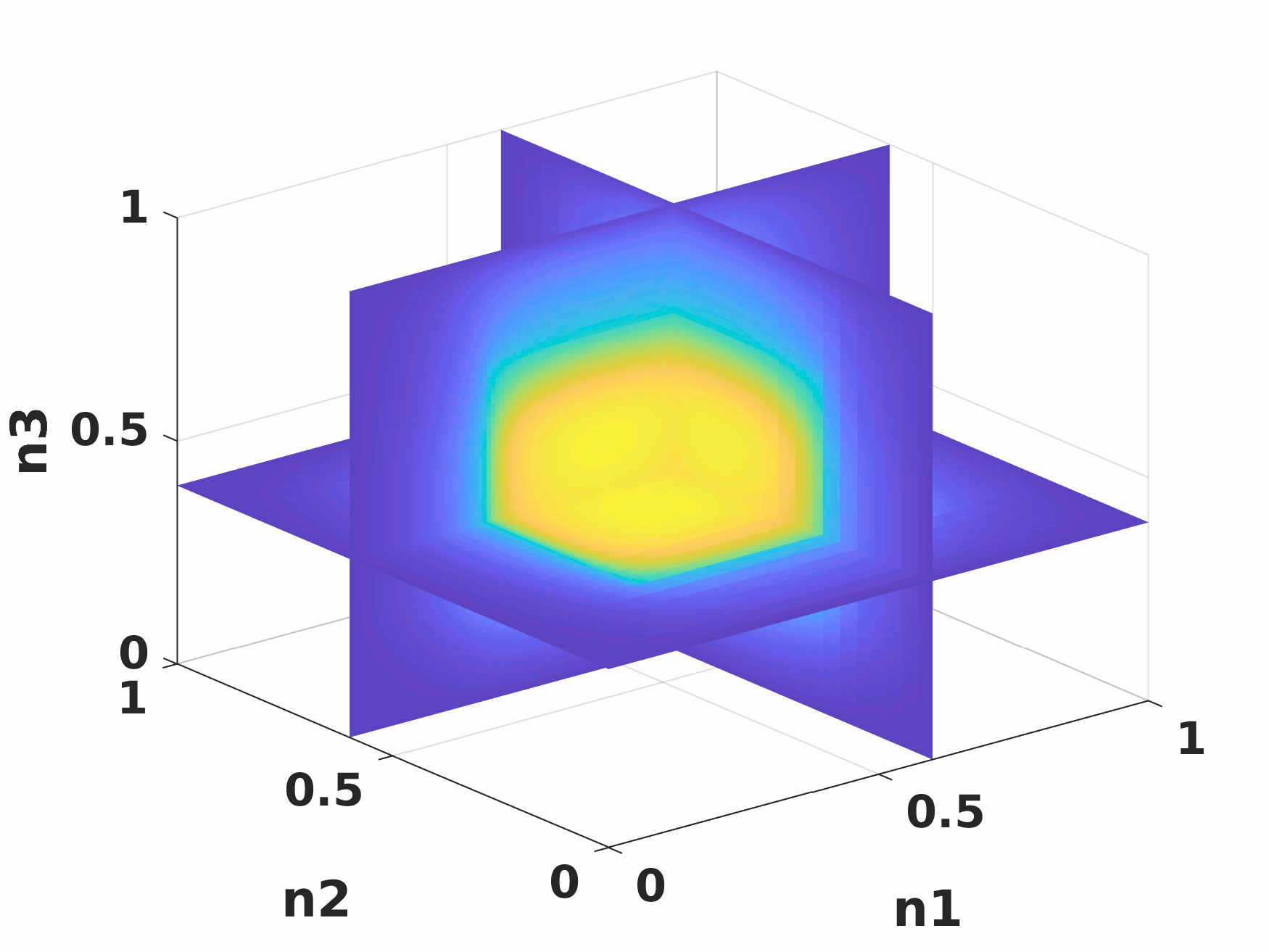}}
 	\subfigure[$\alpha = 1/10$]{\includegraphics[width=0.32\textwidth]{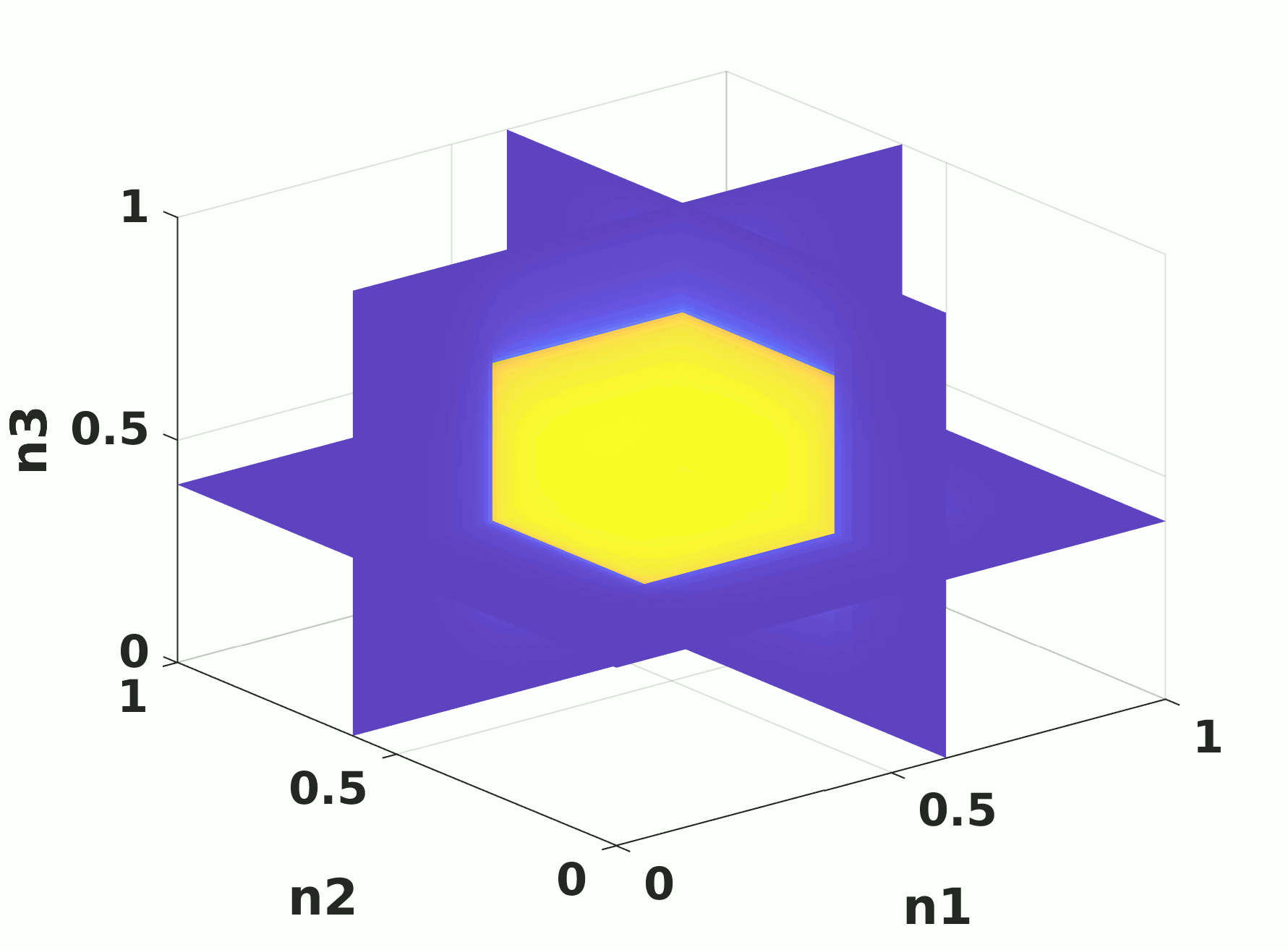}}
 	\caption[Test 2: Lösungen $u$ mit $\alpha = 1/10$]{\small Impact of fractional exponent 
 	$\alpha$ on solutions $\uu$ of (\ref{eqn:Lagrange_cont}) for box-type right hand side $\yom$ and $n = 127$ grid points in each dimension. }
 	\label{Fig3D:u_square}
 \end{figure} 
 
 \begin{figure}[H]
 	\subfigure[$\alpha = 1$]{\includegraphics[width=0.32\textwidth]{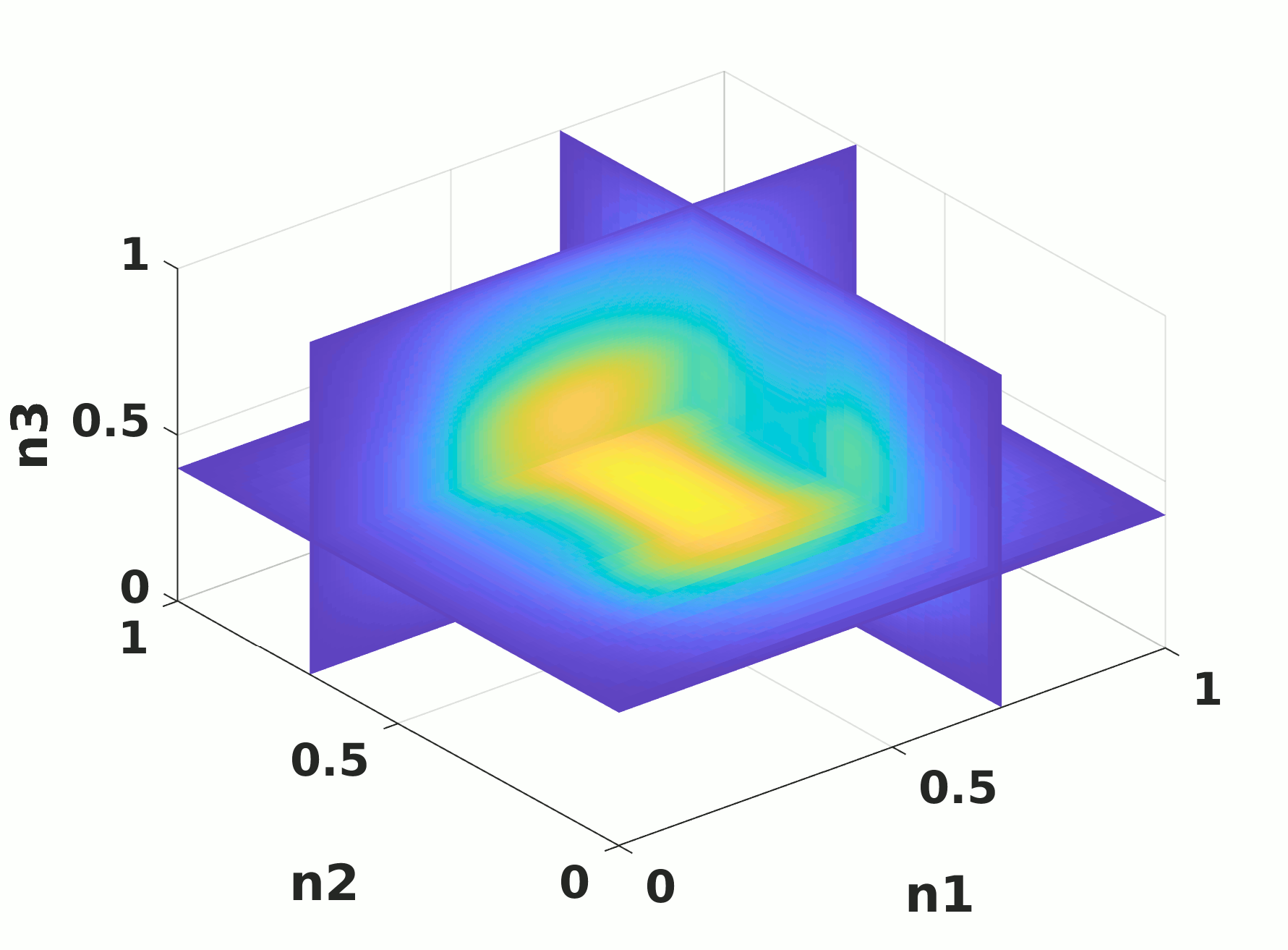}}
 	\subfigure[$\alpha = 1/2$]{\includegraphics[width=0.32\textwidth]{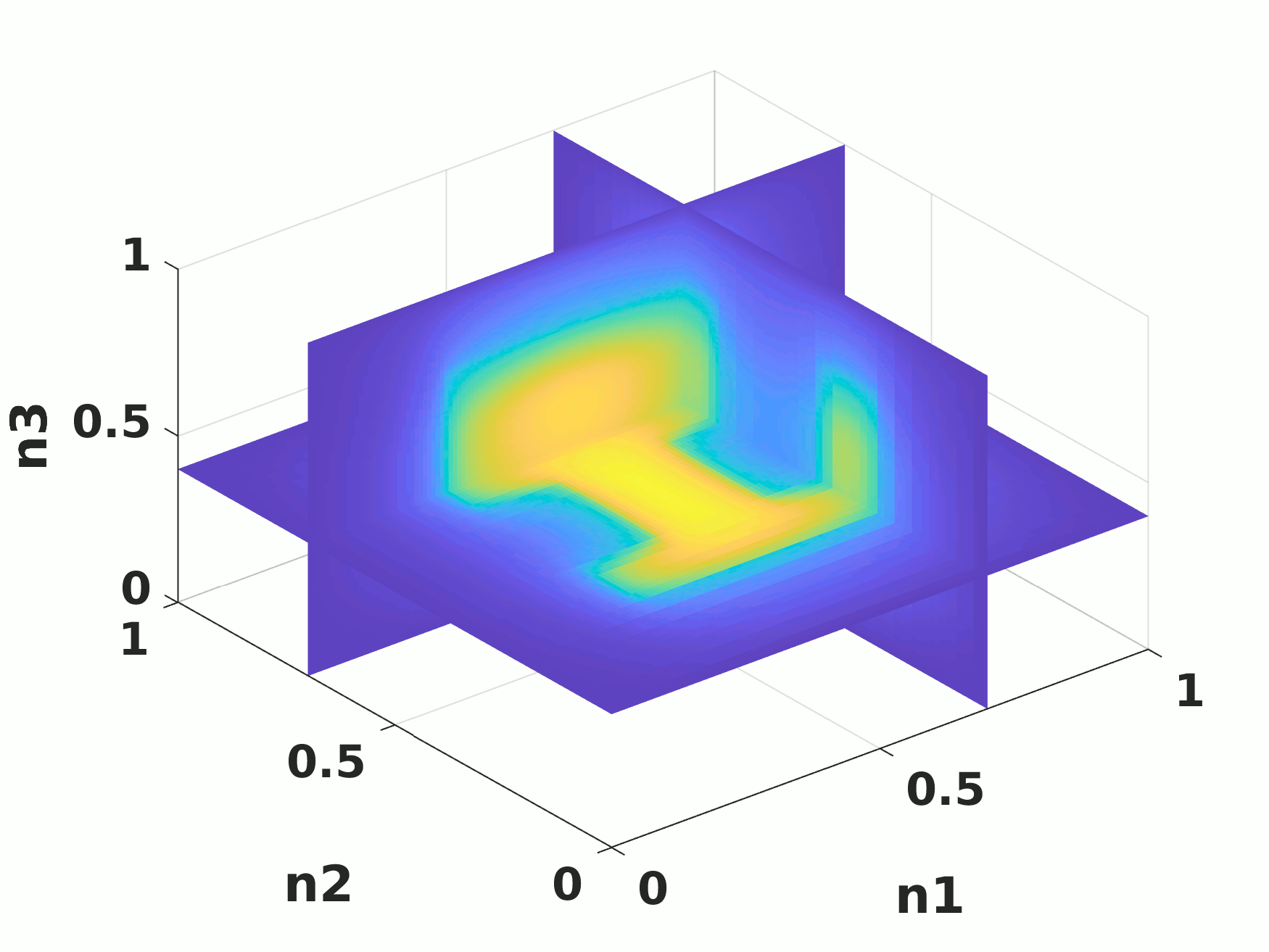}}
 	\subfigure[$\alpha = 1/10$]{\includegraphics[width=0.32\textwidth]{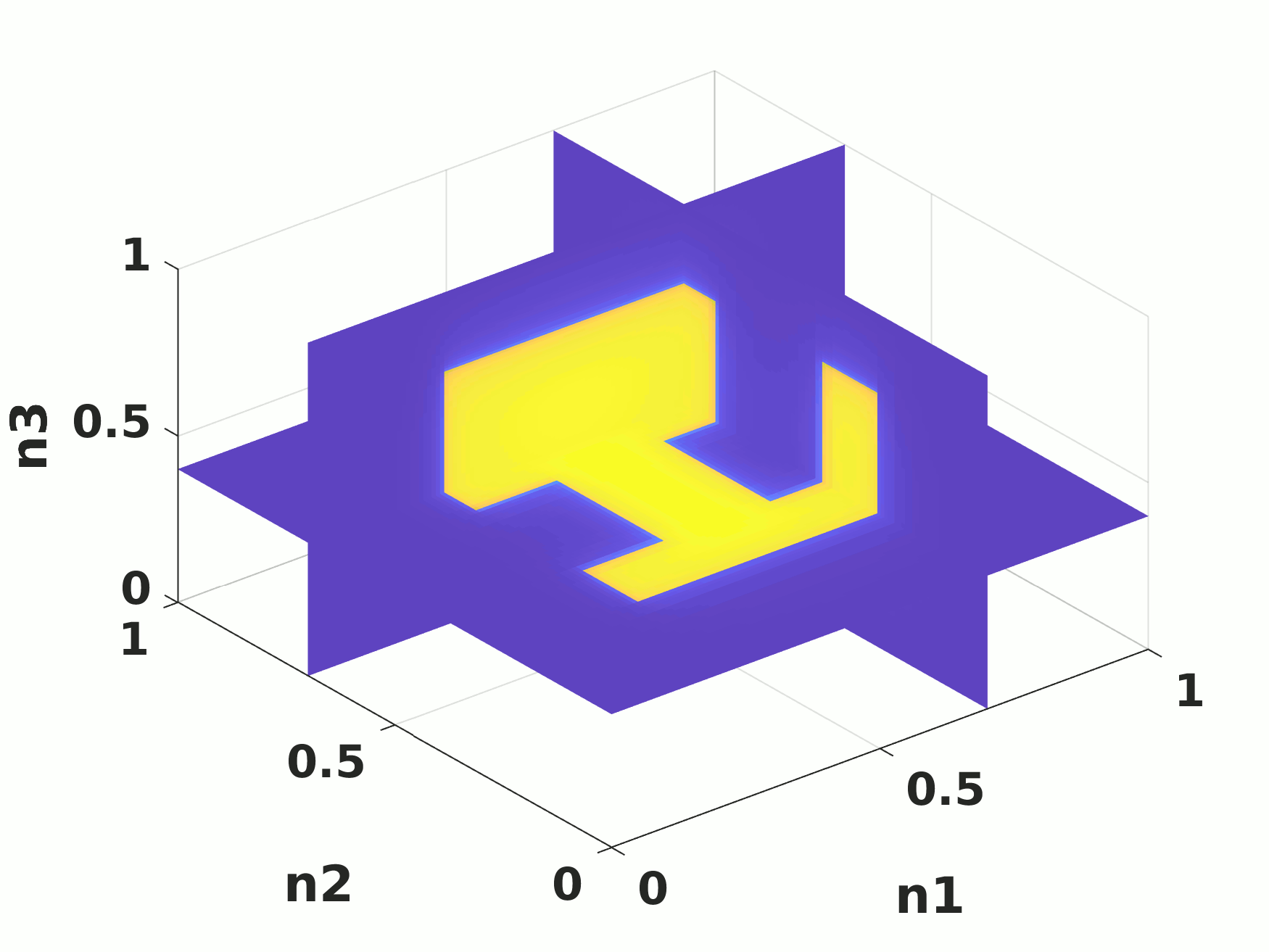}}
 	\caption[Test 2: Lösungen $u$ mit $\alpha = 1/10$]{\small Impact of fractional exponent $\alpha$ 
 	on solutions $\uu$ of (\ref{eqn:Lagrange_cont}) for $H$-type right hand side $\yom$ and $n = 127$ grid points in each dimension.}
 	\label{Fig3D:u_h}
 \end{figure} 

 Figure \ref{Fig3D:Volume_H} visualizes in volumetric style the same data as in figure \ref{Fig3D:u_h}
 for the case of $H$-typed right hand side $\yom$.
 
\begin{figure}[H]
\includegraphics[width=5.4cm]{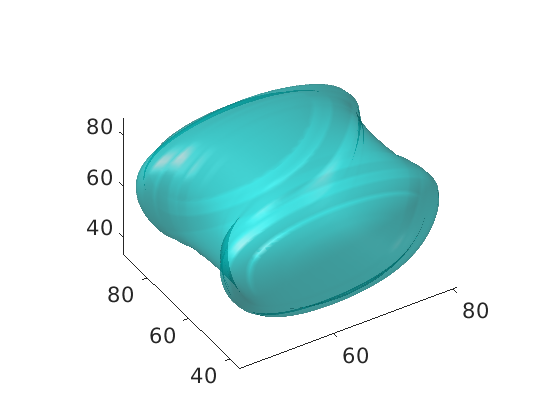}
 \includegraphics[width=5.4cm]{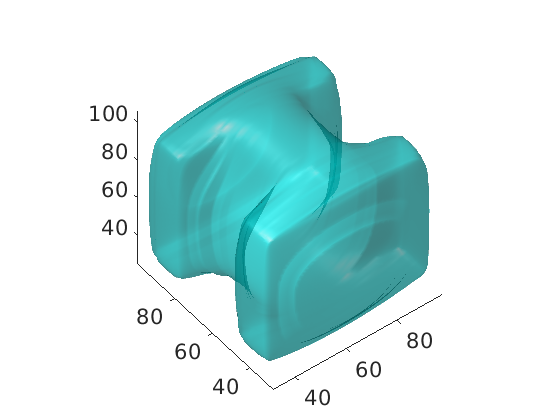}
 \includegraphics[width=5.4cm]{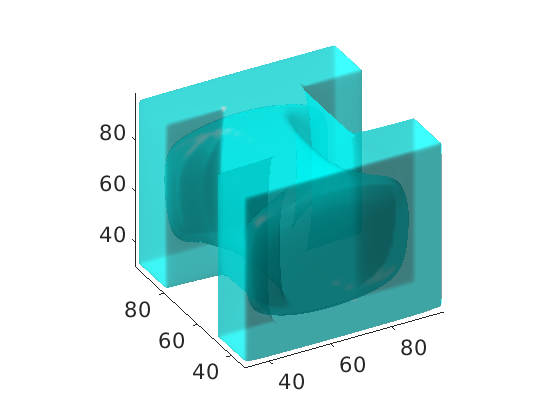}
  \caption{\small Volumetric visualization for the case of $H$-type: Impact of fractional exponent 
  $\alpha$ on solutions $\uu$ 
  of (\ref{eqn:Lagrange_cont})   for $H$-typed right hand side $\yom$, see Fig. \ref{Fig3D:u_h}.}
 \label{Fig3D:Volume_H}
\end{figure}

\subsubsection{Solutions for state variable}

Figures \ref{Fig3D:y_square} and \ref{Fig3D:y_h} represent the solution for the state $\y$ 
of (\ref{eqn:State}), 
 \[
  A^{-\alpha}\uu = \y,
 \]
where $\uu$ is the solution of (\ref{eqn:Lagrange_cont}) presented in section \ref{sssec:3DU}. 
Again we consider the grid size $n = 127$ and compare the effects of different fractional exponents $\alpha$.

\begin{figure}[H]
 	\subfigure[$\alpha = 1$]{\includegraphics[width=0.32\textwidth]{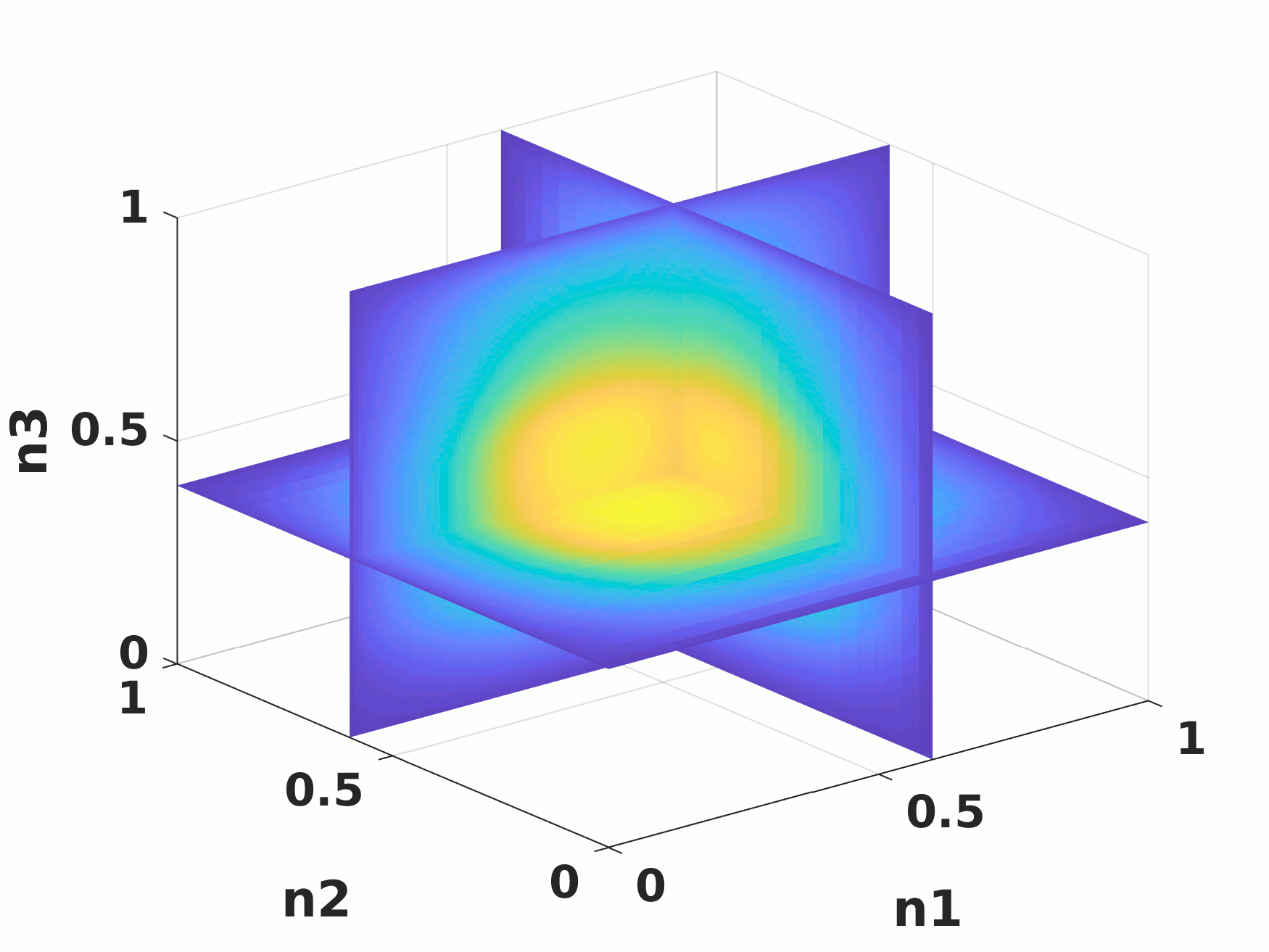}}
 	\subfigure[$\alpha = 1/2$]{\includegraphics[width=0.32\textwidth]{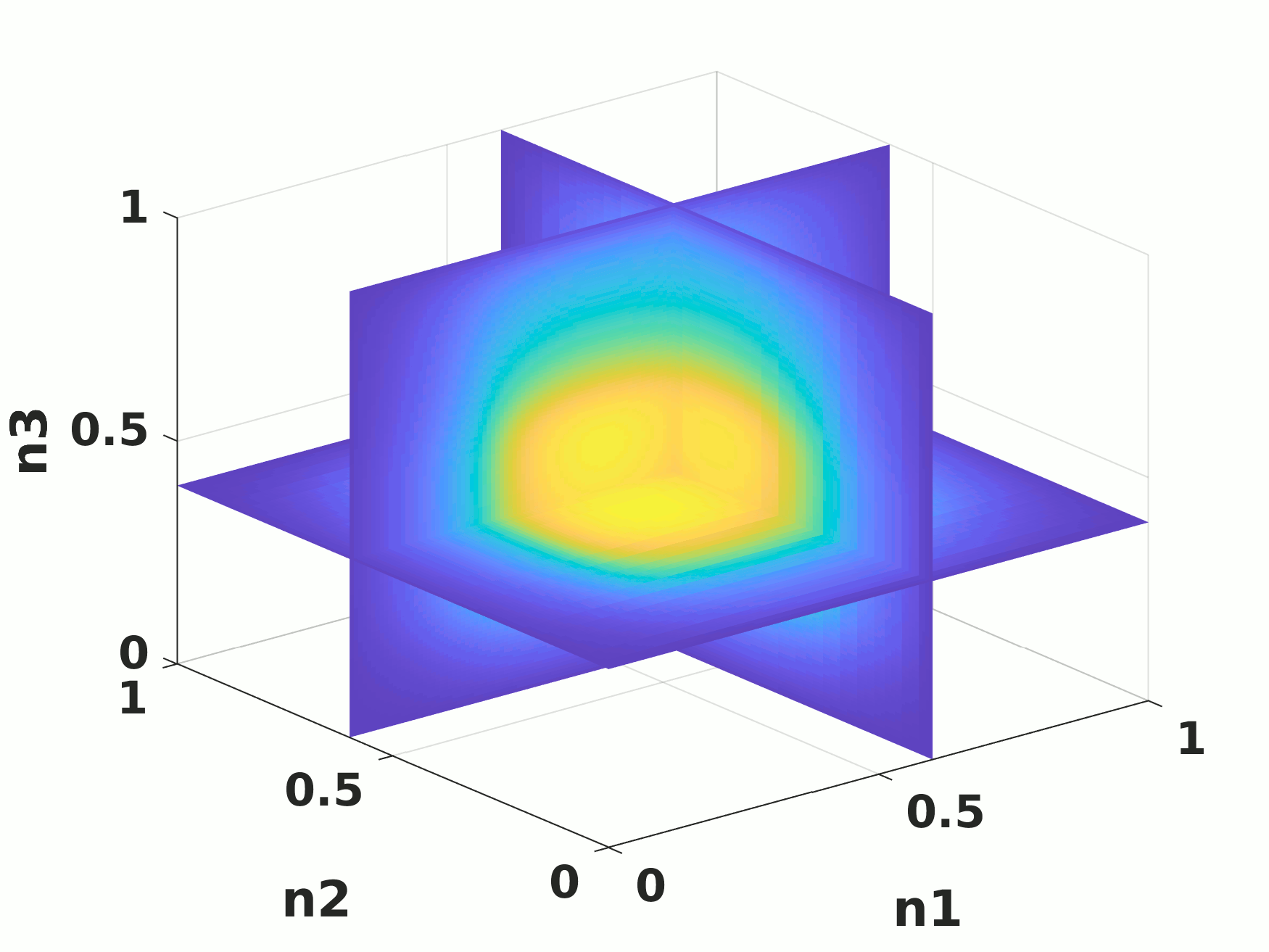}}
 	\subfigure[$\alpha = 1/10$]{\includegraphics[width=0.32\textwidth]{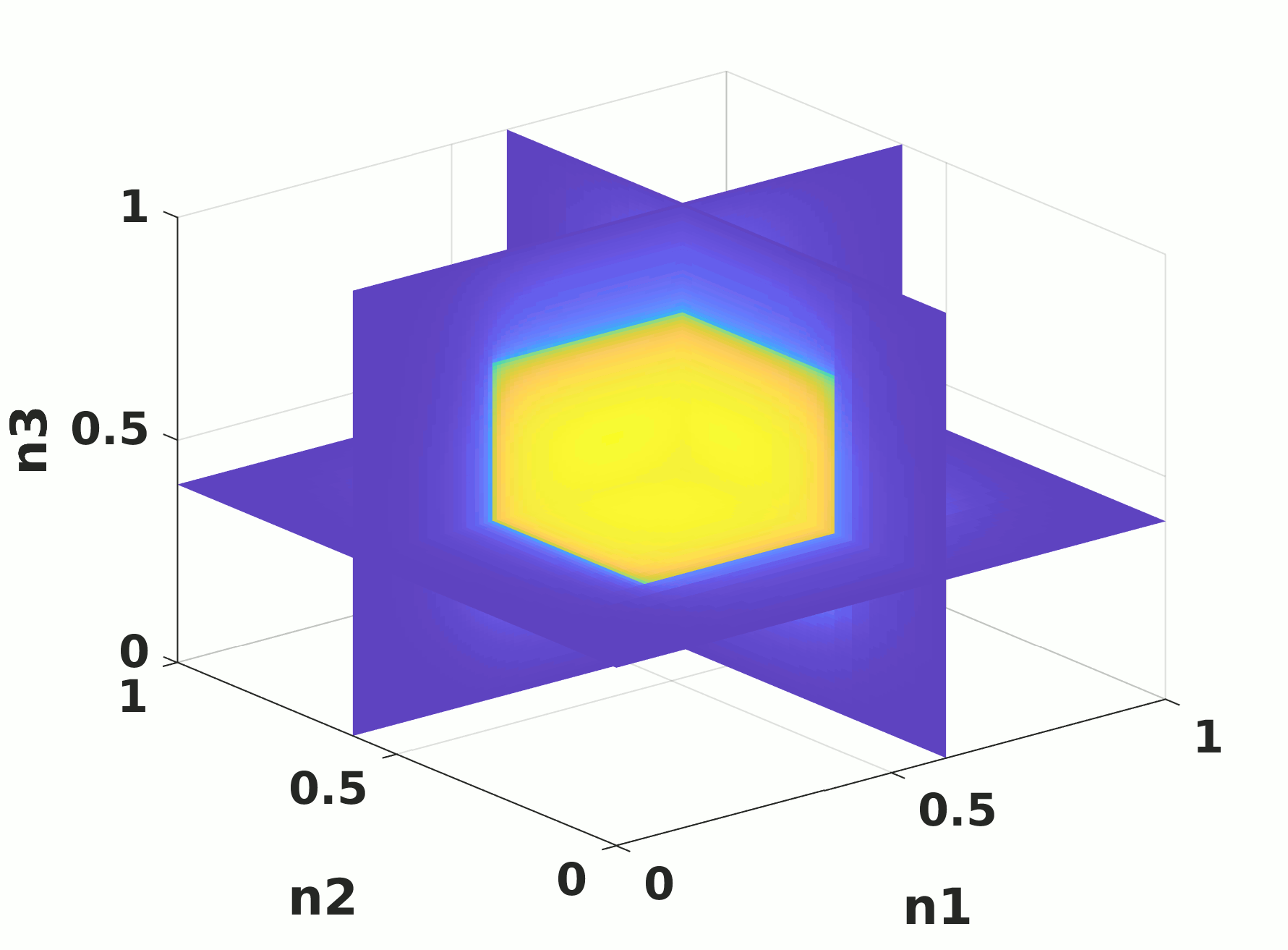}}
 	\caption[Test 2: Lösungen $\u$ mit $\alpha = 1/10$]{Impact of fractional exponent $\alpha$ on 
 	solutions $\y$ of (\ref{eqn:State}) for box-type right hand side $\yom$, $\gamma = 1$ and $n = 127$ grid points in each dimension. }
 	\label{Fig3D:y_square}
 \end{figure} 
 
 \begin{figure}[H]
 	\subfigure[Square]{\includegraphics[width=0.32\textwidth]{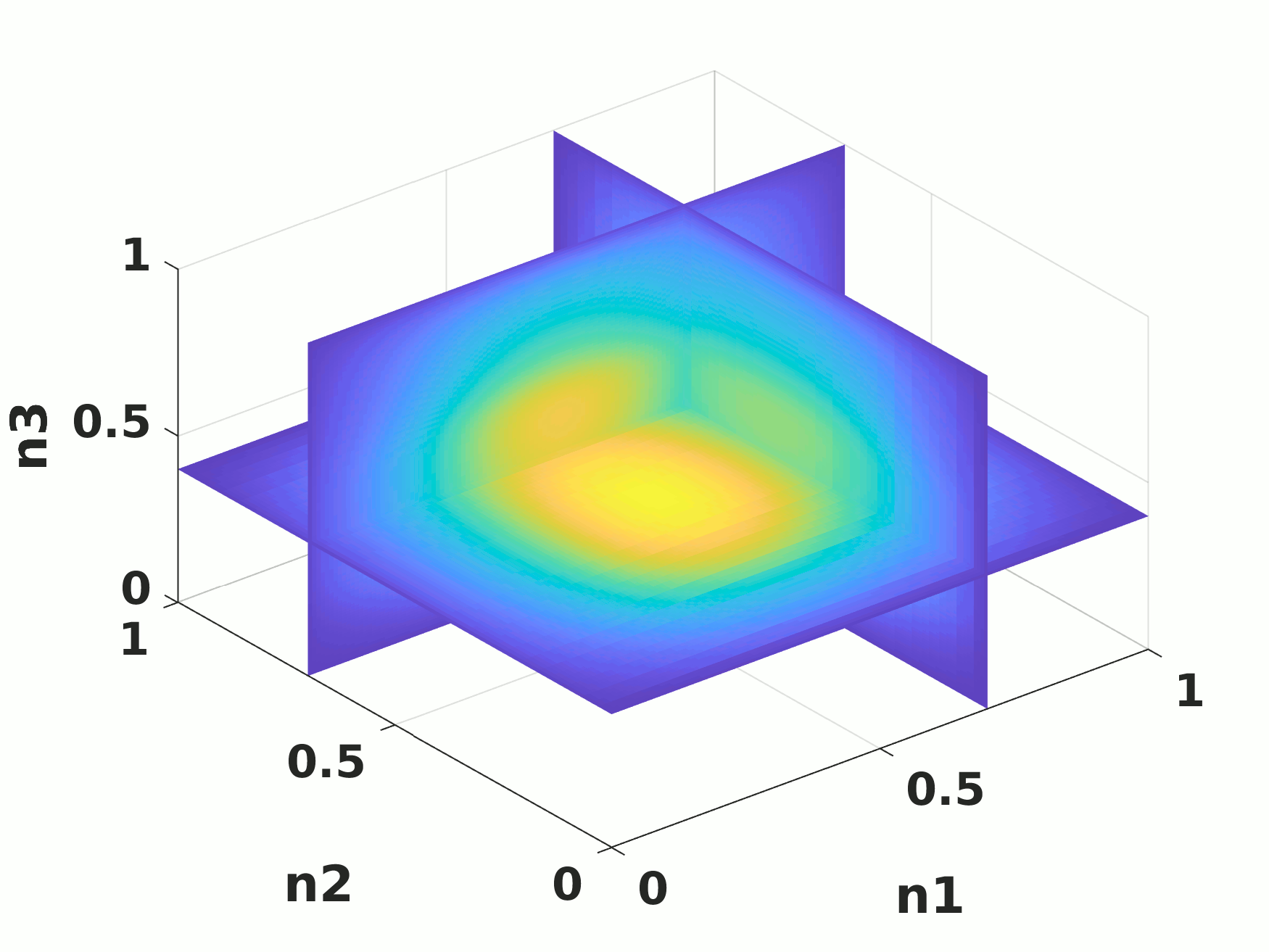}}
 	\subfigure[U-shaped]{\includegraphics[width=0.32\textwidth]{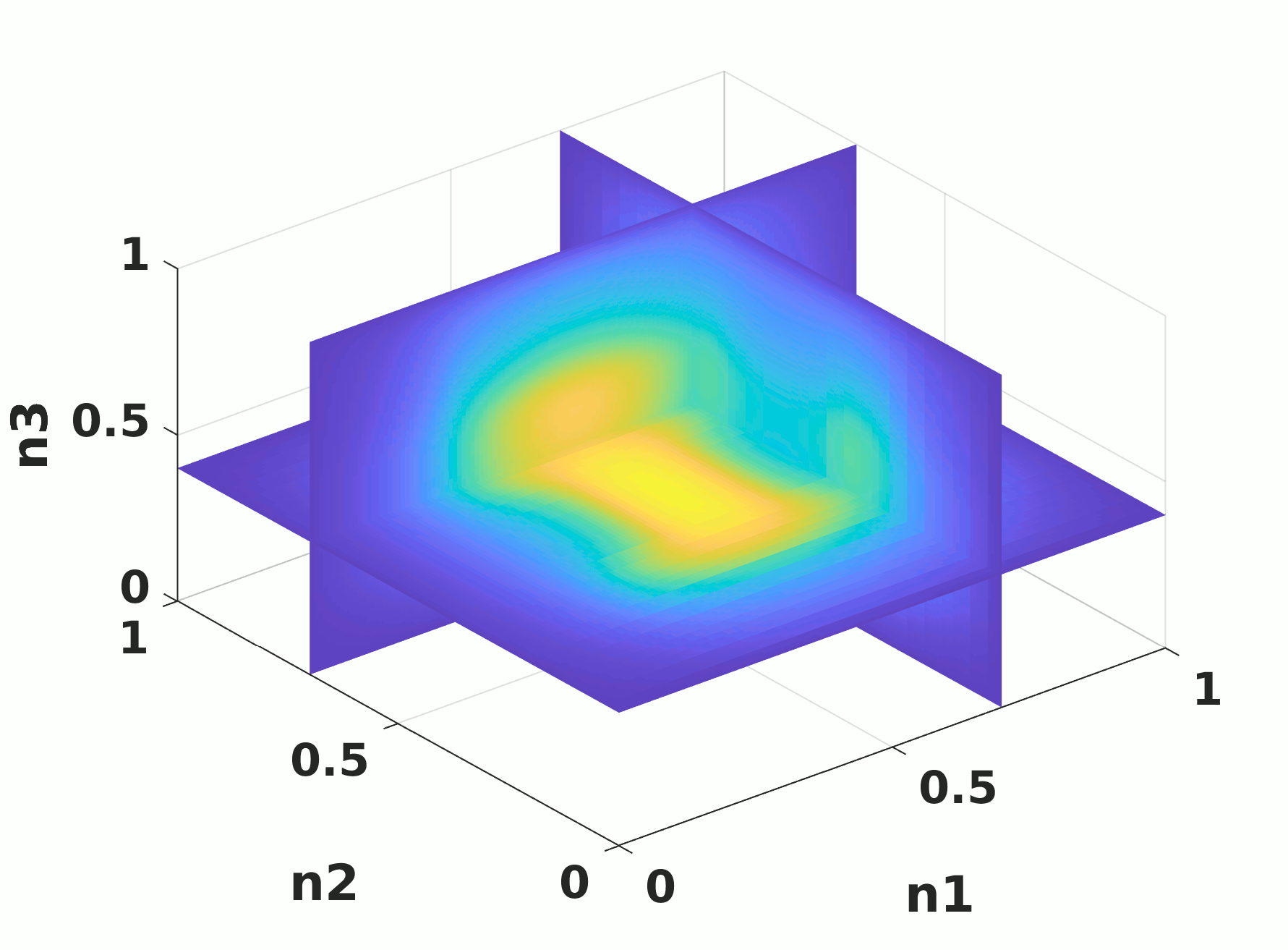}}
 	\subfigure[$H$-typed]{\includegraphics[width=0.32\textwidth]{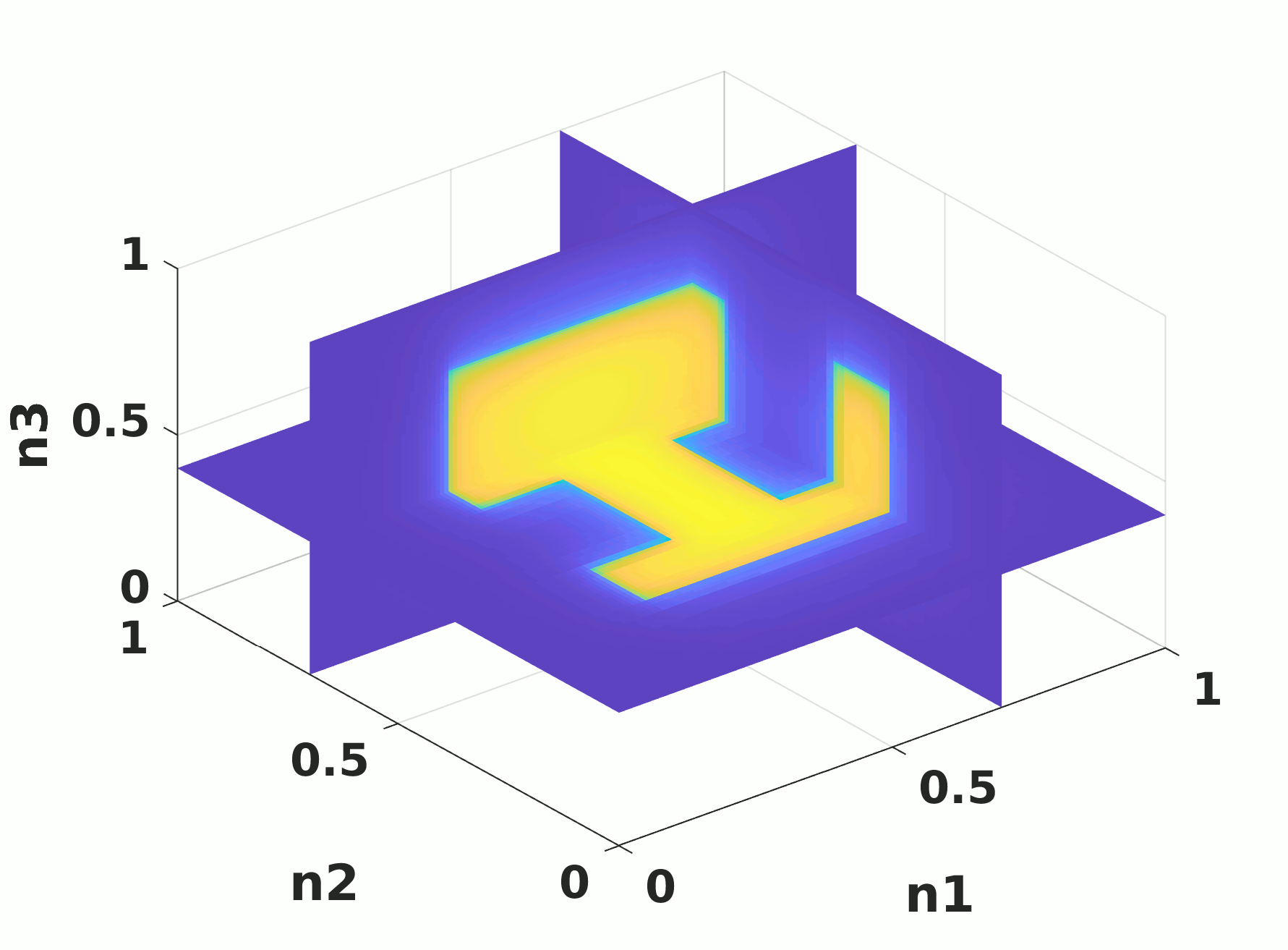}}
 	\caption[Test 2: Lösungen $u$ mit $\alpha = 1/10$]{Impact of fractional exponent $\alpha$ on 
 	solutions $\y$ of (\ref{eqn:State}) for $H$-typed right hand side $\yom$, $\gamma = 1$ and $n = 127$ grid points in each dimension. }
 	\label{Fig3D:y_h}
 \end{figure}


\section{Conclusions}
We have introduced and analyzed a tensor numerical pcg scheme  with adaptive rank truncation 
for the solution of optimal control problems 
constrained by  fractional 2D and 3D elliptic Laplacian-type operators with variable 
separable coefficients,  which essentially generalizes the results in \cite{HKKS:18}. 
To that end, we first have provided the  theoretical solution setting by 
exploiting the separable structure of the involved functions of an elliptic operator 
and by deriving the corresponding finite difference discretization scheme on  
$n\times n \times n$ Cartesian grids. 
With the help of the eigenvalue decomposition of the 
one-dimensional differential operators and using the efficient Tucker-to-canonical 
tensor approximation techniques, we have managed to present the  fully populated stiffness  
matrix in factorized low-rank format. 
For preconditioning, we have extended  the low-rank structures for the classic Laplace operator 
developed in \cite{HKKS:18} to the case of anisotropic Laplacian in 3D. 

As a result, we achieved a dimensionally independent quadratic complexity scaling in the number of 
univariate grid points, $O(RSn^2)$, for the (factorized) matrix-vector multiplication that also
represents an upper bound for the complexity of the PCG solution scheme with rank truncation as a whole.
 Here $R$ and $S$ are, respectively, the (low) Kronecker ranks of the nonlocal solution operator (matrix) and 
the iterated unknown solution vector. 

In our numerical study, we have verified the efficiency of our solution scheme over 
rank structured ``data manifold'' by comparison 
to the Matlab intern pcg routine and justified the existence of low-rank representations of 
the involved operators by investigating the decay of the respective singular values. 
Furthermore, in our numerical tests we have verified the computational cost of $O(RSn^2)$ for $d=2, 3$ 
and have pointed out that the required memory capacity for solving the problem for large grids 
can only be handled by using the proposed low-rank method. 

The presented approach can also be applied to the simpler case of control problems  constraint
via both classic and fractional Laplace operator. 
In order to reduce the numerical complexity further, the application of quantized tensor train 
formats to the involved operators and vectors may be examined. What is more, the effects of considering 
fractional operators with general rank-$R$ separable coefficients may be investigated and analyzed.

 \section*{Appendix 1: Short sketch on tensor numerical methods }
\label{sec:low-rank_tensor}

Recent tensor numerical methods emerged   as bridging  of the basic tensor decompositions 
and algorithms of the multilinear algebra with the rigorous results in approximation theory on the low-rank
representation of the multivariate functions an operators \cite{GHK:05,HaKhtens:04I}. 
The latter results on tensor-product approximation to multi-dimensional nonlocal operators have been   
first originated in the framework of the low-rank ${\cal H}$-matrix techniques  \cite{Hack_Book:09}.

The basic tensor decompositions used in multilinear algebra for the low-rank representation 
of the multidimensional tensors 
are the canonical \cite{Hitch:1927} and Tucker \cite{Tuck:1966} tensor formats. 

For a tensor of order $d$ given in a full size format  
\begin{equation*}
 \label{Tensor_def}
{\bf T}=[t_{i_1,\ldots,i_d}]  \in 
\mathbb{R}^{n_1 \times \ldots \times n_d}\;
\mbox{  with } \quad i_\ell\in I_\ell:=\{1,\ldots,n_\ell\}.
\end{equation*}
all operations scale exponentially with the dimension size, as $O(n^d)$ (assuming $n_\ell =n$).
 The so-called ``curse of dimensionality'' can be reduced or eliminated when the tensor is 
 given in a rank-structured representation. 
 A tensor in the $R$-term canonical format is defined by a sum of rank-$1$ tensors 
 \begin{equation}\label{eqn:CP_form}
   {\bf T} = {\sum}_{k =1}^{R} \xi_k
   {\bf u}_k^{(1)}  \otimes \ldots \otimes {\bf u}_k^{(d)},  \quad  \xi_k \in \mathbb{R},
\end{equation}
where ${\bf u}_k^{(\ell)}\in \mathbb{R}^{n_\ell}$ are normalized vectors, 
and $R$ is the canonical rank. The storage cost of this
parametrization is bounded by  $d R n$. However, for $d\geq 3$, there is lack of stable 
algorithms to compute the 
canonical low-rank representation of a general tensor ${\bf T}$, that is, with
the minimal number $R$ in representation (\ref{eqn:CP_form}), and the respective
decomposition with the polynomial cost in $d$, i.e., the computation of the
canonical decomposition is in general an $N$-$P$ hard problem.

The Tucker tensor format is suitable for stable numerical decompositions with a fixed
truncation threshold.
We say that the tensor ${\bf T} $ is represented in the rank-$\bf r$ orthogonal Tucker format 
with the rank parameter ${\bf r}=(r_1,\ldots,r_d)$ if 
\begin{equation*}
\label{eqn:Tucker_form}
  {\bf T}  =\sum\limits_{\nu_1 =1}^{r_1}\ldots
\sum\limits^{r_d}_{{\nu_d}=1}  \beta_{\nu_1, \ldots ,\nu_d}
\,  {\bf v}^{(1)}_{\nu_1} \otimes  
{\bf v}^{(2)}_{\nu_2} \ldots \otimes {\bf v}^{(d)}_{\nu_d}, 
\end{equation*}
where $\{{\bf v}^{(\ell)}_{\nu_\ell}\}_{\nu_\ell=1}^{r_\ell}\in \mathbb{R}^{n_\ell}$, $\ell=1,\ldots,d$
represents a set of orthonormal vectors 
and $\boldsymbol{\beta}=[\beta_{\nu_1,\ldots,\nu_d}] \in \mathbb{R}^{r_1\times \cdots \times r_d}$ is 
the Tucker core tensor. 
However, the complexity of the Tucker tensor decomposition algorithm \cite{DMV-SIAM2:00} is
$O(n^{d+1})$ and it requires the tensor in full size format. This step is called  
the higher order singular value decomposition (HOSVD).  

These tensor decompositions yield rank-structured representation of tensors
(for $d=2$ these are rank-structured matrices), which provide the reduction of the operations
with tensor to one-dimensional operations. Given two tensors in the canonical tensor format,
\[
{\bf A}_1 =\sum_{k =1}^{R_1} c_{k}
{\bf u}^{(1)}_{k}  \otimes \ldots \otimes {\bf u}^{(d)}_{k},\quad
{\bf A}_2 =\sum_{m =1}^{R_2} b_{m}
{\bf v}^{(1)}_{m}  \otimes \ldots \otimes {\bf v}^{(d)}_{m}.
\]
 the  Euclidean scalar product   
\[
\langle {\bf A}_1, {\bf A}_2 \rangle:=
\sum\limits_{k=1}^{R_1}
\sum\limits_{m=1}^{R_2}
c_k b_m  \prod\limits_{\ell=1}^d
\left\langle {\bf u}^{(\ell)}_k,  {\bf v}^{(\ell)}_m \right\rangle,
\] 
 and Hadamard product of tensors $A_1$, $A_2$, 
\[
{\bf A}_1 \odot {\bf A}_2    :=
\sum\limits_{k=1}^{R_1} \sum\limits_{m=1}^{R_2}
c_k b_m \left(  {\bf u}_k^{(1)} \odot  {\bf v}_m^{(1)} \right) \otimes \ldots \otimes
\left(  {\bf u}_k^{(d)} \odot  {\bf v}_m^{(d)} \right).
\]
are computed in $O(d R_1 R_2 n)$ complexity.
The rank of the resulting tensor  is a product of the original ranks of  tensors.

In multilinear algebra the Tucker tensor decomposition was used in chemometrics, 
psychometrics, and signal processing for the quantitative analysis of the experimental data,
without special demands on accuracy and data size.
These techniques could not be applied for the  usage in numerical analysis of PDEs,
with large data arrays and high accuracy requirements.
Also, for general type tensors given in the rank-$R$ canonical format, 
with large ranks and with large  mode size $n$, both construction of the full size tensor 
representation and HOSVD become  intractable. 

However, it was found  in \cite{Khor1:06} that for function related tensors the Tucker tensor 
decomposition  exhibits exceptional approximation properties. 
In particular, it was proven and demonstrated numerically  that 
for a class of higher order tensors arising from the discretization of linear operators and 
functions in $\mathbb{R}^d$ using $n\times n  \ldots \times n$ Cartesian grids
the approximation error of the Tucker decomposition decays exponentially in the Tucker 
rank \cite{Khor1:06,KhKh:06}. These findings motivated introducing the multigrid Tucker tensor  
decomposition \cite{KhKh3:08} which is used in this paper to transform the reshaped 
discretized elliptic operators, being the fully populated 3D tensors,
into the low-rank canonical tensors. 
The main advantage of the multigrid Tucker decomposition is the elimination of the 
HOSVD for large grids, thus reducing the required storage to the maximum size of the tensor,
$O(n^d)$, instead of $O(n^{d+1})$. 
The last step in the transformation of the full tensor to canonical format is performed 
by using the Tucker-to-canonical algorithm \cite{Khor2Book2:18}.

Main motivation for tensor numerical methods in scientific computing
was the invention of the canonical-to-Tucker (C2T) decomposition and the reduced HOSVD (RHOSVD)
(introduced in \cite{KhKh3:08}, see detailed description in \cite{Khor2Book2:18,KhorBook:18})
which does not require the construction of a full size tensor. 
The complexity of the RHOSVD is  $O(d n^2 R)$, where $R$ is the canonical rank, 
and it applies to any dimension size $d$.

Tensor operations with canonical tensors, provide an advantage of
one-dimensional complexity of $d$-dimensional operations.
However, these operations lead to ``curse of ranks'', since the ranks are multiplied and 
after several operations calculations become intractable.
The C2T transform provides a robust tool for the rank reduction
of the canonical tensors. The combination of C2T algorithm with the Tucker-to-canonical 
transform\footnote{This decomposition applies to a small size core tensor 
for the mixed Tucker-canonical format of type  
\[
 {\bf T}_{({\bf r})}=\left(\sum\limits_{k=1}^R b_k {\bf u}_k^{(1)} 
\otimes\ldots\otimes {\bf u}_k^{(d)} \right)
\times_1 V^{(1)}\times_2 V^{(2)} \times_3 \ldots 
   \times_d V^{(d)},  
\]
providing the canonical tensor rank of the order of $R\leq r^2$ in 3D case \cite{Khor2Book2:18}.}    
is the main working horse in all rank-truncation procedures.
In this paper, these transforms are used to reduce the ranks of the involved
quantities in the course of PCG iteration for the numerical solution of the 3D control problems
with fractional elliptic operators in constraints.

 \section*{Appendix 2: Precoditioned CG iteration in low-rank tensor formats }
\label{sec:low-rank_PCG}

\begin{algorithm}
	\caption{Preconditioned CG method in low-rank format} \label{alg:pcg}
	\begin{algorithmic}[1]
		\Require{Rank truncation procedure $\mathtt{trunc}$, rank tolerance parameter $\varepsilon$,
		linear function in low-rank format $\mathtt{fun}$, preconditioner in low-rank format
		$\mathtt{precond}$, right-hand side tensor $\mathbf{B}$, initial guess $\mathbf{X}^{(0)}$,
		maximal iteration number $k_{\max}$}
			\State $\mathbf{R}^{(0)} \leftarrow \mathbf{B} - \mathtt{fun}(\mathbf{X}^{(0)})$
			\State $\mathbf{Z}^{(0)} \leftarrow \mathtt{precond}(\mathbf{R}^{(0)})$
			\State $\mathbf{Z}^{(0)} \leftarrow \mathtt{trunc}(\mathbf{Z}^{(0)},\varepsilon)$
			\State $\mathbf{P}^{(0)} \leftarrow (\mathbf{Z}^{(0)})$
			\State $k \leftarrow 0$
			\Repeat
				\State $\mathbf{S}^{(k)} \leftarrow
				\mathtt{fun}(\mathbf{P}^{(k)})$
				\State {\color{red}$\mathbf{S}^{(k)} \leftarrow
				\mathtt{trunc}(\mathbf{S}^{(k)},\varepsilon)$}
				\State $\alpha_k \leftarrow
				\frac{\dotprod{\mathbf{R}^{(k)}}{\mathbf{Z}^{(k)}}}
				{\dotprod{\mathbf{P}^{(k)}}{\mathbf{S}^{(k)}}}$
				\State $\mathbf{X}^{(k+1)} \leftarrow
				\mathbf{X}^{(k)} + \alpha_k \mathbf{P}^{(k)}$
				\State {\color{red}$\mathbf{X}^{(k+1)} \leftarrow
				\mathtt{trunc}(\mathbf{X}^{(k+1)},\varepsilon)$}
				\State $\mathbf{R}^{(k+1)} \leftarrow
				\mathbf{R}^{(k)} - \alpha_k \mathbf{S}^{(k)}$
				\State {\color{red}$\mathbf{R}^{(k+1)} \leftarrow
				\mathtt{trunc}(\mathbf{R}^{(k+1)},\varepsilon)$}
				\If {$\mathbf{R}^{(k+1)}$ is sufficiently small}
					\State \Return $\mathbf{X}^{(k+1)}$
					\State \textbf{break}
				\EndIf
				\State $\mathbf{Z}^{(k+1)} \leftarrow
				\mathtt{precond}(\mathbf{R}^{(k+1)})$
				\State {\color{red}$\mathbf{Z}^{(k+1)} \leftarrow
				\mathtt{trunc}(\mathbf{Z}^{(k+1)},\varepsilon)$}
				\State $\beta_k \leftarrow
				\frac{\dotprod{\mathbf{R}^{(k+1)}}{\mathbf{Z}^{(k+1)}}}
				{\dotprod{\mathbf{Z}^{(k)}}{\mathbf{R}^{(k)}}}$
				\State $\mathbf{P}^{(k+1)} \leftarrow
				\mathbf{Z}^{(k+1)} + \beta_k \mathbf{P}^{(k)}$
				\State {\color{red} $\mathbf{P}^{(k+1)} \leftarrow
				\mathtt{trunc}(\mathbf{P}^{(k+1)},\varepsilon)$}
				\State $k \leftarrow k+1$
			\Until{$k = k_{\max}$}
			\Ensure{Solution $\mathbf{X}$ of $\mathtt{fun}(\mathbf{X})=\mathbf{B}$}
	\end{algorithmic}
\end{algorithm}
As the rank truncation procedure,
in our implementation we apply the reduced SVD algorithm in 2D case and the RHOSVD based 
canonical-to-Tucker-to-canonical algorithm (see \cite{KhKh3:08}) 
as described in Section \ref{sec:low-rank_tensor}.

%
%
 
 \section*{Acknowledgment}
This research has been supported by the German Research Foundation (DFG) within 
the \textit{Research Training Group 2126: Algorithmic Optimization}, Department of Mathematics, University of Trier, Germany.

\begin{footnotesize}

\end{footnotesize}

%

\end{document}